\documentclass[10pt]{amsart}

\usepackage[utf8]{inputenc}
\usepackage[T1]{fontenc}
\usepackage[pdftex]{graphicx}
\usepackage{latexsym}
\usepackage{amsmath,amssymb,amsthm}
\usepackage{mathtools}
\usepackage{enumitem}
\usepackage{faktor}
\usepackage{enumerate}
\usepackage{hyperref}
\usepackage{tikz}
\usepackage{xcolor}

\newtheorem{theorem}{Theorem}[section]
\newtheorem{corollary}[theorem]{Corollary}
\newtheorem{proposition}[theorem]{Proposition}
\newtheorem{lemma}[theorem]{Lemma}
\theoremstyle{definition}

\newtheorem*{remark}{Remark}
\newtheorem{example}[theorem]{Example}

\newcommand{\F}{{F}}
\newcommand{\red}{\color{red}}
\newcommand{\Q}{\mathcal{Q}}

\newcommand{\Hmm}[1]{\leavevmode{\marginpar{\tiny%
			$\hbox to 0mm{\hspace*{-0.5mm}$\leftarrow$\hss}%
			\vcenter{\vrule depth 0.1mm height 0.1mm width \the\marginparwidth}%
			\hbox to 0mm{\hss$\rightarrow$\hspace*{-0.5mm}}$\\\relax\raggedright #1}}}
	
	\newcommand{\eat}[1]{}

\begin{document}
	\author[P. Bartmann, M. Keller]{Philipp Bartmann, Matthias Keller}
	
	\address{Philipp Bartmann: Institut f\"ur Mathematik, Universit\"at Potsdam
		14476  Potsdam, Germany}

        \email{philipp.bartmann@uni-potsdam.de}
    \address{Matthias Keller: Israel Institute of Advanced Studies (IIAS) The Hebrew University of Jerusalem, Feldman bldg. Edmond J. Safra Campus, Givat Ram, Jerusalem, Israel;
    Institut f\"ur Mathematik, Universit\"at Potsdam
		14476  Potsdam, Germany}
	\email{matthias.keller@uni-potsdam.de}

	\title[]{The Heat equation and independence of the spectrum of the Hodge Laplacian on $\ell^p$}

\begin{abstract}
We study the heat equation associated to the Hodge Laplacian on simplicial complexes. Using recently developed techniques for magnetic Schrödinger operators, we prove Davies-Gaffney-Grigoryan type estimates for the kernel of the heat semigroup on $\ell^2,$ which we then use to extend the  semigroup to $\ell^p$ for $p\in[1,\infty]$ under suitable curvature and volume growth conditions. Furthermore, we establish $p$-independence of the Hodge Laplacian spectrum under the assumption of form bounded curvature and uniform subexponential volume growth. While the main focus of the paper is the Hodge Laplacian on simplicial complexes, the results are indeed proven for general positive magnetic Schrödinger operators on graphs.
  
\end{abstract}
\maketitle
\tableofcontents

\section{Introduction}
In Riemannian geometry, the Hodge Laplacian is most famously known for its role in the celebrated Hodge theorem, which identifies its kernel with the de Rham cohomology of compact manifolds. In addition, its spectrum encodes geometric information, and so in the past century it has been extensively studied and defined the field of spectral geometry. In recent years there has been a growing interest in Hodge theory for various discrete spaces \cite{C,F,GrigPath,HJ,jost2026spectra,MHJ} going back to the seminal work of Eckmann \cite{Eckmann1944}. One natural way to understand how geometry interacts with the analytic properties of the Hodge Laplacian is through the associated heat equation, by examining how heat evolves and diffuses over time on a given space. The $L^2$-theory of the heat equation is classical  and well-understood via the spectral theorem both in the continuous \cite{Chavel,Dodziuk,Do83, DM97,deRham,Strichartz} and the discrete setting for graphs \cite{Barlow,BHY1,KLW,Grigoryanbook,Wojc}, see also recent work on simplicial complexes and hypergraphs \cite{HL,MugnoloHyper,MKBJ}. Especially, the works of Dodziuk bridged the gap between the continuum and the discrete setting, see e.g. \cite{Dodziuk,DM98}. For manifolds there is  a substantial body of literature on more general $L^p$-spaces \cite{CC,Ch,ChL,Magniez,Strichartz} as well, which reveal even more interesting and subtle phenomena such as $L^p$-cohomology, $L^p$-Hodge decompositions and boundedness of Riesz-transforms in $L^p$ as well as their connection to the heat semigroup. In particular, the behavior of spectra of semigroup generators in $L^p$ and their dependence on $p$ has attracted considerable attention for Laplace and Schrödinger-type operators both on functions \cite{CKK,HempelVoigt1986,HempelVoigt1987, Sturm} and forms as well \cite{Ch}. In \cite{Braun} the theory is generalized to a broader class of metric spaces.\\
For discrete spaces such as simplicial complexes, the $\ell^2$-theory has been adapted in many cases and many results from the continuous setting admit discrete analogues. However, this was mostly done for finite simplicial complexes \cite{Dodziuk,HJ} and only recently people became interested in the infinite setting as well,\cite{ATH,C,ennaceur_jadlaoui_hodge,ennaceur_jadlaoui_geometric,HL,Masamune}. In particular the $\ell^p$-theory seems to be  largely unexplored beyond the graph setting \cite{BauerHuaKeller,CHQSZ,CCH}. An notable exception here is \cite{JM} which, however, considers  also  finite complexes only.\\
In this paper, we take a step towards filling this gap by exploring the $\ell^p$-theory of the heat semigroup as well as the spectral theory of the $\ell^p$-generators and aim to lay the groundwork for further developments. To do so, we first establish a Davies-Gaffney-Grigoryan type estimate for the kernel of the heat semigroup. This generalizes the result for bounded Laplacians in \cite{HL} and answers the question posed therein, concerning unbounded operators. We will then use these estimates to extend the semigroup from $\ell^2$ to $\ell^p$, $1\leq p\leq\infty,$ under appropriate volume growth assumptions with respect to an intrinsic metric. This allows us to solve the respective heat equation with initial conditions in $\ell^p.$ Besides that, we will explore two more criteria that allow to extend semigroups to $\ell^p,$ namely boundedness of Laplacians in $\ell^p$ and form boundedness of the Forman curvature.\\
 Building on these results, we establish the $p$-independence of the Hodge Laplacian spectrum given  uniform subexponential volume growth and curvature bounds. Our findings not only prove discrete analogues of  classical results from Riemannian geometry \cite{Braun,Ch, Sturm} but also hold under rather weak assumptions. In particular, we do not need to assume uniform lower curvature bounds but only form bounded curvature  nor do we need any apriori heat kernel bounds beyond the Davies-Gaffney-Grigory'an bound, which is proven in this paper and always holds for a given intrinsic metric. Furthermore, in the combinatorial setting, which we discuss below, we do not even need the curvature assumption at all.\\
\eat{{\red From the smooth geometry of Riemannian manifolds to the discrete structure of simplicial complexes, the Hodge Laplacian stands as a cornerstone operator connecting geometry, topology, and analysis. Its spectral properties reveal deep insights into the shape and structure of the underlying space, while the associated heat equation describes how geometric information evolves and diffuses over time.} The $\ell^2$ theory of the heat evolution is classical and well-understood through the spectral theorem. On the other hand, the behavior on general $\ell^p$ spaces remains largely unexplored territory in the discrete setting, presenting fundamental challenges and {\red new phenomena.} This paper takes on this task to establish the $\ell^p$-theory of the heat semigroup and to explore the  spectral theory of their generators. While its spectral theory on $L^2$ spaces is classical in both the continuous \cite{Warner1983, deRham} and discrete setting \cite{Dodziuk, Eckmann, HJ}, the behavior on general $L^p$ and $\ell^p$ spaces reveals {\red more subtle analytic phenomena.} In the continuum, the $L^p$-theory of the heat equation for differential forms has been studied extensively \cite{ChL,???} and the question of $p$-independence of spectra has attracted considerable attention for Laplace and Schrödinger-type operators on functions \cite{HempelVoigt1986,HempelVoigt1987, Sturm,CKK} and on forms as well \cite{Ch}. In the discrete realm, recent years have witnessed growing interest in Hodge theory on simplicial complexes \cite{Parzanchevski, ...}, though the $\ell^p$-analysis remains largely unexplored beyond the graph case \cite{BauerHuaKeller,CHQSZ}.  Here, we prove $p$-independence of the Hodge spectrum under natural geometric conditions. 

In our approach, we derive Davies-Gaffney-Grigoryan type estimates for the heat semigroup,  answering an open question of \cite{HL}. These estimates are then employed to demonstrate the extension of the heat semigroup to $\ell^p$ spaces for  $p\in[1,\infty]$. {\red This extension is crucial for understanding the behavior of the heat equation beyond the Hilbert space setting and has significant implications for the analysis on simplicial complexes.} Building on these results, we establish the $p$-independence of the Hodge Laplacian spectrum under suitable assumptions on volume growth and curvature bounds. Our findings not only generalize classical results from Riemannian geometry \cite{Ch, Sturm,...} to the discrete setting {\red but also provide new tools for analyzing the interplay between geometry and analysis on simplicial complexes.}}
To set the stage, we consider the Hodge Laplacian on a combinatorial simplicial complex $\Sigma$  with standard weights, which is a natural instance applicable to our more general results. For details on the general setting of weighted simplicial complexes we refer to Section~\ref{s:weighted} recalling the setting from \cite{BK}.
So, let $  \Sigma$ be a 
combinatorial simplicial complex with standard weights
and consider the Hodge Laplacian acting on a direct sum over all 
$k$-forms $\omega=\oplus_{k}\omega_k$ 
\[\Delta^H = \delta\partial + \partial\delta,\]
where $\delta$ and $\partial$ are the coboundary and boundary operator respectively.
In \cite{BK}, it was shown that the quadratic form \[Q^H(\omega)=\|\delta \omega\|^2_2 + \|\partial \omega\|^2_2    \] defined on compactly supported forms is closable and thus gives rise to a positive self-adjoint operator $\Delta^H_2$ on $\ell^2$.

We are interested in two geometric concepts:  curvature bounds and volume growth bounds. For curvature, we consider the Forman curvature $c^H,$ which arises naturally as a potential from the discrete Bochner-Weitzenböck formula for $\Delta^H$, \cite{F}. Specifically, 
for $\tau\in  \Sigma$ with $k = \dim(\tau)$, we have , cf. \cite[Section~6.1]{BK}
\begin{align*}
	c^H(\tau)  = 2(k+1)+(k+2)\#\lbrace\sigma\succ\tau\rbrace-\sum_{\rho\prec\tau}\#\lbrace \tau'\succ \rho\rbrace,
\end{align*}
where $\prec$ and $\succ$ refer to the incidence relation on the complex.
Even though $\Delta^H_2$ is a positive operator, $c^H$ may take negative values. We consider the situation where the negative part $c^H_-= -(c^H \wedge 0)$ is form bounded with respect to $Q^H$, i.e., where one has $M,C\geq 0$ such that for all compactly supported $\varphi\in C_c(\Sigma)$
\begin{equation*}\label{cformbound}
     \| \sqrt{c_-^H}\varphi\|_2^2\leq M {Q}^H(\varphi)+C\Vert\varphi\Vert^2_2.
 \end{equation*}
In this case, we say that $\Sigma$ has \emph{form bounded curvature} with \emph{bound $M$}.  This assumption is natural, as it generalizes uniform lower curvature bounds while maintaining control over the analytic properties of $\Delta^H_2$.

Regarding volume growth, we consider the growth of balls with respect to 
the combinatorial graph metric on the $1$-skeleton of $\Sigma$ given by 
\begin{equation*}	d(v,w)=\min\{ n\mid \mbox{there is a path } {v=v_{0}\sim\ldots\sim v_{n}=w}\}.\end{equation*}
Here $v\sim w$ means the usual adjacency relation on graphs, i.e., $v$ and $w$ are connected by an edge in the 1-skeleton of $  \Sigma.$ 
We assume that the $1$-skeleton is connected and consider the growth of balls $B_r(v) = \{w\in  \Sigma_0\mid d(v,w)\leq r\}$ with respect to $d$. We say that the complex has \emph{(uniform) exponential growth} with  \textit{growth rate} $\nu\ge 0$ if there is $C>0$ such that for all $v\in  \Sigma_0$, $r>0$,  
\[\#B_r(v)\leq C\mathrm{e}^{\nu r}.\]
Similarly, we say that it has \textit{(uniform) subexponential volume growth} if for all $\varepsilon>0$ there is $C_\varepsilon>0$ such that for all $v\in  \Sigma_0,r>0$
\[\#B_r(v) \leq C_\varepsilon\mathrm{e}^{\varepsilon r} .\]
One may  wonder why the combinatorial distance is sufficient for our purposes here. The reason is that  exponential volume bounds automatically imply boundedness in the combinatorial setting, see Proposition~\ref{boundedness} and Lemma~\ref{LemmaVolGrowthSC}. This is different for weighted simplicial complexes as discussed in Section~\ref{s:weighted}.

Our goal is to study the heat equation for $t>0$ and $x\in  \Sigma$
\begin{equation} \tag{HE} \label{HE}
     -\Delta^H \omega_t (x)= \frac{d}{dt}\omega_t(x)     
\end{equation}
for various initial conditions $\omega_0.$ For $\omega_0\in \ell^2$, this is classically covered by the spectral theorem via the semigroup $\omega_t = \mathrm{e}^{-t\Delta^H_2}\omega_0$. 
We say one can \emph{solve \eqref{HE} on $\ell^p$} for $p\in [1,\infty]$ if for all initial conditions $\omega_0\in\ell^p$ there is a  function
$[0,\infty)\rightarrow\ell^p$, $t\mapsto \omega_t$ such that $t\mapsto \omega_t(x)$ is  continuous on $[0,\infty)$, differentiable on $(0,\infty)$ and satisfies \eqref{HE} for all $t>0$ and $x\in \Sigma$.
Moreover, we say that \eqref{HE} is \emph{solved by a strongly continuous semigroup} on $\ell^p$ if there is a strongly continuous semigroup $S_p$ on $\ell^p$ such that a solution of \ref{HE} is given by $\omega_t = S_p(t)\omega_0$ for all initial conditions $\omega_0\in\ell^p.$
We prove the following special case of our main results. 

\begin{theorem}[Heat equation]\label{thm:HE}
    Let $  \Sigma$ be a combinatorial simplicial complex. 
    \begin{itemize}
        \item [(a)] Assume $  \Sigma$ has form bounded curvature with bound $M\geq 0$. Then,  \eqref{HE} is solved by a strongly continuous semigroup on $\ell^p$ for all 
    $p$ in the interval $ I=  (2\frac{M+1}{M}-2\frac{\sqrt{M+1}}{M},2\frac{M+1}{M}+2\frac{\sqrt{M+1}}{M})$, where $I=(1,\infty)$ if $M=0$.
 \item [(b)] Assume $  \Sigma$ has at most exponential growth. Then,  \eqref{HE} is solved by a strongly continuous semigroup on $\ell^p$ for all 
    $p\in  [1,\infty]$. 
    \end{itemize}
\end{theorem}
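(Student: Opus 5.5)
The plan is to view $\Delta^H_2$ as a positive magnetic Schrödinger operator on the graph whose vertices are the simplices of $\Sigma$. Via the Bochner–Weitzenböck formula, $Q^H(\varphi)=Q_0(\varphi)+\langle c^H\varphi,\varphi\rangle$, where $Q_0$ is a magnetic form whose "signature" $\pm1$ on edges records relative orientations, and $c^H$ is the Forman curvature. Both parts follow the same pattern. First produce a semigroup on $\ell^p$ that is consistent with $\mathrm{e}^{-t\Delta^H_2}$ on $\ell^2\cap\ell^p$. Then check that $\omega_t(x)=S_p(t)\omega_0(x)$ is pointwise continuous and differentiable and satisfies \eqref{HE}. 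For this last step we use that $\Delta^H$ acts pointwise as a locally finite operator: the complex is locally finite, and in the situation of (b) it is even bounded. We can then compute $\frac{d}{dt}S_p(t)\omega_0(x)$ by approximating $\omega_0$ by compactly supported forms, applying the $\ell^2$ theory, and passing to the limit using continuity of point evaluations on $\ell^p$.

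\textbf{Part (a).} Decompose $c^H=c^H_+-c^H_-$. The form $Q_0+c^H_+$ is the form of a magnetic Schrödinger operator with nonnegative potential. By domination (Kato's inequality / Simon's criterion), its semigroup is pointwise dominated by a positive Markovian semigroup, so it extends consistently to all $\ell^p$, $p\in[1,\infty]$. The negative part is handled as in the classical Liskevich–Semenov/Beurling–Deny approach to $L^p$ contractivity under form-bounded negative potentials. For $u\in C_c$ we test $Q^H$ against $u|u|^{p-2}$ and use the standard inequality
\[
\mathrm{Re}\,Q_0(u,u|u|^{p-2})\ \ge\ \tfrac{4(p-1)}{p^2}\,Q_0(|u|^{p/2}\,\mathrm{sgn}\,u),
\]
which for magnetic forms is an elementary two-point estimate. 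Combining this with the form bound $\langle c^H_-\psi,\psi\rangle\le MQ^H(\psi)+C\|\psi\|^2$ applied to $\psi=|u|^{p/2}\mathrm{sgn}\,u$, we obtain quasi-accretivity of $\Delta^H+C'$ in $\ell^p$ provided $\frac{4(p-1)}{p^2}>\frac{M}{M+1}$. The bookkeeping here is that $Q_0+c_+^H\ge(1+M)^{-1}(Q^H)$-type comparisons are used to rewrite the bound in terms of $Q_0$. Solving $Mp^2-4(M+1)p+4(M+1)<0$ gives exactly the interval $I$. Hence $\mathrm{e}^{-t\Delta^H_2}$ satisfies $\|\mathrm{e}^{-t\Delta^H}\|_{p\to p}\le \mathrm{e}^{C't}$ on $\ell^2\cap\ell^p$ and extends by density to a strongly continuous semigroup for $p<\infty$ in $I$. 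Strong continuity follows from the $\ell^2$ strong continuity on compactly supported forms together with the uniform bound. I expect this step to be the main obstacle: making the exponent computation rigorous for the magnetic form, and ensuring the form core issue (working on $C_c$ versus the form domain) is handled by a truncation argument.

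\textbf{Part (b).} By Proposition~\ref{boundedness} and Lemma~\ref{LemmaVolGrowthSC}, exponential growth forces bounded vertex degrees, hence bounded degrees of all simplices. Therefore $\Delta^H$ is a bounded operator on every $\ell^p$, $p\in[1,\infty]$: it is a finite-range matrix with uniformly bounded entries and row/column sums, so Schur's test applies. The semigroup is then simply $S_p(t)=\sum_n(-t\Delta^H)^n/n!$, which is norm continuous and in particular strongly continuous, even on $\ell^\infty$. It agrees with $\mathrm{e}^{-t\Delta^H_2}$ on $\ell^2\cap\ell^p$, since the bounded $\ell^2$ operator is the closure of the form operator. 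Differentiability in norm gives \eqref{HE} pointwise immediately. Alternatively, one could use the Davies–Gaffney–Grigor'yan kernel bound together with exponential volume growth to obtain $\ell^1$ and $\ell^\infty$ bounds via the Schur test, but the boundedness route is shorter.
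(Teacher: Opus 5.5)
Your proposal is correct. Part (b) and the $\ell^p$ bound in part (a) follow the paper:
\begin{itemize}
\item for (b), Lemma~\ref{LemmaVolGrowthSC}, Proposition~\ref{boundedness} and Theorem~\ref{semigroupextensioncomplexes}~(c);
\item for the bound in (a), Lemma~\ref{pFormEstimate}, Theorem~\ref{formboundedpotential} and Lemma~\ref{strongcont}.
\end{itemize}
The magnetic two-point estimate you worry about reduces, as in Lemma~\ref{pFormEstimate}, to the scalar Liskevich--Semenov inequality $(\frac{p-2}{p})^2(1-t^{p/2})^2\ge t(1-t^{(p-2)/2})^2$. The core issue you flag is resolved in the paper by running the differential inequality for $\mathrm{e}^{-tH_K}f$ on finite sets $K$. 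There it is an honest finite-dimensional computation, since $Q_K=Q|_{\ell^2(K)}$. One then passes to $S_2(t)f$ with Proposition~\ref{semiconv} and Fatou; that is the ``truncation'' to use. Your threshold $M/(M+1)$ is exactly the relative form bound of $c^H_-$ with respect to the positive part $q$. Since $q\ge 0$, the non-strict inequality already suffices. The domination remark for $Q_0+c^H_+$ is not needed.

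The genuine difference is how you verify \eqref{HE}.

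\textbf{The paper's route.} Stein interpolation shows that the rescaled semigroup is analytic on $\ell^r$ for $r$ strictly between $2$ and the endpoints (Proposition~\ref{analyticity}). Hence $S_p(t)\omega_0\in D(\Delta^H_p)$ for $t>0$. Theorem~\ref{actionHp} then identifies $\Delta^H_p$ with $\delta\partial+\partial\delta$. This is why the interval $I$ is open.

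\textbf{Your route.} You use only local finiteness.
\begin{itemize}
\item For $\varphi\in C_c(\Sigma)$ the $\ell^2$ theory gives $S_2(t)\varphi(x)-\varphi(x)=-\int_0^t\Delta^H S_2(s)\varphi(x)\,ds$.
\item $\Delta^H\omega(x)$ involves only finitely many values of $\omega$.
\item $|S_p(s)(\varphi_n-\omega_0)(y)|\le\|S_p(s)\|_{p,p}\|\varphi_n-\omega_0\|_p$ holds uniformly for $s$ in compact sets.
\end{itemize}
So the identity passes to every $\omega_0\in\ell^p$, with a continuous integrand. The fundamental theorem of calculus then gives \eqref{HE}. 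It is this uniformity in $s$ that justifies exchanging the limit and the time derivative, so phrase the step that way.

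\textbf{What each buys.} Your argument is more elementary and yields exactly the pointwise solutions required by the definition in the introduction. It works for every $p$ at which a consistent strongly continuous $S_p$ exists. In particular it covers the endpoints of the closed interval of Theorem~\ref{formboundedpotential}, where the paper only claims solvability for $\omega_0\in D(\Delta^H_p)$. The paper's route gives more regularity: $\omega_t\in D(\Delta^H_p)$ and differentiability in the $\ell^p$-norm, i.e.\ the stronger notion \eqref{HE_p} of Section~\ref{s:weighted}.
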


Indeed, the heat equation \eqref{HE} can also be solved for $p$ at the boundary  of the interval $I$ in (a)  if we additionally assume that the initial condition $\omega_0$ lies in the domain of the generator of the semigroup. More specifically, our results give that the $\ell^2$-semigroup extends to a strongly continuous semigroup on $\ell^p$ for $p\neq\infty$ in the corresponding closed intervals. However, for values $p$ at the boundary of $I$ the resulting semigroups are not necessarily analytic and therefore do not automatically map into the domain of their generators.
We denote the generator of this semigroup by $-\Delta^H_p$ and the spectrum by $\sigma(\Delta^H_p).$ 

Using these results about the heat equation, we establish $p$-independence of the spectrum of the Hodge Laplacian under suitable geometric conditions. Specifically, we prove the following theorem.

\begin{theorem}
[$p$-independence of the spectrum]\label{thm:p-independence}
    Let $  \Sigma$ be a combinatorial simplicial complex with 
    subexponential growth. Then, for all 
    $p\in [1,\infty]$, we have
    \[\sigma(\Delta^H_p) = \sigma(\Delta^H_2).\]
\end{theorem}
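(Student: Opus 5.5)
The plan is to use the strategy of Hempel–Voigt and Sturm, adapted to the discrete magnetic setting. There are two pieces of input. The first is the Davies–Gaffney–Grigor'yan estimate for the $\ell^2$ heat kernel of $\Delta^H_2$ with respect to the combinatorial metric. The second is the fact, via Proposition~\ref{boundedness} and Lemma~\ref{LemmaVolGrowthSC}, that subexponential (indeed exponential) growth makes $\Delta^H$ a bounded operator on all $\ell^p$. By Theorem~\ref{thm:HE}(b), the semigroups $S_p(t)=\mathrm{e}^{-t\Delta^H_p}$ then exist and are consistent for all $p\in[1,\infty]$. Because $\Delta^H_p$ is bounded, $S_p$ is norm continuous and $\Delta^H_p$ is simply the matrix operator $\Delta^H$ acting on $\ell^p$. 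By duality, $\sigma(\Delta^H_p)=\sigma(\Delta^H_{p'})$, since the matrix is symmetric. By Riesz–Thorin interpolation applied to the resolvents, it is enough to prove two inclusions: $\sigma(\Delta^H_p)\subseteq\sigma(\Delta^H_2)$ for $p=1$ (the case $p=\infty$ then follows by duality), and conversely $\sigma(\Delta^H_2)\subseteq\sigma(\Delta^H_p)$.

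For the inclusion $\sigma(\Delta^H_1)\subseteq\sigma(\Delta^H_2)$ I will use exponential weights, following Sturm. Fix $z\notin\sigma(\Delta^H_2)$. For $w_\varepsilon(x)=\mathrm{e}^{\varepsilon d(x,x_0)}$, consider the conjugated operators $w_\varepsilon\Delta^H w_\varepsilon^{-1}$. Since $\Delta^H$ has finite hopping range and bounded entries, this conjugation is a perturbation of $\Delta^H$ on $\ell^2$ of norm $O(\varepsilon)$, uniformly in $x_0$. Hence $z\notin\sigma(w_\varepsilon\Delta^H_2w_\varepsilon^{-1})$ for small $\varepsilon$, with a uniformly bounded resolvent. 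This gives the kernel estimate $|(\Delta^H_2-z)^{-1}(x,y)|\le C\,\mathrm{e}^{-\varepsilon d(x,y)}$. Alternatively, one can reach the same estimate by Laplace transforming the Davies–Gaffney–Grigor'yan bound, i.e.\ by writing the resolvent as $\int \mathrm{e}^{tz}S(t)\,dt$ for $\operatorname{Re}z$ negative enough and continuing analytically. Subexponential growth gives $\sum_y \mathrm{e}^{-\varepsilon d(x,y)}\le \sum_r C_{\varepsilon/2}\mathrm{e}^{\varepsilon r/2}\mathrm{e}^{-\varepsilon r}<\infty$ uniformly in $x$. Simplices in the complex are attached to vertices with bounded multiplicity, which follows from the same boundedness lemma, so this sum still converges uniformly when $y$ ranges over simplices. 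By Schur's test, the kernel $(\Delta^H_2-z)^{-1}(x,y)$ therefore defines a bounded operator on $\ell^1$ and on $\ell^\infty$. It remains to check that this operator inverts $\Delta^H_1-z$. This holds on $C_c$ by consistency, and then extends to all of $\ell^1$ by density and the boundedness of both operators. Hence $z\notin\sigma(\Delta^H_1)$.

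For the reverse inclusion I will use the standard argument. Suppose $z\notin\sigma(\Delta^H_1)$, so also $z\notin\sigma(\Delta^H_\infty)$ by duality. The resolvents $(\Delta^H_1-z)^{-1}$ and $(\Delta^H_\infty-z)^{-1}$ are consistent, since they agree with $(\Delta^H_p-z)^{-1}$ on finitely supported forms via Neumann series for large $|z|$ and then by analytic continuation along a connected component of the resolvent set. Riesz–Thorin then gives a bounded operator on $\ell^2$ inverting $\Delta^H_2-z$. The delicate point here is connectedness of the resolvent set: by the first inclusion, $\sigma(\Delta^H_1)\subseteq\sigma(\Delta^H_2)\subseteq[0,\infty)$, so $\mathbb{C}\setminus\sigma(\Delta^H_1)$ is connected and the continuation is legitimate. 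Interpolating between $p=1$ and $p=2$ and using duality then yields equality of spectra for every $p$.

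The main obstacle is the first inclusion. One needs a resolvent kernel decay rate $\varepsilon$ that can be taken independent of the volume growth constant, and the argument must close even though the decay rate depends on the distance of $z$ to $\sigma(\Delta^H_2)$. Subexponential growth is precisely what allows an arbitrarily small $\varepsilon$ to work, so this is where that hypothesis is used.
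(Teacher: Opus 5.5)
Your proof is correct, but it takes a genuinely different and more elementary route than the paper.

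\textbf{The paper's route.} The paper does not use the matrix structure at this point. It builds an intrinsic metric with finite jump size on all of $\Sigma$ (Lemma~\ref{LemmaSubexpVolGrowthSC}). It uses boundedness of $\Delta^H$ on $\ell^1$ only to invoke Proposition~\ref{lowerboundpotential}, which shows the Forman curvature is bounded below and hence form bounded. It then applies the general Theorem~\ref{lpIndependence}, whose proof is built for possibly unbounded operators:
\begin{itemize}
\item the weighted bound $\Vert\mathrm{e}^{\psi}(H_2-z)^{-1}\mathrm{e}^{-\psi}\Vert_{2,2}\le C$ comes from Kato's theory of sectorial forms, uniformly on compact subsets of $\rho(H_2)$ (Lemma~\ref{resolventbound});
\item pointwise kernel decay is only available for $G_\alpha$ with $\alpha$ very negative (Lemma~\ref{Galpha}, via Davies--Gaffney--Grigor'yan and the Laplace transform);
\item the resolvent identity transfers this decay to $(H_2-z)^{-2}$ (Lemma~\ref{lp2bounded});
\item both inclusions are closed by Hempel--Voigt analyticity and unique continuation.
\end{itemize}

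\textbf{Your route.} With bounded degree, $\Delta^H$ is a finite-range matrix with bounded entries and bounded row sums. So conjugation by $\mathrm{e}^{\psi}$ is an $O(\varepsilon)$ \emph{bounded} perturbation, a Combes--Thomas argument. A Neumann series then gives exponential decay of the kernel of $(\Delta^H_2-z)^{-1}$ itself, for each single $z\in\rho(\Delta^H_2)$. Schur's test plus subexponential growth then gives $z\in\rho(\Delta^H_p)$ directly. This inclusion needs no intrinsic metric, no curvature bound, no heat kernel estimate, no squared resolvent and no analytic continuation. Since $w_\varepsilon^{-1}\le 1$ and all matrices have finite range, the identity $(w_\varepsilon\Delta^H w_\varepsilon^{-1}-z)^{-1}=w_\varepsilon(\Delta^H_2-z)^{-1}w_\varepsilon^{-1}$ can even be checked pointwise on $C_c(\Sigma)$ without truncating $w_\varepsilon$. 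Your reverse inclusion is an elementary version of the paper's. It also follows at once from Theorem~\ref{finitemeasure}, since $m\equiv 1$.

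\textbf{What each buys.} The paper's route is more general: it covers weighted complexes with merely form bounded curvature, where $\Delta^H$ is unbounded and the conjugation is only a form-small perturbation. Yours is tied to the bounded combinatorial case.

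\textbf{Details to fix.}
\begin{itemize}
\item Drop the ``alternative'' of Laplace transforming the Davies--Gaffney--Grigor'yan bound and continuing analytically. The transform gives $|g_z(x,y)|\le C\mathrm{e}^{-\varepsilon d(x,y)}$ only for $\mathrm{Re}\,z<-C_\varepsilon$. Analytic continuation of $z\mapsto g_z(x,y)$ does not carry a uniform exponential bound along with it. Bridging exactly this gap is what Lemma~\ref{resolventbound} and the resolvent identity do in the paper.
\item Since $d$ lives on vertices, you must specify the weight on simplices. For example, take $\psi(\sigma)=\varepsilon\min_{v\in\sigma}d(v,v_0)$ with $v_0\in\tau$. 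It changes by at most $\varepsilon$ between $\Delta^H$-neighbours, because these share a vertex or are adjacent vertices. The bound $\#\{\sigma\ni v\}\le 2^{D}$, with $D$ the degree bound, then makes the Schur sums finite.
\item For the reverse inclusion at $1<p<\infty$, interpolate between $p$ and its dual exponent, not between $1$ and $2$. Connectedness of the resolvent set comes from $\sigma(\Delta^H_p)\subseteq\mathbb{R}$, established in the first step.
\end{itemize}
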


In \cite{BK}, we developed a framework for Hodge Laplacians on weighted simplicial complexes and established their representation as signed Schrödinger operators. This perspective not only unifies various discrete Laplacians but also opens the door to applying techniques from the theory of Schrödinger operators. This will be instrumental in our analysis of the heat equation and the spectral properties of the Hodge Laplacian. More precisely, we will prove the results above in the more general setting of positive magnetic Schrödinger operators whose negative part of the potential is form bounded. These will be introduced in the next section, where we prove crucial Davies-Gaffney-Grigoryan type estimates, which we use in Section~\ref{s:semigroupsonlp} to extend the generated semigroups on $\ell^2$ to $\ell^p$ spaces and solve the parabolic equation. Finally, in Section~\ref{s:p-independence}, we study $p$-independence of the spectrum. The results for the Hodge Laplacian stated above then follow as special cases in 
Section~\ref{s:weighted}.

We write $\#A$ or $|A|$ for the cardinality of a finite set $A$. Furthermore, $C$ or $C''$ denote constants which may change from line to line.

\section{Magnetic Schrödinger Operators and Semigroups on $\ell^2$}\label{s:magneticschrodinger} 
In this section, we consider magnetic Schrödinger operators on weighted graphs.  In \cite{BK}, we have shown that the Hodge Laplacian can be represented as a magnetic Schrödinger operator. Thus, the results of this section apply in particular to the Hodge Laplacian. 
For background on operators on graphs, we refer to \cite{KLW, GKS,S}.

\subsection{Magnetic Schrödinger Operators on Graphs}
Let $X$ be a discrete set and denote by $C(X)$ the space of all complex valued functions on $X.$ Respectively, denote by $C_c(X)$ the space of all finitely supported functions on $X.$ For a discrete set $A$ and an absolutely summable or positive function $f\in C(A)$, we fix the notation
\[\sum_A f =\sum_{x\in A} f(x).\]
A function $m:X\rightarrow(0,\infty)$ extends to a measure on $X$ of full support by setting  $m(A)=\sum_{  A}m $ for $A\subseteq X.$ We call $(X,m)$ a \textit{discrete measure space}. The $\ell^p$-spaces, $p\in [1,\infty)$ associated to $(X,m)$ are defined as usual by  
\begin{align*}
    \ell^p=\ell^p(X,m)=\Big\{ f\in C(X)\mid \Vert \omega\Vert_p^p=\sum_{X}m \vert f \vert^p <\infty\Big\}
\end{align*}
and
\begin{align*}
   \ell^\infty=\ell^\infty(X)=\Big\{f\in C(X)\mid \Vert \omega\Vert_\infty=\sup_{X}\vert f \vert <\infty\Big\}.
\end{align*}
The space $\ell^2$ is a Hilbert space with inner product  $\langle f,g\rangle = \sum_X mf\overline{g}.$\\
A graph $b$ over $(X,m)$, is a symmetric function   $b :X\times X\rightarrow[0,\infty)$  with zero diagonal satisfying \[\sum_{y\in X}b(x,y)<\infty\] for all $x\in X$. Moreover, we let $o: X\times X\rightarrow\mathbb{T} = \{z\in\mathbb{C}\mid \vert z\vert = 1\}$ be a function satisfying $o (x,y) = \overline{o(y,x)}$ for all $x,y\in X.$ We call $o$ a \textit{magnetic potential} on $X.$ Finally, let $c:X\rightarrow\mathbb{R}$ be an \emph{(electric) potential}. We call the triple $(b,o,c)$ over $(X,m)$ a \textit{magnetic Schrödinger graph}. We say that the graph is \emph{locally finite} if, for all $x\in X$, we have $\#\lbrace y\in X\mid b(x,y)> 0\rbrace<\infty.$\\
The \textit{magnetic Schrödinger operator} $\mathcal{H}$ associated to the triple $(b,o,c)$ over $(X,m)$ is defined on
\begin{align*}
    F=\{ f\in C(X)\mid \sum_{y\in X}b(x,y)\vert f(y)\vert <\infty~\text{for all}~x\in X\}
\end{align*}
and is acting as
\[\mathcal{H}f(x) = \frac{1}{m(x)}\sum_{y\in X}b(x,y)\left(f(x)-o(x,y)f(y)\right)+\frac{1}{m(x)}c(x)f(x).\]
To introduce a self-adjoint restriction on $\ell^2$, we define an associated sesquilinear form on $C_c(X)$. For a function $f\in C(X)$, we denote
\begin{align*}
    \nabla_{o}f (x,y) = f(x)-o(x,y)f(y)\quad\mbox{and}\quad \nabla f (x,y) = f(x)-f(y).
\end{align*}
Green's formula \cite[Lemma 2.1.]{GKS} then states that for $\varphi  \in C_c(X)$ and $f\in F$ one has
\begin{align*}
    \sum_X m\mathcal{H}f \overline{\varphi}= \frac{1}{2}\sum_{X\times X}b \nabla_o f\overline{\nabla_o \varphi }  + \sum_{ X}c  f \overline{\varphi }=\sum_X mf \overline{\mathcal{H}\varphi} .
\end{align*}
For $\varphi,\psi\in C_c(X)$, we let
\begin{align*}
    Q(\varphi,\psi) = \frac{1}{2}\sum_{X\times X}b \nabla_o \varphi\overline{\nabla_o \psi }  + \sum_{ X}c \varphi \overline{\psi }.
\end{align*}
We assume that $Q$ is \emph{positive}, i.e.,  for all $\varphi\in C_c(X)$
\begin{align*}
    Q(\varphi) := Q(\varphi,\varphi) \geq 0.
\end{align*}
Whenever $Q$ is closable on $\ell^2,$ we denote its closure again by $Q$ with domain $D(Q).$ The associated positive self-adjoint operator on $\ell^2$ is denoted by $H=H_2.$ \\

{\bf Assumption.} We assume $(b,o,c)$ is a magnetic Schrödinger graph over $(X,m)$ such that $Q$ is positive and closable on $\ell^2$.\\

For criteria for closability of $Q$, we refer to \cite{GKS}. One particular instance relevant to this paper are forms with form bounded potential. We say that the potential $c$ is \textit{form bounded} with respect to $Q$ \emph{with bounds} $M,C\geq 0$ if for all $\varphi\in C_c(X)$ 
\begin{equation*}\label{formbounded}
     \| \sqrt{c_-/m}\varphi\|_2^2\leq M Q(\varphi)+C\Vert\varphi\Vert^2_2,
 \end{equation*}
where $c_- = -(c\wedge 0)$ is the negative part of $c.$ In \cite[Proposition 2.8.]{GKS} it is shown that if $c$ is form bounded, then $Q$ is   closable in $\ell^2 $. Note that a form bounded potential corresponds to form bounded curvature for the Hodge Laplacian as indicated in the introduction.  However, positivity and closability of the form of the Hodge Laplacian are always satisfied, even without any assumption on the curvature, as discussed in \cite{BK}.

\subsection{The Magnetic Schrödinger Semigroup on $\ell^2$}
The spectral theorem gives rise to a strongly continuous contraction semigroup $S_2:[0,\infty)\rightarrow\mathcal{B}(\ell^2)$, $S_2 (t)f = \mathrm{e}^{-t H} f,$ where $\mathcal{B}(\ell^2)$ are the bounded operators on $\ell^2$, which is characterized for $t\geq 0$ by its integral kernel
 \[p_t: X \times X\rightarrow\mathbb{C},\qquad p_t(x,y) = \frac{\langle\mathrm{e}^{-tH} 1_x,1_y\rangle}{m(x)m(y)}.\]
Let $K\subseteq X$ be finite and $\ell^2(K,m) = C(K).$
We have the canonical inclusion $\iota_K:\ell^2(K,m)\rightarrow\ell^2,$ which is obtained by extending functions from $K$ to $ X$ by zero. Its adjoint $\iota^*_K$ is given by the restriction of functions in $\ell^2$ to $K.$ We define the operator $H_K$ acting on $\ell^2(K,m)$ as 
\[H_K f( x) = \frac{1}{m( x)}\sum_{ y\in K}b( x, y)\left( f( x)-o( x, y) f( y)\right)+\frac{1}{m( x)}c_K( x) f( x),\]
where $
c_K( x)= c( x)+\sum_{ y\in X\backslash K}b( x, y) .$
By construction, $H_K$ is the self-adjoint positive operator associated to the quadratic form $Q_K = Q\mid_{\ell^2(K,m)}$ and gives rise to the semigroup $\mathrm{e}^{-tH_K}.$ 

Now let $K_n$, $n\in\mathbb{N}$, be an exhaustion of $ X$ by finite subsets, i.e., finite subsets such that $K_n\subseteq K_{n+1}$ and $\bigcup_{n}K_n =  X.$ Then we have the following strong convergence of semigroups, shown in \cite[Proposition~2.20]{GKS}.
\begin{proposition}[Semigoup convergence]\label{semiconv} Assume that $Q$ is positive and closable on $\ell^2.$
    Let $t\geq 0$ and $K_n$ be an exhaustion. Then,
    \[\iota_{K_n}\mathrm{e}^{-t{H}_{K_n}}\iota_{K_n}^*\rightarrow\mathrm{e}^{-t H}~\text{strongly in}~\ell^2~\text{as}~n\rightarrow\infty.\]
\end{proposition}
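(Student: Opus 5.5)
The plan is to use monotone convergence of the forms $Q_{K_n}$ and translate it into strong resolvent convergence, which then yields strong convergence of the semigroups. Extend each $Q_{K_n}$ to a form $q_n$ on $\ell^2$ by setting $q_n(f)=Q(\iota_{K_n}\iota_{K_n}^*f)$ if $f$ is supported in $K_n$ and $q_n(f)=\infty$ otherwise. Since $\iota_{K_n}\iota_{K_n}^* f\in C_c(X)$, the form $Q_{K_n}$ is exactly the restriction of $Q$ to $C(K_n)\subseteq C_c(X)$. The nonnegative self-adjoint operator associated with $q_n$, in the generalized sense on the closed subspace $\ell^2(K_n,m)$, is $H_{K_n}$. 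Its semigroup, extended by zero on the orthogonal complement, is precisely $\iota_{K_n}\mathrm{e}^{-tH_{K_n}}\iota_{K_n}^*$.

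The domains $C(K_n)$ increase and $q_n=q_{n+1}$ on $C(K_n)$, so $(q_n)$ is a monotonically decreasing sequence of closed nonnegative forms. By the monotone convergence theorem for decreasing forms (Simon; Kato VIII.3), the associated resolvents converge strongly to the resolvent of the operator associated with the closure of the limit form $q_\infty$, where $q_\infty(f)=\inf_n q_n(f)$. The form $q_\infty$ equals $Q$ on $C_c(X)=\bigcup_n C(K_n)$ and $\infty$ elsewhere. This form is closable, since $Q$ is closable by assumption, and its closure is the closure of $Q$ on $C_c(X)$, whose operator is $H$.

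I expect the main obstacle to be this identification of the limit. The decreasing-forms theorem only gives convergence to the operator of the \emph{closable part} (the regular part) of $q_\infty$. We need $q_\infty$ itself to be closable, so that its regular part coincides with its closure. This holds precisely because of the standing assumption that $Q$ is closable on $C_c(X)$. Positivity of $Q$ keeps all forms nonnegative, so resolvents at $-1$ are well defined and uniformly bounded by $1$.

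Finally, strong resolvent convergence of nonnegative self-adjoint operators, in the generalized sense on the subspaces $\ell^2(K_n,m)$, implies strong convergence of $\mathrm{e}^{-tA_n}P_n$ for each fixed $t\geq 0$. One way to see this is to approximate the continuous function $\lambda\mapsto\mathrm{e}^{-t\lambda}$ on $[0,\infty)$, which vanishes at infinity, uniformly by polynomials in $(1+\lambda)^{-1}$ via Stone–Weierstrass. The only subtlety is the constant term $\mathrm{e}^{-t\cdot\infty}=0$, which is consistent with the zero extension off $K_n$. This proves $\iota_{K_n}\mathrm{e}^{-tH_{K_n}}\iota_{K_n}^*\to\mathrm{e}^{-tH}$ strongly.
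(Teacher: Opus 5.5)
Your proof is correct. The paper gives no argument of its own here; it imports the statement from \cite[Proposition~2.20]{GKS}. Your proposal therefore fills in what the paper treats as a black box, via the standard route: Simon's monotone convergence theorem for decreasing, not necessarily densely defined, closed forms, followed by functional calculus.

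The identifications all check out:
$q_n$ is closed since $C(K_n)$ is finite dimensional;
its operator on $\ell^2(K_n,m)$ is $H_{K_n}$, since the paper notes $Q_K=Q|_{\ell^2(K,m)}$;
$q_\infty$ is $Q$ on $C_c(X)=\bigcup_n C(K_n)$;
closability of $Q$ is exactly what makes the regular part equal to the closure.

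There is one small slip. For $t=0$ the function $\mathrm{e}^{-t\lambda}$ does not vanish at infinity, but then the claim reduces to $\iota_{K_n}\iota_{K_n}^*\to I$ strongly, which is trivial.

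It is worth knowing that the monotone convergence machinery can be bypassed entirely here. Take $Q_1=Q+\langle\cdot,\cdot\rangle$ on the Hilbert space $D(Q)$, and set $u=(H+1)^{-1}f$ and $u_n=\iota_{K_n}(H_{K_n}+1)^{-1}\iota_{K_n}^*f$. One checks that $Q_1(u_n,v)=\langle f,v\rangle=Q_1(u,v)$ for all $v\in C(K_n)$. Hence $u_n$ is the $Q_1$-orthogonal projection of $u$ onto $C(K_n)$. These subspaces increase to $C_c(X)$, which is $Q_1$-dense in $D(Q)$ by the very definition of the closure. So $u_n\to u$ in form norm, hence in $\ell^2$, and your Stone--Weierstrass step then finishes the proof.

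This Galerkin argument is in essence the proof of the decreasing monotone convergence theorem in the closable case. It shows that the ``main obstacle'' you anticipate is nothing more than the density of $C_c(X)$ in the form domain. Your version, in turn, fits into a general theorem that would also handle non-closable limits.
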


\subsection{Davies-Gaffney-Grigoryan Lemma}

In this subsection, we prove a Davies-Gaffney-Grigoryan type lemma for magnetic Schrödinger operators. This extends the result of \cite{BHY1,BHY2}. 

Let us consider an \emph{intrinsic metric} $d$ with respect to  the graph $b$ over $( X,m),$ that is, a pseudo metric $d$ such that for all $x\in X$
\begin{align*}
    \frac{1}{m(x)}\sum_{y\in X}b(x,y)d(x,y)^2\leq 1.
\end{align*}
We define the \textit{jump size} $s$ of $d$ as \[s = \sup_{x\sim y}d(x,y),\] where $x\sim y$ means that the supremum runs over all pairs $x,y\in X$ with $b(x,y) >0.$ \\

{\bf Assumption.} We fix an intrinsic metric $d$ with finite jump size $s>0.$
\\

Let $\sigma(H_2)$ denote the $\ell^2$-spectrum of $H=H_2.$ Let \[\lambda_2=\inf\sigma(H_2).\]

\begin{theorem}[Davies-Gaffney-Grigoryan]\label{dgg}
    For all $f,g\in\ell^2$, $A=\mathrm{supp}\ f$, $B= \mathrm{supp}\ g$ one has
    \[\vert\langle\mathrm{e}^{-t H}f,g\rangle\vert\leq\mathrm{exp}\left(-\lambda_2 t-\frac{t}{s^2}\zeta \left(\frac{s d(A,B)}{t}\right)\right)\Vert f\Vert_2\Vert g\Vert_2,\]
    where \[\zeta(r) =  \left(r  \mathrm{arcsinh}\left({ r} \right)-\sqrt{1+ r^2}+1\right).\] 
\end{theorem}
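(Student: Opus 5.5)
We will follow the exponential weight (Davies) method, first on finite sets $K$ and then pass to the limit with Proposition~\ref{semiconv}. Since the bound only involves $\|f\|_2,\|g\|_2$ and the supports, a density argument lets us assume $f,g\in C_c(X)$. Fix a finite set $K$ from an exhaustion $K_n$ that contains $A\cup B$. Fix also a bounded Lipschitz function $\psi$ with respect to $d$; the natural choice is $\psi=\alpha\, (d(\cdot,A)\wedge R)$ with a parameter $\alpha>0$ and a large cutoff $R$. Set $u_t=\mathrm{e}^{-tH_K}f$ and consider the weighted energy
\[E(t)=\sum_K m\,\mathrm{e}^{2\psi}|u_t|^2 .\]
Differentiating gives $E'(t)=-2\,\mathrm{Re}\,Q_K(u_t,\mathrm{e}^{2\psi}u_t)$. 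The key pointwise estimate is the following: with $v=\mathrm{e}^{\psi}u$,
\[\mathrm{Re}\,Q_K(u,\mathrm{e}^{2\psi}u)\ \ge\ Q_K(v)-\frac12\sum_{x,y}b(x,y)\big(\cosh(\psi(x)-\psi(y))-1\big)\,|v(x)|\,|v(y)|\cdot 2 .\]
To derive it, expand $\nabla_o u\,\overline{\nabla_o(\mathrm{e}^{2\psi}u)}$ with $u=\mathrm{e}^{-\psi}v$. The magnetic phase $o$ appears only through $\mathrm{Re}(o(x,y)\overline{v(x)}v(y))$, which is bounded by $|v(x)||v(y)|$. The potential term $c$ is unaffected, since it is diagonal.

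Next, $|\psi(x)-\psi(y)|\le\alpha d(x,y)\le \alpha s$ for $x\sim y$, and $(\cosh r-1)/r^2$ is increasing. Together with intrinsicness and $|v(x)||v(y)|\le\frac12(|v(x)|^2+|v(y)|^2)$, the error term is bounded by
\[\frac{\cosh(\alpha s)-1}{s^2}\,\|v\|_2^2 .\]
We also have $Q_K(v)\ge\lambda_2\|v\|^2$, because $Q_K$ is the restriction of $Q$ to $C(K)$ and therefore $Q_K\ge \lambda_2$. Hence
\[E'(t)\le -2\Big(\lambda_2-\frac{\cosh(\alpha s)-1}{s^2}\Big)E(t),\]
and Gronwall yields $\|\mathrm{e}^{\psi}\mathrm{e}^{-tH_K}f\|_2\le \mathrm{e}^{-\lambda_2 t+t(\cosh(\alpha s)-1)/s^2}\|f\|_2$, using $\psi=0$ on $A$.

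Now estimate the pairing. We have
\[|\langle \mathrm{e}^{-tH_K}f,g\rangle|\le \|\mathrm{e}^{\psi}u_t\|_2\,\|\mathrm{e}^{-\psi}g\|_2\le \mathrm{e}^{-\alpha d(A,B)}\|\mathrm{e}^{\psi}u_t\|_2\|g\|_2,\]
taking $R\ge d(A,B)$. Let $K=K_n\to X$; Proposition~\ref{semiconv} transfers the bound to $\mathrm{e}^{-tH}$. Finally, optimize the exponent $-\alpha D+t(\cosh(\alpha s)-1)/s^2$ over $\alpha$, where $D=d(A,B)$. The minimizer satisfies $\sinh(\alpha s)=sD/t=:r$, and substituting it gives exactly $-\frac{t}{s^2}\big(r\,\mathrm{arcsinh}\, r-\sqrt{1+r^2}+1\big)=-\frac{t}{s^2}\zeta(r)$.

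The main obstacle is the pointwise algebra in the first step. One has to check carefully that, after the ground-state-type substitution, the cross terms produce exactly $\cosh(\psi(x)-\psi(y))-1$ times $\mathrm{Re}(o\,\overline{v(x)}v(y))$ and nothing worse. One also has to check that real parts are handled correctly, since $Q_K(u,\mathrm{e}^{2\psi}u)$ is not real. Concretely, we will split $\mathrm{e}^{2\psi(x)}+\mathrm{e}^{2\psi(y)}$ symmetrically when summing over $(x,y)$ and $(y,x)$. In addition, we must confirm that $Q_K\ge\lambda_2$ holds even though $c_K$ contains the extra killing term; this follows from $Q_K(\varphi)=Q(\iota_K\varphi)$.
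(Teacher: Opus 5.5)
Your proposal is correct and follows essentially the same route as the paper. The paper also uses an exponentially weighted energy on finite sets, controlled by the $\cosh$ estimate from intrinsicness and the jump size: its Lemma~\ref{impfinite} with $\omega=2\psi$, giving $\cosh(\alpha s)$. It then passes to the limit via Proposition~\ref{semiconv} and optimizes in $\alpha$. The only minor differences are that you use $Q_K(\varphi)=Q(\iota_K\varphi)\ge\lambda_2\|\varphi\|_2^2$ directly instead of the paper's $\lambda_2(K_n)\to\lambda_2$, and that you carry out the pairing and the optimization explicitly, where the paper defers to \cite{HL,BHY2}.
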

By choosing characteristic functions $1_x,1_y,$ of vertices $x,y\in X$, Theorem~\ref{dgg} yields the estimate 
\[\vert p_t(x,y)\vert\leq \frac{\mathrm{exp}(-\lambda_2 t-(t/s^2)\zeta(sd(x,y)/t))}{\sqrt{m(x)m(y)}}.\]

The proof consists of two main steps. First, we prove a  decay estimate of an energy functional for finite subsets $K\subseteq X,$ which extends the result in \cite[Lemma~3.2]{BHY2} to magnetic Schrödinger operators. Secondly, we use the semigroup convergence from Proposition~\ref{semiconv} to extend the result to the full space $X$.


The next lemma was shown for graphs in \cite{BHY1,BHY2}, for simplicial complexes with bounded Laplacians in \cite{HL} and ultimately goes back to results on manifolds \cite{Davies,Gaffney}. We extend it to magnetic Schrödinger operators on finite subsets. We say that a function $\omega:X\rightarrow\mathbb{R}$ is Lipschitz with Lipschitz constant $\kappa>0$ with respect to $d$ if 
$\vert \omega(x)-\omega(y)\vert\leq \kappa d(x,y)$ for all $x,y\in X$. Furthermore, for $U\subseteq X$, a function $u:[0,\infty)\times U\rightarrow\mathbb{R}$ is called a \emph{solution to the parabolic equation on $U$} if $t\mapsto u_t(x)$ is continuous on $[0,\infty)$, differentiable on $(0,\infty)$ and satisfies
\begin{equation*}
    -\mathcal{H} u_t(x) = u_t'(x)
\end{equation*}
for all $t>0$ and $x\in U$, where $u_t$ is extended by $0$ to $X\setminus U$ and $u_t'$ denotes the time derivative of $u_t.$

\begin{lemma}\label{impfinite}
    Let $K\subseteq X$ be a finite subset. Let $ \omega$ be a bounded real valued Lipschitz function with Lipschitz constant $\kappa>0$ and  let $u$ be a solution to the parabolic equation on $K$. Then,
    \[t\mapsto \exp{\left(2\lambda_2(K)t-\frac{2t}{s^2}(\cosh{\frac{\kappa s}{2}}-1)\right)}E_u(t)\]
is nonincreasing  in $t\geq 0,$ where $\lambda_2(K)$ is the smallest eigenvalue of $H_K$ and \[E_u(t) = \sum_{ K}m|u_t|^2 \mathrm{e}^{\omega}.\]
\end{lemma}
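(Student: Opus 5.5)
The plan is to differentiate $E_u$ and show
\[E_u'(t)\le -2\lambda_2(K)E_u(t)+\frac{2}{s^2}\Big(\cosh\frac{\kappa s}{2}-1\Big)E_u(t).\]
Monotonicity of the weighted quantity then follows from the product rule. Since $K$ is finite and $u_t$ is differentiable, we get
\[E_u'(t)=2\,\mathrm{Re}\sum_K m\,u_t'\,\overline{u_t}\,e^{\omega}=-2\,\mathrm{Re}\sum_K m\,\mathcal{H}u_t\,\overline{u_t e^{\omega}}.\]
Because $u_t$ vanishes outside $K$, Green's formula applies with the finitely supported test function $\varphi=u_te^{\omega}$. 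This gives
\[-\tfrac{1}{2}E_u'(t)=\mathrm{Re}\Big(\tfrac12\sum_{X\times X}b\,\nabla_o u\,\overline{\nabla_o(ue^{\omega})}+\sum_X c\,|u|^2e^{\omega}\Big).\]
Here we write $u=u_t$.

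The key step is a ground-state-type substitution $v=ue^{\omega/2}$, so that $E_u(t)=\|v\|_{\ell^2(K,m)}^2$. Write $u=ve^{-\omega/2}$ and $ue^{\omega}=ve^{\omega/2}$. For each pair $(x,y)$ we have
\[\nabla_o u\,\overline{\nabla_o(ue^{\omega})}=\big(v(x)e^{-\omega(x)/2}-o\,v(y)e^{-\omega(y)/2}\big)\big(\overline{v(x)}e^{\omega(x)/2}-\bar o\,\overline{v(y)}e^{\omega(y)/2}\big).\]
Expanding, the diagonal terms are $|v(x)|^2+|v(y)|^2$. The real part of the cross terms is $-2\cosh\big(\tfrac{\omega(x)-\omega(y)}{2}\big)\mathrm{Re}\big(o\,v(y)\overline{v(x)}\big)$, after symmetrizing the sum in $(x,y)$ using $b$ symmetric and $o(y,x)=\overline{o(x,y)}$. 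Comparing with $|\nabla_o v|^2=|v(x)|^2+|v(y)|^2-2\mathrm{Re}(o\,v(y)\overline{v(x)})$, the real part of the form equals
\[Q(v)-\tfrac12\sum b\,\Big(\cosh\tfrac{\nabla\omega}{2}-1\Big)2\,\mathrm{Re}\big(o\,v(y)\overline{v(x)}\big).\]
Since $2\,\mathrm{Re}(o v(y)\overline{v(x)})\le |v(x)|^2+|v(y)|^2$ and $\cosh-1\ge0$, the error term is bounded by
\[\sum_{x,y}b(x,y)\Big(\cosh\frac{\nabla\omega(x,y)}{2}-1\Big)|v(x)|^2.\]

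To bound this error term, use that $\cosh(r)-1$ is even and increasing in $|r|$, and that $|\nabla\omega|\le\kappa d(x,y)$ with $d(x,y)\le s$ whenever $b(x,y)>0$. The function $r\mapsto(\cosh r-1)/r^2$ is increasing in $|r|$, so
\[\cosh\frac{\kappa d}{2}-1\le\frac{d^2}{s^2}\Big(\cosh\frac{\kappa s}{2}-1\Big).\]
By intrinsicness, $\sum_y b(x,y)d(x,y)^2\le m(x)$, and hence the error is at most $\frac{1}{s^2}(\cosh\frac{\kappa s}{2}-1)\|v\|^2$. Here $v$ is supported in $K$, and the $y\notin K$ contributions are absorbed by $c_K$, so $Q(v)=Q_K(v)\ge\lambda_2(K)\|v\|^2$. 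Combining these gives
\[-\tfrac12E_u'\ge\Big(\lambda_2(K)-\tfrac{1}{s^2}\big(\cosh\tfrac{\kappa s}{2}-1\big)\Big)E_u,\]
which is the desired differential inequality. Multiplying by the exponential factor and differentiating then gives a nonpositive derivative on $(0,\infty)$. Continuity at $t=0$ extends the monotonicity to $[0,\infty)$.

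The main obstacle is the algebra of the cross terms in the presence of the magnetic phase $o$. Only the real parts survive, and they need careful symmetrization so that the $\cosh$ factor appears exactly. The remaining step, the elementary monotonicity of $(\cosh r-1)/r^2$, is routine.
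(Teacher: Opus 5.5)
Your proposal is correct and follows the same route as the paper: differentiate $E_u$, use Green's formula to obtain $-2\,\mathrm{Re}\,Q(u_t,\mathrm{e}^{\omega}u_t)$, rewrite this via $v=u_t\mathrm{e}^{\omega/2}$ as $Q_K(v)$ plus a cross term weighted by $\cosh(\nabla\omega/2)-1$, and bound that term using the Lipschitz property, the jump size and intrinsicness of $d$. The paper defers this last estimate to \cite[Lemma~3.2]{BHY2}, while you carry it out explicitly through $2\,\mathrm{Re}(o\,v(y)\overline{v(x)})\le|v(x)|^2+|v(y)|^2$ and the monotonicity of $(\cosh r-1)/r^2$; incidentally, the real-part identity for the cross terms already holds pointwise, so no symmetrization is needed there.
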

\begin{proof}
We follow the proof of \cite[Lemma~3.2]{BHY2} closely. Write $(f\otimes g)(x,y)=f(x)g(y)$, $x,y\in X$.
We take the time derivative of $t\mapsto E_u(t)$ and use the parabolic equation, as well as  Green's formula to obtain
\begin{multline*}
 \frac{d}{dt}E_u(t)   = 2\sum_{K}m\mathrm{Re}( \overline{u_t} u_t' )\mathrm{e}^{\omega} = -2\sum_{K}m\mathrm{Re}( \overline{u_t}H_K u_t)\mathrm{e}^{\omega} 
 =-2 \mathrm{Re}\, Q(u_t,\mathrm{e}^\omega u_t)
 \\ = -2Q(u_t \mathrm{e}^{{\omega}/{2}}) - 2 \sum_{K\times K}b \mathrm{Re}\left[\overline o\cdot(\mathrm{e}^{\omega\slash 2}u_t\otimes\mathrm{e}^{\omega\slash 2}\overline u_t )\right]\left(1-\cosh\left(\nabla \mathrm{e}^{\omega/2}\right)\right)\\
 \leq -2\lambda_2(K) E_u(t) + \frac{2}{s^2}(\cosh{\frac{\kappa s}{2}}-1)E_u(t),
\end{multline*}
where in the last step we used the Lipschitz property of $\omega$ and that $d$ is an intrinsic metric with jump size $s$ (cf. \cite[Lemma~3.2]{BHY2} for details of the specific estimates). This implies the lemma.
\end{proof}

With this preparation, we can now prove Theorem \ref{dgg}.

\begin{proof}[Proof of Theorem \ref{dgg}]
We will only prove that the function

\[t\mapsto\exp{\left(2\lambda_2t-\frac{2t}{s^2}(\cosh{\frac{\kappa s}{2}-1})\right)}E(t),\qquad t\geq 0,\]
is non-increasing, where \[E(t) = \Vert\mathrm{e}^{\omega\slash 2}S_2(t)u\Vert^2_2\]
for any initial condition $u\in\ell^2$ and $\omega$ any real Lipschitz function with Lipschitz constant $\kappa>0.$ Once we have achieved that, the rest of the proof follows exactly as in \cite[Theorem~1.2]{HL} or \cite[Theorem~1.1]{BHY2}.\\
First, take an exhaustion $K_n\subseteq X$, $n\in\mathbb{N}$, of finite sets. Because \[\lambda_2 = \inf_{\varphi\in C_c( X),\Vert\varphi\Vert=1}{Q(\varphi)} \] it easily follows that $\lambda_2 = \lim_{n\to\infty}\lambda_{2,n}$ with $\lambda_{2,n}=\lambda_2(K_n)$ being the smallest eigenvalue of $H_{K_n}$. Furthermore, by Proposition \ref{semiconv} we have that 
    \[ u_{n,t} = \iota_{K_n}\mathrm{e}^{-tH_{K_n}}\iota_{K_n}^*u\rightarrow S_2(t)u\quad\text{in}~\ell^2.\] Since $\mathrm{e}^\omega$ is bounded, we conclude that
\[\exp{\left(2\lambda_2 t-\frac{2t}{s^2}(\cosh{\frac{\kappa s}{2}}-1)\right)}E(t) = \lim_{n\to\infty}\exp{\left(2\lambda_{2,n} t-\frac{2t}{s^2}(\cosh{\frac{\kappa s}{2}}-1)\right)}E_{u_n}(t).\]
The desired monotonicity now follows from Lemma \ref{impfinite}.
\eat{We assume that $d(A,B)<\infty$ because otherwise the statement simply follows from the Cauchy-Schwarz inequality. The rest of the proof   now carries over verbatim from \cite[Theorem~1.2]{HuaLuo} or \cite[Theorem~1.1]{BauerHuaYau}.    }
\end{proof}

The following corollary is an immediate consequence of Theorem \ref{dgg} and provides explicit heat kernel estimates. It is inspired by \cite[Lemma~3.7]{BauerHuaKeller} but with a sharper constant $C_\beta$. Specifically, the constant $C_\beta$ becomes arbitrarily small for small enough $\beta>0.$ This will be useful in the upcoming section.

\begin{corollary}\label{DGGCorollary}
  For all $\beta>0$ there exists a constant $C_\beta = s^{-2}(\cosh(\beta s)-1)$ such that for all $t\geq 0$, $x,y\in X$,
    \[\vert p_t(x,y)\vert\leq \frac{1}{\sqrt{m(x)m(y)}}\mathrm{e}^{-\beta d(x,y)+C_\beta t-\lambda_2t}.\]
\end{corollary}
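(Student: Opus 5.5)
Starting from the pointwise kernel bound that follows from Theorem~\ref{dgg},
\[\vert p_t(x,y)\vert\leq \frac{1}{\sqrt{m(x)m(y)}}\exp\Big(-\lambda_2 t-\frac{t}{s^2}\zeta\Big(\frac{s d(x,y)}{t}\Big)\Big),\]
I will show that for every $\beta>0$ and every $r\ge 0$, $t>0$,
\[\frac{t}{s^2}\zeta\Big(\frac{sr}{t}\Big)\geq \beta r - \frac{t}{s^2}(\cosh(\beta s)-1).\]
Inserting this with $r=d(x,y)$ gives the claim for $t>0$. The case $t=0$ is immediate: $p_0(x,y)=\delta_{xy}/m(x)$, and the right-hand side equals $1/m(x)$ on the diagonal and is positive off it. If $d(x,y)=\infty$, then $d(A,B)=\infty$ and Theorem~\ref{dgg} forces $p_t(x,y)=0$, so that case is trivial as well.

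To prove the inequality, set $\rho = sr/t$ and $\alpha=\beta s$. After multiplying by $s^2/t$ it becomes
\[\zeta(\rho)\geq \alpha\rho-(\cosh\alpha-1)\qquad\text{for all }\rho\ge 0,\ \alpha>0,\]
which is a Fenchel--Young type inequality. The key observation is that $\zeta$ is the Legendre transform of $\alpha\mapsto\cosh\alpha-1$. Indeed, $\zeta'(\rho)=\mathrm{arcsinh}(\rho)$, $\zeta(0)=0$ and $\zeta$ is convex. For fixed $\alpha$, the function $g(\rho)=\zeta(\rho)-\alpha\rho+\cosh\alpha-1$ has derivative $\mathrm{arcsinh}(\rho)-\alpha$, so it is minimized at $\rho=\sinh\alpha$.

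It then suffices to evaluate $g$ there. Using $\sqrt{1+\sinh^2\alpha}=\cosh\alpha$,
\[g(\sinh\alpha)=\alpha\sinh\alpha-\cosh\alpha+1-\alpha\sinh\alpha+\cosh\alpha-1=0.\]
Hence $g\ge0$ on $[0,\infty)$. The minimizer $\sinh\alpha$ is nonnegative, so restricting to $\rho\ge0$ is harmless.

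There is no real obstacle in this argument. The only step needing care is recognizing the Legendre duality between $\zeta$ and $\cosh-1$, which yields exactly the constant $C_\beta=s^{-2}(\cosh(\beta s)-1)$. One should also note that this constant tends to $0$ as $\beta\to0$, since $\cosh(\beta s)-1\sim \beta^2s^2/2$; this is the sharpness remarked on before the statement.
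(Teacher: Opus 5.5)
Your proposal is correct and matches the paper's proof. The paper minimizes the convex function $f(r)=s^{-2}\zeta(sr)-\beta r$ using $f'(r)=s^{-1}\mathrm{arcsinh}(sr)-\beta$, and finds the minimum at $r_0=\sinh(\beta s)/s$ with value $-C_\beta$; this is your Fenchel--Young computation written in the rescaled variable $\rho=sr/t$. Your separate treatment of $t=0$, where $\frac{t}{s^2}\zeta(sd/t)$ is undefined, is a small point the paper leaves implicit.
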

\begin{proof}
    Consider the function \[f(r) = s^{-2}\left(rs\mathrm{arcsinh}(sr)-\sqrt{1+r^2s^2}+1\right)-\beta r\]
for $r\geq 0.$ Direct calculation shows that $f'(r) = s^{-1}\mathrm{arcsinh}(rs)-\beta$ and $f''(r)>0.$ Consequently, $f$ achieves its minimum in $r_0 = \sinh{(s\beta)}\slash s.$ Thus,
\[f(r)\geq f(r_0) = \frac{1-\cosh(\beta s)}{s^2}.\]
The statement follows from Theorem \ref{dgg} after observing, that $(t/s^2)\zeta(sd( x, y)/t) = t(f(d( x, y)\slash t)+\beta d( x, y)\slash t).$
\end{proof}

\section{Magnetic Schrödinger Semigroups on $\ell^p$}\label{s:semigroupsonlp}
In this section we want to investigate conditions under which the semigroup $S_2(t)=\mathrm{e}^{-tH_2}$ consistently extends to strongly continuous semigroups $S_p$ on $\ell^p$ for $p\in[1,\infty].$ Here, consistently means that $S_2(t) = S_p(t)$ on $\ell^2\cap\ell^p$ for all $t\geq 0.$ For graph Laplacians, which arise from Dirichlet forms, this is a well-known consequence of the Markov property \cite{DaviesLinOp, KLW}. However, for magnetic Schrödinger operators, this is in general not clear.\\
Let us recall some general facts about semigroups obtained in this way. Let $1\leq p<\infty$ and $-H_2$ and $-H_p$ be the generators of $S_2$ and $S_p$ respectively. If $p>1,$ then we also obtain  the strongly continuous semigroup $S_q$ on $\ell^q$ for $q$ being the H\"older dual of $p.$ This follows from the symmetry of $S_2,$ duality of $\ell^p$ and $\ell^q$ and density of $\ell^p\cap\ell^q$ in $\ell^q.$ Moreover, one has for all $t\geq 0$ that $S_p(t) = (S_q(t))^*$ and $H_p = (H_q)^*$ where $\ast$ indicates the Banach space adjoint of an operator. This also implies the relation of the spectra $\sigma(H_p) = \sigma(H_q)^*,$ where $\ast$ indicates complex conjugation, i.e., the spectra get flipped along the real axis. Whenever $S_1$ exists, we direcly define $S_\infty(t) = (S_1(t))^* $ on $\ell^\infty$ and $H_\infty = (H_1)^*.$ Here, we should note that $S_\infty$ will in general not be strongly continuous but only weak-$\ast$-continuous due to the fact that $\ell^1\cap\ell^\infty$ is not dense in $\ell^\infty.$ The semigroups turn out to be mutually consistent, i.e., for all $1\leq p,q\leq\infty$ one has $S_p = S_q$ on $\ell^p\cap\ell^q.$

We present two criteria that ensure  that $S_2$ consistently extends to $\ell^p.$ The first criterion, which can be viewed as a curvature condition in case of the Hodge Laplacian, is to assume a form bound on the negative part of the potential $c$. The second one is an assumption on the volume growth of distance balls with respect to an intrinsic metric $d$ of finite jump size.\\
Again, throughout this section, we assume that $(b,o,c)$ is a magnetic Schrödinger graph over $(X,m)$ such that $Q$ is closable in $\ell^2$ and positive.

\subsection{The Case of Form Bounded Potentials}

The following result is the main theorem of this subsection. Recall that $\lambda_2$ denotes the bottom of the $\ell^2$-spectrum of $ H_2$ and $c$ is said to be form bounded with bounds $M>0$ and $C\geq 0$ if $ \| \sqrt{c_-/m}\varphi\|_2^2\leq M Q(\varphi)+C\Vert\varphi\Vert^2_2$ for all $\varphi\in C_c(X)$. We denote the operator norm of an operator $T:\ell^p\rightarrow\ell^q$ by $\Vert T\Vert_{p,q}$ for $p,q\in[1,\infty].$

\begin{theorem}[Form bounded potentials]\label{formboundedpotential} Assume $c$ is form bounded with bounds $M>0$ and $C\geq 0$.
    For all $p\in I = [2\frac{M+1}{M}-2\frac{\sqrt{M+1}}{M},2\frac{M+1}{M}+2\frac{\sqrt{M+1}}{M}]$ and all $t\geq0$, one has that $S_2(t)$ is bounded as an operator on $\ell^p$ and \[\Vert S_2(t)\Vert_{p,p}\leq \mathrm{e}^{-D_pt},\]  where $D_p =\left(C_p-M(1-C_p)\right)\lambda_2-C(1-C_p),$ $C_p = 4\frac{p-1}{p^2}$.  In particular, $S_2$ extends consistently to strongly continuous semigroups $S_p$ on $\ell^p.$
\end{theorem}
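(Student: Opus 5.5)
We reduce to finite sets and do an $\ell^p$ energy estimate, using domination of the magnetic form by the form without magnetic field together with the standard Stroock–Varopoulos-type inequality. By Proposition~\ref{semiconv} and Fatou's lemma, it suffices to prove, for every finite $K\subseteq X$ and $u\in C(K)$,
\[\Vert \mathrm{e}^{-tH_K}u\Vert_p\leq \mathrm{e}^{-D_p^{(K)}t}\Vert u\Vert_p,\]
with a constant $D_p^{(K)}$ that involves $\lambda_2(K)\geq\lambda_2$ in place of $\lambda_2$. If $D_p$ is monotone in $\lambda_2$ in the right direction, i.e.\ the coefficient $C_p-M(1-C_p)$ is $\geq 0$, we can simply use $\lambda_2$ in place of $\lambda_2(K)$. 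Indeed, the interval $I$ is exactly the set of $p$ where $C_p(M+1)\geq M$, i.e.\ $4(p-1)(M+1)\geq Mp^2$; solving this quadratic gives the endpoints $2\frac{M+1}{M}\pm2\frac{\sqrt{M+1}}{M}$. So on $I$ the coefficient of $\lambda_2$ is nonnegative. Since $\mathrm{e}^{-tH_{K_n}}$ is then uniformly bounded on $\ell^p$, the strong $\ell^2$-limit yields pointwise convergence, and Fatou gives the bound for $S_2(t)$ on $\ell^2\cap\ell^p$, hence a bounded extension. Strong continuity on $\ell^p$ for $p<\infty$ follows from uniform boundedness near $t=0$ together with continuity on the dense set $C_c(X)$. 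On $C_c(X)$ we interpolate $\ell^p$ between $\ell^2$ and $\ell^{r}$ for some $r\in I$ on the far side of $p$ (possible when $p$ lies in the interior of $I$; at the endpoints we instead use the weak continuity that the uniform bound gives, and then the fact that weakly continuous semigroups are strongly continuous).

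For the finite-dimensional estimate, let $u_t=\mathrm{e}^{-tH_K}u$ and differentiate
\[\frac{d}{dt}\Vert u_t\Vert_p^p=-p\,\mathrm{Re}\,Q(u_t,|u_t|^{p-2}u_t).\]
For $p<2$ this needs care where $u_t$ vanishes; we regularise with $(|u|^2+\varepsilon)^{(p-2)/2}$. We split $Q=Q^{(0)}+\sum c_+|u|^p-\sum c_-|u|^p$ and handle the pieces as follows.

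The kinetic part. The pointwise inequality
\[\mathrm{Re}\big[(f(x)-o f(y))\overline{(|f|^{p-2}f(x)-o|f|^{p-2}f(y))}\big]\geq C_p\big(|f|^{p/2}(x)-|f|^{p/2}(y)\big)^2,\]
with $C_p=4(p-1)/p^2$, is the main obstacle. We would prove it by reducing to the real case via $|a-ob|\geq||a|-|b||$ and the corresponding phase computation: write $f(x)=re^{i\alpha}$ and $o f(y)=\rho e^{i\beta}$. The expression becomes $r^p+\rho^p-(r\rho^{p-1}+r^{p-1}\rho)\cos(\alpha-\beta)$. This is minimised at $\cos=1$, which reduces it to the classical real inequality $(a-b)(a^{p-1}-b^{p-1})\geq C_p(a^{p/2}-b^{p/2})^2$.

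The potential part. The positive part of $c$ contributes at least $C_p\sum c_+|u|^p$, since $C_p\leq1$. Hence, with $v=|u_t|^{p/2}$,
\[\mathrm{Re}\,Q(u_t,|u_t|^{p-2}u_t)\geq C_p Q^{(0)}_{+}(v)-\sum c_-v^2,\]
where $Q^{(0)}_+$ denotes the form without magnetic field and with potential $c_+$. Writing $\sum c_-v^2=C_p\sum c_-v^2+(1-C_p)\sum c_-v^2$ gives $C_p\,Q^{(0)}(v)-(1-C_p)\sum c_- v^2$, where $Q^{(0)}$ is the form without magnetic field and with the full potential $c$. We then apply the form bound to $v$, where we need it for $Q^{(0)}$; by diamagnetic domination $Q(|\varphi|)$-type comparisons this should reduce to the stated bound after noting $Q^{(0)}(v)\geq Q(v)$ with trivial phase. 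This gives
\[\geq \big(C_p-M(1-C_p)\big)Q(v)-C(1-C_p)\Vert v\Vert_2^2\geq D_p\Vert u_t\Vert_p^p,\]
using $Q(v)\geq\lambda_2(K)\Vert v\Vert_2^2$ and $\Vert v\Vert_2^2=\Vert u_t\Vert_p^p$. Gronwall then yields $\Vert u_t\Vert_p^p\leq\mathrm{e}^{-pD_pt}\Vert u\Vert_p^p$.

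The delicate point is making the comparison between magnetic and non-magnetic forms for $v$, which is where the phase inequality enters. If it fails, we would instead apply the form bound in the form $\sum c_-v^2\leq MQ^{(0)}(v)+\dots$, which holds because $Q(v)\leq Q^{(0)}(v)$ is false in general. So we must ensure the form bound is used for $Q^{(0)}$ via Kato's inequality $Q(\varphi)\geq Q^{(0)}(|\varphi|)$, and choose $v$ accordingly.
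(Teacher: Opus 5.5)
There is a genuine gap in your energy estimate. The reduction to finite $K$, the Fatou argument and the strong continuity part are fine. The paper gets strong continuity more directly via Lemma~\ref{strongcont}, but your interpolation/weak-continuity route also works.

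The problem is that passing to $v=|u_t|^{p/2}$ discards the phase of $u_t$. Your resulting lower bound $C_pQ^{(0)}(v)-(1-C_p)\sum c_-v^2$ is stated for the \emph{non-magnetic} form $Q^{(0)}$. Both hypotheses you then invoke, the form bound and $Q\geq\lambda_2$, are stated for the magnetic form $Q$, and they do not transfer:
\begin{itemize}
\item For $v\geq0$ one has $Q(v)\geq Q^{(0)}(v)$, since $\mathrm{Re}\,o\leq1$.
\item Kato's inequality $Q(\varphi)\geq Q^{(0)}(|\varphi|)$ points the same way. It can only give $\inf\sigma(H^{(0)})\leq\lambda_2$.
\item So it never yields $\sum c_-v^2\leq MQ^{(0)}(v)+C\|v\|_2^2$, nor $Q^{(0)}(v)\geq\lambda_2\|v\|_2^2$.
\end{itemize}
In fact $Q^{(0)}$ need not even be nonnegative. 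On a triangle with $b\equiv m\equiv1$, $o\equiv-1$, $c\equiv-1$, the magnetic form has eigenvalues $3,0,0$, while $Q^{(0)}(1)=-3$. Such frustrated configurations are exactly what occurs for Hodge Laplacians with negative Forman curvature. Hence the last chain of inequalities in your proposal does not follow. Your pointwise inequality with the non-magnetic right-hand side is true, but it is too weak for the purpose.

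The missing idea is to keep the phase. Take $v=u_t|u_t|^{(p-2)/2}$, so that still $|v|^2=|u_t|^p$ and $\|v\|_2^2=\|u_t\|_p^p$. Then prove the fully magnetic inequality of Lemma~\ref{pFormEstimate}:
\[
\mathrm{Re}\bigl(\nabla_o f\,\overline{\nabla_o(f|f|^{p-2})}\bigr)\geq C_p\bigl|\nabla_o\bigl(f|f|^{(p-2)/2}\bigr)\bigr|^2 .
\]
Your own phase computation proves this. With $f(x)=re^{i\alpha}$ and $of(y)=\rho e^{i\beta}$, the difference of the two sides equals
\[
(1-C_p)(r^p+\rho^p)-\cos(\alpha-\beta)\bigl(r\rho^{p-1}+r^{p-1}\rho-2C_p(r\rho)^{p/2}\bigr).
\]
The bracket is nonnegative by AM--GM together with $C_p\leq1$. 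So the worst case is $\cos=1$, which is exactly the real inequality $(r-\rho)(r^{p-1}-\rho^{p-1})\geq C_p(r^{p/2}-\rho^{p/2})^2$. Adding the $c_+$ terms (again using $C_p\leq 1$) gives
\[
\mathrm{Re}\,Q(u_t,u_t|u_t|^{p-2})\geq C_pQ(v)-(1-C_p)\|\sqrt{c_-/m}\,v\|_2^2 ,
\]
now with the magnetic $Q$. The form bound and $Q(v)\geq\lambda_2\|v\|_2^2$ then apply directly to $v\in C_c(X)$, and they yield $D_p\|u_t\|_p^p$.
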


Theorem \ref{formboundedpotential} is inspired by \cite[Proposition 3.3]{Magniez}, which itself relies on ideas from \cite{LiSe}.
Observe that $C_p = 4\frac{p-1}{p^2}\in [0,1]$ and, for $p\in I$, we have  $C_p-M(1-C_p)\geq 0$. 
 For the proof, let us denote the positive part of the  quadratic form $Q$ on $C_c(X)$ by
 \[q = {Q}+\frac{c_-}{m}.\]

\begin{lemma}\label{pFormEstimate}
    Let $  f\in C_c(X)$ and 
    $p\in [1,\infty)$. Then,
     \[\mathrm{Re}\, q(f,f\vert  f\vert^{p-2})\geq 4\frac{p-1}{p^2} q( f\vert  f\vert^{(p-2)\slash 2}).\]
\end{lemma}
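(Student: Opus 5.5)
The plan is to reduce the inequality to a pointwise estimate, handling the edge terms and the potential term separately. Since $f\in C_c(X)$, all sums are finite. We may also write
\[q(\varphi,\psi)=\tfrac12\sum_{X\times X} b\,\nabla_o\varphi\,\overline{\nabla_o\psi}+\sum_X c_+\varphi\overline{\psi},\]
because $q=Q+c_-/m$ and $c=c_+-c_-$. We use the convention $f|f|^{p-2}=0$ wherever $f=0$, which makes sense also for $p<2$. For $p=1$ we have $C_1=0$, and the claim reduces to $\mathrm{Re}\,q(f,f|f|^{-1})\ge 0$; this will drop out of the same pointwise argument, so we may think of $p>1$.

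\emph{Potential term.} Here $\mathrm{Re}\, c_+ f\,\overline{f|f|^{p-2}}=c_+|f|^p=c_+\big|f|f|^{(p-2)/2}\big|^2$. Since $C_p=4\frac{p-1}{p^2}\le 1$, this term is at least $C_p$ times the corresponding term on the right-hand side.

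\emph{Edge terms.} It suffices to show, for every pair $x,y$ and with $a=f(x)$, $w=o(x,y)f(y)$, that
\[\mathrm{Re}\big[(a-w)\overline{(a|a|^{p-2}-w|w|^{p-2})}\big]\ \ge\ C_p\,\big|a|a|^{(p-2)/2}-w|w|^{(p-2)/2}\big|^2 .\]
This is legitimate because $|o|=1$ gives $o(x,y)f(y)|f(y)|^{p-2}=w|w|^{p-2}$, so $\nabla_o$ of each power-type function becomes exactly a difference in $a$ and $w$. The magnetic potential thus disappears and we are left with an inequality for two complex numbers.

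Write $a=re^{i\alpha}$, $w=\rho e^{i\beta}$ and $\gamma=\cos(\alpha-\beta)$. Expanding both sides gives
\[\mathrm{LHS}=r^p+\rho^p-(r\rho^{p-1}+r^{p-1}\rho)\gamma,\qquad \mathrm{RHS}=C_p\big(r^p+\rho^p-2(r\rho)^{p/2}\gamma\big).\]
The difference $\mathrm{LHS}-\mathrm{RHS}$ is affine in $\gamma\in[-1,1]$. Its slope is $-(r\rho^{p-1}+r^{p-1}\rho)+2C_p(r\rho)^{p/2}$. By AM–GM, $r\rho^{p-1}+r^{p-1}\rho\ge 2(r\rho)^{p/2}$, and $C_p\le1$, so the slope is $\le 0$. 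Hence it suffices to treat $\gamma=1$, which is the real nonnegative case
\[(r-\rho)(r^{p-1}-\rho^{p-1})\ \ge\ C_p\,(r^{p/2}-\rho^{p/2})^2,\qquad r,\rho\ge0 .\]

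This last step is the main, though classical, obstacle. Assume $\rho<r$ and write
\[r^{p/2}-\rho^{p/2}=\tfrac p2\int_\rho^r s^{p/2-1}\,ds,\qquad r^{p-1}-\rho^{p-1}=(p-1)\int_\rho^r s^{p-2}\,ds .\]
Cauchy–Schwarz gives $\big(\int_\rho^r s^{p/2-1}\,ds\big)^2\le (r-\rho)\int_\rho^r s^{p-2}\,ds$. Combining these yields exactly the constant $(p-1)\cdot\frac{4}{p^2}=C_p$. The case $\rho=0$ is handled directly, or by continuity.

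Finally, summing the edge inequality against $\tfrac12 b$ and adding the potential term gives the lemma.
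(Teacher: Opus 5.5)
Your proof is correct and follows the same route as the paper: a pointwise estimate in which the magnetic phase is removed because the coefficient of $\mathrm{Re}(a\bar w)$ is nonnegative (your AM--GM slope argument is exactly the paper's rewriting of that coefficient as $(\nabla|f|^{(p-2)/2})^2+2(1-C_p)|f(x)f(y)|^{(p-2)/2}$), followed by a scalar inequality, with the potential term handled via $C_p\le 1$. The only difference is that you prove the scalar inequality $(r-\rho)(r^{p-1}-\rho^{p-1})\ge C_p(r^{p/2}-\rho^{p/2})^2$ yourself by Cauchy--Schwarz on integral representations, whereas the paper rewrites it in the equivalent form $\left(\frac{p-2}{p}\right)^2(1-t^{p/2})^2\ge t(1-t^{(p-2)/2})^2$ and cites \cite[Lemma 2.1]{LiSe} for it.
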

\begin{proof}
    We deduce the statement from the following pointwise estimate.
    Fix $x,y\in X$ and let $\vert  f(x)\vert\geq\vert  f(y)\vert.$ Assume that $  f(x)\neq 0.$ Let $C_p = 4\frac{p-1}{p^2}.$ Note that $1-C_p = (\frac{p-2}{p})^2.$ Recall that $\nabla_{o} f=f(x)-o(x,y)f(y)$ and $\nabla f=f(x)-f(y)$. Then
\begin{align*}
\mathrm{Re}(\nabla_{o} f& \overline{\nabla_{o}(f\vert  f\vert^{p-2})})   - C_p\left|\nabla_{o}( f \vert  f\vert^{\frac{p-2}{2}})\right|^2\\
    =& \,(1-C_p)\left(\vert  f(x)\vert^p+\vert  f(y)\vert^p\right)
    \\& -\mathrm{Re}\left(o(x,y) f(y)\overline{f(x)}\right)\left(\vert f(x)\vert^{p-2}+\vert f(y)\vert^{p-2}-2C_p\vert f(x)f(y)\vert^{\frac{p-2}{2}} \right)\\
\geq&\, (1-C_p)\left(\vert  f(x)\vert^p+\vert  f(y)\vert^p\right)
    \\& - \left|  f(x)  f(y)\right|\left((\nabla |  f|^{\frac{p-2}{2}})^2 + 2(1-C_p)\vert  f(x)  f(y)\vert^{\frac{p-2}{2}} \right)\\
= &\,(1-C_p)\left(\nabla \vert  f \vert^{\frac{p}{2}} \right)^2-\vert  f(x)\vert\vert  f(y)\vert\left(\nabla \vert  f \vert^{\frac{p-2}{2}} \right)^2\\
=&\,  \vert  f(x)\vert^p\left(\left(\frac{p-2}{p}\right)^2\left(1-\left(\frac{\vert  f(y)\vert}{\vert  f(x)\vert}\right)^\frac{p}{2}\right)^2-\frac{\vert  f(y)\vert}{\vert  f(x)\vert}\left(1-\left(\frac{\vert  f(y)\vert}{\vert  f(x)\vert}\right)^{\frac{p-2}{2}}\right)^2\right)\\
\geq &\,0,
\end{align*}
where we conclude the positivity in the last step from the inequality
    \[\left(\frac{p-2}{p}\right)^2(1-t^{p\slash 2})^2\geq t(1-t^{\frac{p-2}{2}})^2\]
 for all $t\in [0,1]$, $p\in [1,\infty),$ which follows from \cite[Lemma 2.1, inequality~(2.4)]{LiSe}.
 
 The case $  f(x) = 0$ is trivial. Multiplying with $b(x,y)$, adding the positive part of the potential on both sides and summing over $x,y$ yields the statement.
\end{proof}

\begin{remark}
    One may prove in a very similar fashion to Lemma \ref{pFormEstimate} the estimate
    \[\kappa(p)q( f\vert  f\vert^{(p-2)\slash 2})\geq\mathrm{Re}\, q(f,f\vert  f\vert^{p-2})\]
    for all $f\in C_c(X)$ and $p\in [1,\infty),$ where \[\kappa(p) = \sup_{t\in(0,1)}\frac{(1+t)(1+t^{p-1})}{(1+t^{p\slash 2})^2}.\] The proof uses the estimate \[t(1-t^{\frac{p-2}{2}})^2\leq(\kappa(p)-1)(1+t^{p\slash 2})^2\] for all $t\in [0,1],$ which one deduces from \cite[Lemma 2.1, inequality~(2.5)]{LiSe}.
\end{remark}

We also note the following extension lemma for strongly continuous semigroups on $\ell^2$ which is of independent interest.

\begin{lemma}\label{strongcont}
    Let $ (T({t})) $ be a strongly continuous semigroup on $ \ell^{2} $ such that \[ \|T({t})f\|_{p} \leq e^{-At}\|f\|_{p} \] for all $ f \in C_c(X) $ and some $ A \in \mathbb{R} $ and some $ 1\leq p <\infty $. Then $ (T({t})) $ extends to a strongly continuous semigroup on $ \ell^{p} $ with the same bound.
\end{lemma}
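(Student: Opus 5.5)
Since $C_c(X)$ is dense in $\ell^p$ for $1\le p<\infty$, I first extend each $T(t)$ by continuity. For fixed $t\ge 0$, the map $T(t)|_{C_c(X)}$ is bounded in the $\ell^p$-norm by $\mathrm{e}^{-At}$. It therefore extends uniquely to an operator $T_p(t)\in\mathcal{B}(\ell^p)$ with $\|T_p(t)\|_{p,p}\le \mathrm{e}^{-At}$.

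\textbf{Consistency.} I need $T_p(t)f=T(t)f$ for $f\in\ell^2\cap\ell^p$. Take $f\in\ell^2\cap\ell^p$ and truncate it, $f_n=f1_{K_n}$, along an exhaustion $K_n$ of $X$ by finite sets. Then $f_n\to f$ both in $\ell^2$ and in $\ell^p$ by dominated convergence. Consequently $T(t)f_n\to T(t)f$ in $\ell^2$ and $T(t)f_n\to T_p(t)f$ in $\ell^p$. Both kinds of convergence imply pointwise convergence, because $m(x)>0$ for every $x$. Hence the two limits agree pointwise, and so $T_p(t)f=T(t)f$.

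\textbf{Semigroup law.} The identity $T_p(t+s)=T_p(t)T_p(s)$ holds on $C_c(X)$. Indeed, for $\varphi\in C_c(X)$ we have $T(s)\varphi\in\ell^2\cap\ell^p$, so consistency gives $T_p(t)T(s)\varphi=T(t)T(s)\varphi=T(t+s)\varphi$. Both sides are bounded on $\ell^p$, so density extends the identity to all of $\ell^p$. Also $T_p(0)=I$.

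\textbf{Strong continuity.} This is the main step. By the uniform bound $\|T_p(t)\|\le \mathrm{e}^{|A|}$ for $t\in[0,1]$ and an $\varepsilon/3$ argument, it suffices to prove $T_p(t)\varphi\to\varphi$ in $\ell^p$ as $t\to0$ for $\varphi\in C_c(X)$, in fact for $\varphi=1_x$.

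Strong continuity in $\ell^2$ gives pointwise convergence $T(t)\varphi\to\varphi$. Pointwise convergence alone does not give $\ell^p$ convergence, so I combine it with the norm bound:
\[
\limsup_{t\to0}\|T_p(t)\varphi\|_p\le\|\varphi\|_p .
\]
Fatou's lemma gives $\|\varphi\|_p\le\liminf_{t\to0}\|T_p(t)\varphi\|_p$, so in fact $\|T_p(t)\varphi\|_p\to\|\varphi\|_p$. Pointwise convergence together with convergence of norms yields norm convergence in $\ell^p$ for $1\le p<\infty$, by the Brezis–Lieb lemma or Radon–Riesz (for $p=1$ this is Scheffé's lemma).

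For this I pass to sequences $t_n\to0$: the argument applies to every sequence, so the conclusion holds for the full limit $t\to 0$. This gives strong continuity at $0$, and with the semigroup law strong continuity everywhere. The bound $\|T_p(t)\|_{p,p}\le \mathrm{e}^{-At}$ is inherited from the extension.
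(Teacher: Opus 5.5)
Your proof is correct and follows the paper's route. Both arguments extend $T(t)$ by density, obtain the semigroup law by density, and reduce strong continuity to finitely supported $f$, where pointwise convergence together with $\|T(t)f\|_p\le \mathrm{e}^{-At}\|f\|_p$ forces $\ell^p$-convergence. The paper proves that last step by hand instead of citing Brezis--Lieb/Riesz--Scheff\'e: it splits the sum into $K=\mathrm{supp}\,f$, a finite sum where no Fatou is needed, and $X\setminus K$, controlled by $\|T(t)f\|_p^p-\sum_K m|T(t)f|^p$.

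Your explicit consistency check $T_p(t)=T(t)$ on $\ell^2\cap\ell^p$ is a worthwhile addition. The paper's remark that the semigroup law holds ``trivially'' on $C_c(X)$ tacitly relies on it, since $T(s)\varphi$ is generally not finitely supported.
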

\begin{proof}
By density of $C_c(X)$ in $\ell^p$ we can extend $T(t)$ for all $t\geq 0$ to a bounded operator $T_p(t)$ on $\ell^p.$ The family of operators $(T_p(t))$ then satisfies the semigroup property $T_p(s+t) = T_p(s)T_p(t)$ trivially, as it holds on the dense subspace $C_c(X).$ 

For the strong continuity, it is sufficient to show that      $\lim_{t\rightarrow0^+}\Vert T_p(t)  f-  f\Vert_p=0$ for all $  f\in C_c(X)$ since $C_c(X)$ is dense in $\ell^p$ and we already know that $T_p$ is exponentially bounded. Let $  f\in C_c(X)$ and $K= \mathrm{supp}\,  f.$ As $T_p = T$ on $C_c(X)$ and $T$ is strongly continuous, we conclude from finiteness of $K$ and pointwise convergence  that 
 \[\lim_{t\rightarrow 0^+}\sum_K m\vert T_p(t)  f-f\vert^{p}=\sum_K m\lim_{t\rightarrow 0^+}\vert T(t)  f-f\vert^{p}=0 .\] 
By the same reasoning, we also have that
    \(\sum_K m\vert T_p(t)  f\vert^{p}\to \sum_K m\vert  f\vert^p \) as ${t\rightarrow 0^+}$. So, for the remaining part of the sum, we use the triangle inequality and the bound on $T_p(t)$ to get 
    \begin{align*}
        \sum_{X\setminus K}m\vert T_p(t)  f-  f\vert^p & = \sum_{X\setminus K  }m\vert T(t)  f\vert^p 
        =\Vert T(t)  f\Vert_p^{p} - \Vert   f\Vert_p^{p}  -\! \sum_{K   }m(\vert T_p(t)  f\vert^p-\vert   f\vert^p) 
        \\
        &\leq (\mathrm{e}^{-At}-1)\Vert  f\Vert_p^{p} -\sum_{K}m(\vert T_p(t)  f\vert^p-\vert   f\vert^p) .
    \end{align*}
By the argument above, the right hand side tends to zero for  $t\to0.$   
\end{proof}

We are now in the position to prove Theorem \ref{formboundedpotential}.

\begin{proof}[Proof of Theorem \ref{formboundedpotential}]
    Let $  f \in C_c(X)$ and $\mathrm{supp}\,  f\subseteq K$ for some finite set $K\subseteq X$ and let $  f_t = \mathrm{e}^{-tH_K}  f,$  $t\geq 0$, with $H_K$ defined above Proposition~\ref{semiconv}. Note that $  f_t' = -H_K  f_t.$
    We identify $  f_t\in\ell^p(K,m)$ as elements in $\ell^p$ via extension by zero to $ X.$ Then, for all $p\in I =  [2\frac{M+1}{M}-2\frac{\sqrt{M+1}}{M},2\frac{M+1}{M}+2\frac{\sqrt{M+1}}{M}],$ we have $ p\neq 1,\infty  $ since $ M>0 $ and we obtain
    \begin{multline*}
        -\frac{1}{p}\frac{d}{dt}\Vert  f_t\Vert_p^p =\sum_{K}m\vert  f_t\vert^{p-2}\mathrm{Re}(\bar f_t H_K f_t)
         =\mathrm{Re} \,{Q}(f_t,f_t\vert  f_t\vert^{p-2})\\
        \geq \left[\left(C_p-M(1-C_p)\right)\lambda_2-C(1-C_p)\right]\Vert  f_t\Vert_p^p = D_p\Vert  f_t\Vert_p^p,
    \end{multline*}
    where $D_p =\left[\left(C_p-M(1-C_p)\right)\lambda_2-C(1-C_p)\right]$, $C_p = 4\frac{p-1}{p^2}$ and $\lambda_2$ is the bottom of the $\ell^2$-spectrum of the operator $ H_2.$
Here, the inequality follows from Lemma~\ref{pFormEstimate}, the form bound of $c_-$ and the fact that 
 $\left(C_p-M(1-C_p)\right)\geq 0$ for all $p\in I.$

 What we have shown is equivalent to 
 \[\frac{d}{dt}\ln({\Vert  f_t\Vert}_p^p)\leq -{p}D_p.\] Integrating both sides from $0$ to $t$ and taking exponentials yields
 \[\Vert  f_t\Vert^p_p\leq \mathrm{e}^{-tpD_p}\Vert  f\Vert^p_p.\] Now let $K_n$, $n\geq 1$, be an exhaustion of $ X$ with finite subsets. We infer from Proposition \ref{semiconv} that $\mathrm{e}^{-t{H}_{K_n}}  f$ converges pointwise for all $t\geq 0$ to $S_2(t)  f$ as $n\rightarrow\infty.$ Thus, by Fatou's lemma we get 
 \[\Vert S_2(t)  f\Vert_p^p\leq \liminf_{n\to \infty}\Vert \mathrm{e}^{-t{H}_{K_n}}  f\Vert_p^p\leq\mathrm{e}^{-tpD_p}\Vert  f\Vert_p^p.\]
 The fact that $S_2$ extends to a strongly continuous semigroup on $\ell^p$ follows from density of $C_c(X)$ in $\ell^p$ and  Lemma~\ref{strongcont}.
 \end{proof}

Theorem \ref{formboundedpotential} excludes the case where $p = 1.$ However, using Kato's inequality, we can show that the statement still holds under the stronger assumption of a lower bounded potential, which corresponds to a form bound $M=0.$

\begin{theorem}[Bounded potentials]\label{boundedpotential}
    Suppose the potential $c\slash m$ is bounded from below, i.e., there exists $K\in\mathbb{R}$ such that $c\slash m\geq -K.$ Then, for all $p\in[1,\infty]$ and $t\ge 0$,
        \[\Vert S_2(t)   \Vert_{p,p}\leq \mathrm{e}^{Kt} .\]
In particular, $S_2$ extends to a strongly continuous semigroup for $1\le p<\infty$ and a weak-$\ast$-continuous semigroup for $p=\infty.$
\end{theorem}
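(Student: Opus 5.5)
The plan is to reduce to finite sets and compare with a Markov semigroup via Kato's inequality (domination). Fix a finite $K\subseteq X$. On $\ell^2(K,m)$ consider $H_K$ together with the non-magnetic comparison operator
\[L_K f(x)=\frac{1}{m(x)}\sum_{y\in K}b(x,y)(f(x)-f(y))+\frac{1}{m(x)}\bigl(c_K(x)+Km(x)\bigr)f(x),\]
which is $H_K+K$ with the magnetic potential removed. Since $c_K\ge c\ge -Km$, the potential of $L_K$ is nonnegative. Hence $L_K$ is a graph Laplacian with nonnegative killing term, i.e.\ the generator of a Dirichlet form on the finite set $K$. Consequently $\mathrm{e}^{-tL_K}$ is positivity preserving and sub-Markovian, so $\Vert \mathrm{e}^{-tL_K}\Vert_{p,p}\le 1$ for all $p\in[1,\infty]$.

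The key step is the pointwise domination
\[|\mathrm{e}^{-t(H_K+K)}f|\le \mathrm{e}^{-tL_K}|f|,\qquad f\in C(K).\]
On a finite set I would prove this from the Trotter product formula, or more directly from the Dyson/Neumann series. Write $H_K+K=D-A_o$ and $L_K=D-A$, where $D$ is the common diagonal part (recall $|o|=1$), $A_o(x,y)=b(x,y)o(x,y)/m(x)$, and $A(x,y)=b(x,y)/m(x)\ge 0$. Then
\[\mathrm{e}^{-t(D-A_o)}=\mathrm{e}^{-tD}\sum_n \text{(iterated integrals of products of } \mathrm{e}^{-sD}A_o\mathrm{e}^{-s'D}\cdots),\]
and replacing every entry of $A_o$ by its modulus gives entrywise domination $|\mathrm{e}^{-t(H_K+K)}(x,y)|\le \mathrm{e}^{-tL_K}(x,y)$. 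Equivalently, one can simply pick $\mu$ large so that $\mu-D\ge0$ and expand $\mathrm{e}^{-t(H_K+K)}=\mathrm{e}^{-t\mu}\mathrm{e}^{t(\mu-D+A_o)}$ as a power series, comparing it termwise with $\mathrm{e}^{-t\mu}\mathrm{e}^{t(\mu-D+A)}$, all of whose entries are nonnegative. This is Kato's inequality in semigroup form. It then gives $\Vert \mathrm{e}^{-tH_K}f\Vert_p\le \mathrm{e}^{Kt}\Vert f\Vert_p$ for all $p\in[1,\infty]$.

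To pass to $X$, take an exhaustion $K_n$ and $f\in C_c(X)$. Proposition~\ref{semiconv} gives $\ell^2$-convergence, hence pointwise convergence $\mathrm{e}^{-tH_{K_n}}f\to S_2(t)f$. Fatou's lemma (or a pointwise supremum argument for $p=\infty$) then yields $\Vert S_2(t)f\Vert_p\le \mathrm{e}^{Kt}\Vert f\Vert_p$ on $C_c(X)$. For $1\le p<\infty$, Lemma~\ref{strongcont} provides the strongly continuous extension with the same bound. For $p=\infty$, I would use duality as in the beginning of the section: $S_\infty(t)=S_1(t)^*$, with norm $\le \mathrm{e}^{Kt}$, and this is weak-$\ast$ continuous. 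The bound also holds directly on $\ell^2\cap\ell^\infty$ and is consistent with this definition.

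I expect the main obstacle to be the domination step. The subtleties are checking that the diagonal parts of $H_K+K$ and $L_K$ coincide, which requires including the boundary term $\sum_{y\notin K}b(x,y)$ in both, and ensuring the killing term is nonnegative so that $\mathrm{e}^{-tL_K}$ is indeed an $\ell^\infty$-contraction. On finite $K$ everything is a matrix, so the power-series comparison should settle it cleanly.
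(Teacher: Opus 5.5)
Your proof is correct, but it obtains the key domination differently from the paper.

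\textbf{The paper's route.} It works directly on the infinite graph. It imports the kernel bound $|p_t(x,y)|\le \mathrm{e}^{Kt}\tilde p_t(x,y)$ from \cite[Theorem~5.2(a)]{GKS}, where $\tilde p_t$ is the heat kernel of the free Laplacian of $(b,1,0)$. The Markov property $\sum_y m(y)\tilde p_t(x,y)\le 1$ then gives the $\ell^1$ bound on $C_c(X)$. Lemma~\ref{strongcont} handles $p=1$, duality handles $p=\infty$, and Riesz--Thorin covers the intermediate $p$.

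\textbf{Your route.} You prove the domination by hand on each finite $K_n$, comparing the power series of $\mathrm{e}^{t(\mu-D+A_o)}$ and $\mathrm{e}^{t(\mu-D+A)}$ entrywise. The sub-Markov property of the killed operator $L_{K_n}$ then gives every $\ell^p$ bound at once. You pass to $X$ via Proposition~\ref{semiconv} and Fatou, which is the same template the paper uses for Theorem~\ref{formboundedpotential}.

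\textbf{Comparison.} Your argument is self-contained and elementary. It needs no Feynman--Kac--It\^o machinery, no check of the hypotheses of the cited theorem, and no interpolation at the end. The paper's argument is shorter given the citation, and it records the stronger pointwise kernel domination on all of $X$. Your finite-volume comparison, together with monotonicity in the killing term and Proposition~\ref{semiconv}, would in fact recover a kernel bound of the same type.

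One cosmetic point: you use $K$ both for the finite set and for the lower-bound constant, for instance in $c_K(x)+Km(x)$. Rename one of them.
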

\begin{proof}
    Under the given assumptions, we may apply \cite[Theorem~5.2 (a)]{GKS} to obtain for all $x,y\in X$,    $t\geq 0$, the estimate 

    \[\vert p_t(x,y)\vert\leq \mathrm{e}^{Kt}\tilde p_t(x,y),\]
    where $\tilde p_t$ is the kernel of the semigroup on $\ell^2$ generated by $-\Delta,$ the Laplacian associated to the graph $(b,1,0)$ over $(X,m)$. However, $- \Delta$ generates a Markov semigroup \cite[Proposition~2.6]{KLW} and so for all $x\in X$
    \[\sum_{y\in X} m(y)\tilde p_t(x,y)\leq 1.\] Thus, for $ f\in C_c(X)$,
    \[\Vert S_2(t) f\Vert_1\leq\mathrm{e}^{Kt}\sum_{y\in X} m(y)\vert f(y)\vert\sum_{x\in X} m(x)\tilde p_t(x,y)\leq \mathrm{e}^{Kt}\Vert f\Vert_1.\] By density of $C_c(X)$ in $\ell^1$, we get that $S_2$ extends consistently to a semigroup $S_1$ on $\ell^1.$ 
The statement follows now for $p = 1$ from Lemma~\ref{strongcont}. By duality, we obtain a weak-$\ast$-continuous semigroup $S_\infty$ on $\ell^\infty$ with the same bound which is consistent with $S_2$ on $\ell^1\cap \ell^\infty$. Interpolation then yields the statement for all $p\in[1,\infty].$
\end{proof}

If the graph $b$ is locally finite, then the statement of Theorem \ref{boundedpotential} becomes, in fact, an equivalence. The argument for this is essentially the one from \cite[Theorem~2.1]{JM}. In particular, for locally finite simplicial complexes, this characterizes lower boundedness of the Forman curvature in terms of an exponential bound of the semigroup generated by the Hodge Laplacian.

\begin{proposition}\label{lowerboundpotential}
    Assume that $b$ is locally finite and that $S_2$ extends consistently to a strongly continuous semigroup $S_1$ on $\ell^1.$ Assume there is $K\in\mathbb{R}$ such that, for all $\omega\in\ell^1$ and $t\geq 0$, we have
    \[\Vert S_1(t)\omega\Vert_1\leq \mathrm{e}^{tK}\Vert\omega\Vert_1.\] Then $c\slash m$ is uniformly bounded from below, i.e., for all $x\in\Sigma$, we have
    
    \[\frac{c(x)}{m(x)}\geq -K.\]
\end{proposition}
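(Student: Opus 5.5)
The plan is to test the $\ell^1$-bound on delta functions and look at the derivative at $t=0$. Fix $x\in X$ and set $\omega=1_x\in C_c(X)\subseteq\ell^1\cap\ell^2$. By consistency, $S_1(t)1_x=S_2(t)1_x$. Local finiteness gives $C_c(X)\subseteq D(H_2)$, with $H_2\varphi=\mathcal{H}\varphi$ for $\varphi\in C_c(X)$. Indeed, $\mathcal{H}\varphi$ is again finitely supported, and Green's formula yields $Q(\varphi,\psi)=\langle\mathcal{H}\varphi,\psi\rangle$ for all $\psi\in C_c(X)$. This identity extends to $\psi\in D(Q)$ by closure, so $\varphi\in D(H_2)$. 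Hence $t\mapsto S_2(t)1_x$ is differentiable in $\ell^2$ at $t=0$, and
\[S_2(t)1_x=1_x-t\mathcal{H}1_x+o(t)\quad\text{in }\ell^2,\]
and in particular pointwise.

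Next we read off the values of $\mathcal{H}1_x$. We have $\mathcal{H}1_x(x)=\frac{1}{m(x)}\bigl(\sum_y b(x,y)+c(x)\bigr)$, and for $y\neq x$ we have $\mathcal{H}1_x(y)=-\frac{1}{m(y)}b(y,x)o(y,x)$. The latter is nonzero only for the finitely many neighbours of $x$. To expand the $\ell^1$-norm, split it into the contribution of the finite set $N=\{x\}\cup\{y: b(x,y)>0\}$ and of its complement.

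On $N$, pointwise convergence gives for small $t$
\[\sum_N m|S_2(t)1_x|=m(x)\Bigl(1-\tfrac{t}{m(x)}\bigl(\textstyle\sum_y b(x,y)+c(x)\bigr)\Bigr)+t\sum_{y\neq x}b(x,y)+o(t)=m(x)-tc(x)+o(t).\]
Here we used that $|o|=1$ and that the leading coefficient at $x$ is positive for small $t$. Outside $N$ the terms are nonnegative, so
\[\|S_1(t)1_x\|_1\geq m(x)-tc(x)+o(t).\]
Combining this with the hypothesis $\|S_1(t)1_x\|_1\le \mathrm{e}^{tK}m(x)=m(x)+tKm(x)+o(t)$, subtracting $m(x)$, dividing by $t$ and letting $t\to0^+$ gives $-c(x)\le Km(x)$. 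This is exactly $c(x)/m(x)\ge -K$.

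The main obstacle is justifying the first-order expansion pointwise, i.e. that $1_x\in D(H_2)$ with $H_2 1_x=\mathcal{H}1_x$. This is where local finiteness enters. Without it, $\mathcal{H}1_x$ need not lie in $\ell^2$, and the neighbour contributions $t\sum_y b(x,y)$ could not be extracted from a finite set. Once the domain inclusion is settled via Green's formula and the closure of $Q$, the remaining steps are elementary bookkeeping.
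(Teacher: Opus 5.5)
Your proof is correct. It differs from the paper's only in which side of the $\ell^1$--$\ell^\infty$ duality it works on.

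The paper does not expand $\|S_1(t)1_x\|_1$. Instead it tests against the bounded function $\varphi_x$ with $\varphi_x(x)=1$, $\varphi_x(y)=\overline{o(x,y)}$ for $b(x,y)>0$, and $\varphi_x=0$ elsewhere. These phases make the off-diagonal terms cancel the degree term, so $\mathcal{H}\varphi_x(x)=c(x)/m(x)$. By self-adjointness and consistency, $S_2(t)\varphi_x(x)=\langle\varphi_x,S_1(t)1_x/m\rangle$, hence $\mathrm{Re}\,S_2(t)\varphi_x(x)\le\|\varphi_x\|_\infty\mathrm{e}^{tK}\|1_x/m\|_1=\mathrm{e}^{tK}$. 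Differentiating at $t=0$ at the single vertex $x$ gives $-c(x)/m(x)\le K$.

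To first order in $t$, $\varphi_x$ coincides on $N$ with the phase of $S_2(t)1_x$, so $\varphi_x$ is essentially the norming functional for $S_2(t)1_x$ and the two arguments are dual versions of each other. The paper picks this functional by hand, so it needs a derivative at one vertex only and never expands a modulus. You let the $\ell^1$-norm pick the phases automatically. The cost is a first-order expansion of $|S_2(t)1_x|$ at $x$ and at each of the finitely many neighbours, plus discarding the nonnegative tail outside $N$.

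Local finiteness enters both proofs in the same way: it guarantees that the finitely supported test vector ($\varphi_x$ for the paper, $1_x$ for you) lies in $D(H_2)$ with $H_2=\mathcal{H}$ there.
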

\begin{proof}
    We fix $x\in X$ and define the function $\varphi_x$ as 

    \begin{equation*}
        \varphi_x(y) = \begin{cases}
            1, &\text{if}\quad y = x.\\
           \overline{ o(x,y)}, & \text{if}\quad b(x,y)>0. \\
            0,&\text{else}.
        \end{cases}
    \end{equation*}
    Local finiteness implies $\varphi_x\in C_c(X)\subseteq D(H_2)$ due to \cite[Lemma~2.11]{GKS} and Green's formula, \cite[Lemma~2.1]{GKS}. Moreover, we get by construction that \[H_2\varphi_x(x) = \frac{c(x)}{m(x)}.\]
    Then, by consistency of $S_1$ and $S_2$ on $C_c(X),$ we have
    
        \[S_2(t)\varphi_x(x) = \langle S_2(t)\varphi_x,1_x\slash m\rangle =  \langle \varphi_x,S_1(t)1_x\slash m\rangle\] and so \[\mathrm{Re}(S_2(t)\varphi_x(x))\leq \mathrm{e}^{tK}\Vert\varphi_x\Vert_\infty\Vert1_x\slash m\Vert_1=\mathrm{e}^{tK}\]
        as $\Vert\varphi_x\Vert_\infty\Vert1_x\slash m\Vert_1=1$.
    Since $\varphi_x\in D(H_2)$, we get 
    \begin{align*}
        -\frac{c(x)}{m(x)} & = -H_2\varphi_x(x) = \lim_{t\rightarrow 0^+}\frac{\mathrm{Re}(S_2(t)\varphi_x(x))-\varphi_x(x)}{t}\leq \lim_{t\rightarrow 0^+}\frac{\mathrm{e}^{tK}-1}{t} = K.
    \end{align*}This finishes the proof.
\end{proof}

\subsection{The Case of Exponential Volume Growth}\label{sectionVolGrowth}
Let $b$ be a graph over $X$ and $d$ be an intrinsic metric with finite jump size.
We say that $b$ has (uniform) \textit{exponential volume growth} with \textit{growth rate} $\nu>0$, if there is $C\ge 0$ such that for all $x\in X$,  $r>0$ \[m(B_r(x))\leq C\mathrm{e}^{\nu r}m(x),\] where $B_r(x)$ is the metric ball with radius $r$ centered at $x$ with respect to $d.$   We say that $b$ has (uniform) \textit{subexponential volume growth}, if for all $\varepsilon>0$ there is $C_\varepsilon>0$ such that for all $x\in X$,    $r>0$
\[m(B_r(x))<C_\varepsilon\mathrm{e}^{\varepsilon r}m(x).\]

The main result of this subsection is the following theorem.
\begin{theorem}[Exponential volume growth]\label{lpextend}
    Assume the graph   has exponential volume growth with growth rate $\nu>0$. Then,    for all $\beta>3\nu\slash 2$ there exist constants
     $C_\beta = s^{-2}(\cosh(\beta s)-1)$ and $M=M_\beta>0$ such that for all $t\geq 0$  
\begin{align*}
        \Vert S_2(t) \Vert_{1,1} \leq M\mathrm{e}^{(C_\beta-\lambda_2) t}.
    \end{align*}
In particular,
    the semigroup $S_2$ extends consistently to a strongly continuous semigroup  on $\ell^p$ for $1\leq p<\infty$ and to a weak-$\ast$-continuous semigroup on $\ell^\infty.$ 
\end{theorem}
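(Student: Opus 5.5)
The plan is to bound the $\ell^1$-operator norm through the kernel, using the pointwise estimate of Corollary~\ref{DGGCorollary}. For $f\in C_c(X)$ we have $S_2(t)f(x)=\sum_y m(y)p_t(x,y)f(y)$. Hence
\[
\|S_2(t)f\|_1\le \sup_{y}\Big(\sum_x m(x)|p_t(x,y)|\Big)\|f\|_1 ,
\]
and it suffices to bound $\sum_x m(x)|p_t(x,y)|$ uniformly in $y$. Corollary~\ref{DGGCorollary} gives
\[
m(x)|p_t(x,y)|\le \sqrt{m(x)/m(y)}\,\mathrm{e}^{-\beta d(x,y)}\,\mathrm{e}^{(C_\beta-\lambda_2)t}.
\]
So the whole task reduces to showing
\[
\sup_y\sum_x \sqrt{m(x)/m(y)}\,\mathrm{e}^{-\beta d(x,y)}=:M<\infty .
\]

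To prove this, I decompose $X$ into annuli $A_n=B_{n+1}(y)\setminus B_n(y)$, $n\ge0$, together with the ball $B_0(y)$ at the start. On $A_n$ we have $\mathrm{e}^{-\beta d(x,y)}\le \mathrm{e}^{-\beta n}$. The factor $\sqrt{m(x)}$ is handled by Cauchy–Schwarz over $A_n$:
\[
\sum_{x\in A_n}\sqrt{m(x)}\le \sqrt{\#A_n}\,\sqrt{m(A_n)}.
\]
This is where the difficulty lies, since volume growth controls $m(B_r)$ but not the cardinality $\#A_n$. The fix is to estimate $\#A_n$ through the volume bound applied at the points $x$ themselves. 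Each $x\in A_n$ satisfies $y\in B_{n+1}(x)$, so the growth hypothesis centred at $x$ gives
\[
m(y)\le m(B_{n+1}(x))\le C\mathrm{e}^{\nu(n+1)}m(x),
\qquad\text{i.e.}\qquad
\sqrt{m(x)/m(y)}\ge C^{-1/2}\mathrm{e}^{-\nu(n+1)/2}.
\]
Rather than using Cauchy–Schwarz, I write directly
\[
\sqrt{m(x)/m(y)}=\frac{m(x)}{m(y)}\Big(\frac{m(y)}{m(x)}\Big)^{1/2}\le \frac{m(x)}{m(y)}\,C^{1/2}\mathrm{e}^{\nu(n+1)/2}.
\]
Summing over $A_n$ and using $m(A_n)\le m(B_{n+1}(y))\le C\mathrm{e}^{\nu(n+1)}m(y)$ yields
\[
\sum_{x\in A_n}\sqrt{m(x)/m(y)}\le C^{3/2}\mathrm{e}^{3\nu(n+1)/2}.
\]
This produces exactly the threshold in the statement. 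Summing $\sum_n \mathrm{e}^{-\beta n}\mathrm{e}^{3\nu(n+1)/2}$ gives a geometric series that converges precisely when $\beta>3\nu/2$. Its value $M_\beta$ is independent of $y$ and $t$.

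With $\|S_2(t)f\|_1\le M\mathrm{e}^{(C_\beta-\lambda_2)t}\|f\|_1$ on $C_c(X)$ in hand, density gives a bounded extension $S_1(t)$ with the semigroup property. Lemma~\ref{strongcont} as stated requires the bound with constant $1$, so I cannot quote it verbatim. Instead I rerun its proof. Strong continuity only needs a locally uniform bound on $\|T_p(t)\|$ together with convergence on the dense set $C_c(X)$. For $f\in C_c(X)$, the convergence $\|S_1(t)f-f\|_1\to0$ is obtained by splitting $X$ into a large finite set $K$, where the semigroup converges pointwise, and its complement. On the complement, the kernel bound shows that the tail $\sum_{X\setminus K}m|S_2(t)f|$ is uniformly small for $t\le1$, because $\sum_x \sqrt{m(x)/m(y)}\mathrm{e}^{-\beta d(x,y)}$ converges uniformly over the finitely many $y\in\operatorname{supp}f$.

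The remaining cases follow as described in the paper. The semigroup $S_\infty=S_1^*$ is weak-$*$ continuous, and Riesz–Thorin interpolation between $\ell^1$, $\ell^2$ and the dual $\ell^\infty$ bound gives consistent bounded semigroups on $\ell^p$. Strong continuity for $1<p<\infty$ follows from interpolating $\|S(t)f-f\|_p$ between the $\ell^1$ and $\ell^2$ (respectively $\ell^2$ and $\ell^\infty$) norms for $f\in C_c(X)$.
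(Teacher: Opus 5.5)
Your proposal is correct and follows the paper's proof: the same ingredients appear, namely the column-sum bound from Corollary~\ref{DGGCorollary}, summability of $\sqrt{m(x)/m(y)}\,\mathrm{e}^{-\beta d(x,y)}$ with the same threshold $3\nu/2$, density with Riesz--Thorin and duality, and a direct splitting argument for strong continuity on $\ell^1$ (the paper also bypasses Lemma~\ref{strongcont} there, for the reason you give). The differences are minor: you bound each annulus directly via $m(y)\le C\mathrm{e}^{\nu(n+1)}m(x)$ rather than via Cauchy--Schwarz and the cardinality bound of Lemma~\ref{LemmaCombVolGrowth}, and you control the tail with a large finite set taken from the convergent series rather than with the $1$-neighbourhood $K_1$ and a large $\beta$.
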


We start with the observation that exponential volume growth implies a bounded combinatorial degree and a bound on the number of vertices in a ball.

\begin{lemma}[Exponential volume growth implies bounded degree]\label{LemmaCombVolGrowth}
    Assume the graph   has exponential volume growth with growth rate $\nu>0$. Then, there is $C>0$ such that for all $x\in X$,    $r>0$
 \[\# B_r(x)  \leq C^2\mathrm{e}^{2\nu r}.\]
    In particular, the combinatorial vertex degree is bounded by $C^2\mathrm{e}^{2\nu s}$, where $s>0$ is the jump size of $d.$
\end{lemma}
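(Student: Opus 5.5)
The idea is to compare the measures of neighbouring vertices using the volume growth bound in both directions, and then to count the vertices in a ball by splitting it into pieces where $m$ is comparable to $m(x)$.

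\textbf{Step 1: comparing measures.} Let $C$ be the constant from exponential volume growth. If $y\in B_r(x)$, then also $x\in B_r(y)$, so
\[m(x)\le m(B_r(y))\le C\mathrm{e}^{\nu r}m(y).\]
Hence every $y\in B_r(x)$ satisfies
\[m(y)\ge C^{-1}\mathrm{e}^{-\nu r}m(x).\]
Inverting the roles of $x$ and $y$ gives the symmetric bound as well, although only the lower bound is needed.

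\textbf{Step 2: counting.} Sum the lower bound over the ball:
\[\#B_r(x)\cdot C^{-1}\mathrm{e}^{-\nu r}m(x)\le \sum_{y\in B_r(x)}m(y)=m(B_r(x))\le C\mathrm{e}^{\nu r}m(x).\]
Dividing by $m(x)>0$ yields
\[\#B_r(x)\le C^2\mathrm{e}^{2\nu r}.\]
This is exactly the claimed bound, with the same constant $C$ as in the volume growth assumption.

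\textbf{Step 3: the degree bound.} If $b(x,y)>0$, then $d(x,y)\le s$ by definition of the jump size, so every neighbour $y$ of $x$ lies in $B_s(x)$. The combinatorial degree of $x$ is therefore at most
\[\#B_s(x)\le C^2\mathrm{e}^{2\nu s}.\]

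\textbf{Possible subtlety.} Since $d$ is only a pseudometric, distinct points may have distance $0$, but this does not affect the argument. One should also check that $r>0$ is allowed, including $r=s$, which holds since $s>0$. If the ball is intended to be open, one can apply the bound with $r$ replaced by $r+\epsilon$ and let $\epsilon\to0$.

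I do not expect any real obstacle here. The only nontrivial idea is the reversal in Step 1: applying the volume bound centred at $y$ rather than at $x$.
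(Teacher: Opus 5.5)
Your proof is correct and is essentially the paper's argument: compare $m(x)$ with $m(y)$ by applying the growth bound to a ball centred at $y$, then bound $\#B_r(x)=\sum_{y\in B_r(x)} m(y)/m(y)\le C\mathrm{e}^{\nu r}m(B_r(x))/m(x)\le C^2\mathrm{e}^{2\nu r}$, and conclude the degree bound from $B_s(x)$. Your use of $B_r(y)$ instead of the paper's $B_{d(x,y)}(y)$ also avoids the degenerate radius $d(x,y)=0$, which a pseudometric allows but the growth assumption (stated for $r>0$) does not directly cover.
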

\begin{proof}
    First, observe that exponential volume growth allows to estimate $m(x)$ by $m(y)$ for $x,y\in X.$ More precisely,
    \[m(x)\leq m(B_{d(x,y)}(y))\leq C\mathrm{e}^{\nu d(x,y)}m(y).\]
    Next, we observe for $r>0$
    \[\# B_r(x) = \sum_{y\in B_r(x)}\frac{m(y)}{m(y)}\leq C\mathrm{e}^{\nu r}\frac{m(B_r(x))}{m(x)}\leq C^2\mathrm{e}^{2\nu r}.\]
    The statement about the combinatorial vertex degree follows by noting that neighbors of a vertex $x$ are contained in $B_s(x)$.
\end{proof}

\begin{lemma}\label{LemmaVolGrowth}
    Assume the graph  has exponential volume growth with growth rate $\nu>0$. For all  $\beta_0>\frac{3}{2}\nu$  there is $C=C_{\beta_0}>0$ such that, for all $\beta\geq\beta_0$,
    \[\sup_{x\in X}\sum_{y\in X}\sqrt{\frac{m(y)}{m(x)}}\mathrm{e}^{-\beta d(x,y)}<C.\]
    In particular, if the graph has subexponential volume growth, then such a bound holds for all $\beta>0$ with $C=C_\beta>0$.
\end{lemma}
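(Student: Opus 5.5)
We decompose the sum into metric annuli and combine two volume estimates. First, the lower bound from the proof of Lemma~\ref{LemmaCombVolGrowth} controls the mass ratio: for $y\in B_r(x)$,
\[m(y)\le m(B_r(x))\le C\mathrm{e}^{\nu r}m(x),\]
so $\sqrt{m(y)/m(x)}\le \sqrt{C}\,\mathrm{e}^{\nu r/2}$. Second, Lemma~\ref{LemmaCombVolGrowth} gives $\#B_r(x)\le C^2\mathrm{e}^{2\nu r}$. Using both crudely would give the threshold $\beta>\tfrac52\nu$. To reach $\tfrac32\nu$ we instead handle the mass ratio and the counting together.

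We fix $x$ and split $X$ into the shells $A_n=B_{n+1}(x)\setminus B_n(x)$ for $n\ge 0$, together with $B_0(x)$. These shells exhaust the points at finite distance. Points at infinite distance contribute zero, since the exponential vanishes there. On $A_n$ we have $\mathrm{e}^{-\beta d(x,y)}\le \mathrm{e}^{-\beta n}$. By Cauchy--Schwarz,
\[\sum_{y\in A_n}\sqrt{m(y)}\le \sqrt{\#A_n}\,\sqrt{m(A_n)}\le \sqrt{C^2\mathrm{e}^{2\nu(n+1)}}\,\sqrt{C\mathrm{e}^{\nu(n+1)}m(x)}.\]
This is of order $\mathrm{e}^{\frac32\nu n}\sqrt{m(x)}$. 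Dividing by $\sqrt{m(x)}$ gives
\[\sum_{y\in A_n}\sqrt{m(y)/m(x)}\,\mathrm{e}^{-\beta d(x,y)}\le C'\mathrm{e}^{(\frac32\nu-\beta)n},\]
with $C'$ independent of $x$. For $\beta\ge\beta_0>\tfrac32\nu$, summing this geometric series over $n$ yields a bound $C'/(1-\mathrm{e}^{\frac32\nu-\beta_0})$, uniform in $x$ and in $\beta\ge\beta_0$. The ball $B_0(x)$ is covered by the same estimate with $n=0$.

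For the subexponential case, we fix $\beta>0$. We choose $\varepsilon$ so that $\tfrac32\varepsilon<\beta$; the same argument with $\nu=\varepsilon$ and constants $C_\varepsilon$ then gives the bound. This requires Lemma~\ref{LemmaCombVolGrowth} to apply for every growth rate $\varepsilon>0$, which holds since its proof uses only the volume inequality.

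The main obstacle is recognizing that Cauchy--Schwarz on each shell gives the sharp exponent $\tfrac32\nu$: the count contributes $\nu$ from $\sqrt{\mathrm{e}^{2\nu r}}$ and the mass contributes $\tfrac12\nu$. Everything else is a routine geometric-series bound.
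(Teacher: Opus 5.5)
Your proposal is correct and is essentially the paper's own argument. You split into distance shells, apply Cauchy--Schwarz on each shell to pair the counting bound $\#B_{r}(x)\le C^2\mathrm{e}^{2\nu r}$ from Lemma~\ref{LemmaCombVolGrowth} with the mass bound $m(B_{r}(x))\le C\mathrm{e}^{\nu r}m(x)$, obtaining the exponent $\tfrac32\nu-\beta$ per shell, and then sum the geometric series (your separate handling of $B_0(x)$ and the explicit uniformity in $\beta\ge\beta_0$ are, if anything, slightly more careful than the paper's bookkeeping).
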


\begin{proof}
 By the Cauchy-Schwarz inequality and employing the Lemma~\ref{LemmaCombVolGrowth}, we get for arbitrary $x\in X$ and $\beta>\frac{3}{2}\nu$
    \begin{align*}
        \sum_{y\in X}\sqrt{\frac{m(y)}{m(x)}}\mathrm{e}^{-\beta d(x,y)}
        &= \frac{1}{\sqrt{m(x)}}\sum_{r= 0}^\infty 
        \sum_{ r\leq d(x,y)<r+1}\sqrt{ {m(y)} }\mathrm{e}^{-\beta d(x,y)}\\
        &\leq \frac{1}{\sqrt{m(x)}}\sum_{r= 0}^\infty m(B_{r+1}(x))^{1\slash 2}\left(\sum_{r\leq d(x,y)<r+1}\mathrm{e}^{-2\beta d(x,y)}\right)^{1\slash 2}\\
        &\leq C\sum_{r = 1}^\infty\mathrm{e}^{\nu r\slash 2-\beta r}  \#B_{r}(x)^{1\slash 2} \leq C'\sum_{r = 1}^\infty \mathrm{e}^{3\nu r\slash 2-\beta r} <\infty.
\end{align*}
The ``in particular'' statement follows by noting that, for subexponential volume growth, we can choose $\nu$ arbitrarily small.
\end{proof}

The lemma above allows us to extend the semigroup $S_2$ to $\ell^p.$

\begin{proof}[Proof of Theorem \ref{lpextend}]
    Let $ f\in C_c(X)$ and $t\geq 0.$ Then, choose $\beta>3\nu\slash 2$ so that
    \begin{align*}
        \sum_ X m\vert S_2(t) f\vert &= \sum_{x\in X} m(x)\Big\vert\sum_{y\in X}m(y)p_t(x,y) f(y)\Big\vert\\
        &\leq \mathrm{e}^{(C_\beta-\lambda_2) t}\sum_{y\in X} m(y)\vert f(y)\vert\sum_{x\in X}\sqrt{\frac{m(x)}{m(y)}}\mathrm{e}^{-\beta d(x,y)}\leq C\mathrm{e}^{(C_\beta-\lambda_2) t}\Vert f\Vert_1.
    \end{align*}
Here, we used the triangle inequality, Corollary \ref{DGGCorollary} and Lemma \ref{LemmaVolGrowth}. Thus, $S_2(t)$ extends uniquely to a bounded operator $S_1(t)$ on $\ell^1.$ From the Riesz-Thorin interpolation theorem,  we conclude that $S_2(t)$ also extends uniquely to bounded operators $S_p(t)$ on $\ell^p$ for all $1\leq p\leq 2.$ The fact that $S_p$ is again a semigroup, i.e., for $t,s\geq 0$, the identity $S_p(t)S_p(s) = S_p(t+s)$ holds, follows as $S_2$ is a semigroup and $\ell^2\cap\ell^p$ is dense in $\ell^p$, $1\le p<\infty$. By noting that $S_2(t)$ is self-adjoint, we conclude by a standard duality argument that the same holds for $2\leq p\leq \infty.$  It remains to show that $S_p$ is strongly continuous for $\ell^p$, $1\le p<\infty$. \\
Let $p = 1.$ We established that $S_1$ is exponentially bounded. Thus, as $C_c(X)$ is dense in $\ell^1,$ it is sufficient to show that $\lim_{t\rightarrow0^+}\Vert S_1(t)f-f\Vert_1 = 0$ for all $f\in C_c(X).$ Let $K = \mathrm{supp}\,f$ and fix arbitrary $\varepsilon >0.$ Let 
\[K_1 = \lbrace y\in X\mid \text{ there is}~x\in K~\text{such that}~d(x,y)\leq1\rbrace.\]  
Since we assume exponential volume growth, we find that 
\[m(K_1 )\leq C\mathrm{e}^{\nu}m(K)<\infty.\] Thus, as $S_2$ is strongly continuous, we conclude for $t$ small enough, that 
\[\sum_{K_1}m\vert S_1(t)f-f\vert = \sum_{K_1}m\vert S_2(t)f-f\vert\leq m(K_1)^{1\slash 2}\Vert S_2(t)f-f\Vert_2<\varepsilon\slash 2.\] 
Moreover, by Lemma \ref{LemmaVolGrowth} we may choose  $\beta>3\nu$ large enough, such that 
\[\Vert f\Vert_1\mathrm{e}^{-\beta\slash 2}\sup_{x\in X}\sum_{y\in X}\sqrt{\frac{m(y)}{m(x)}}\mathrm{e}^{-\frac{\beta}{2} d(x,y)}<\varepsilon\slash 3.\]
Finally, let $C_\beta$ be the constant from Corollary \ref{DGGCorollary} and choose $t$ small enough, such that $\mathrm{e}^{C_\beta t}\varepsilon\slash 3<\varepsilon\slash 2.$ Then, with  Corollary \ref{DGGCorollary}, we obtain
\begin{align*}
    \sum_{X\setminus K_1}m\vert S_1(t)f-f\vert &= \sum_{X\setminus K_1} m\vert S_2(t)f\vert\leq \sum_{x\in X\setminus K_1}m(x)\sum_{y\in K}m(y)\vert f(y)\vert\vert p_t(x,y)\vert\\
    &\leq \mathrm{e}^{C_\beta t}\sum_{y\in K}m(y)\vert f(y)\vert\sum_{x\in X\setminus K_1}\sqrt{\frac{m(x)}{m(y)}}\mathrm{e}^{-\beta d(x,y)}\\
    &\leq \mathrm{e}^{C_\beta t}\sum_{y\in K}m(y)\vert f(y)\vert\sum_{x\in X}\sqrt{\frac{m(x)}{m(y)}}\mathrm{e}^{-\frac{\beta}{2} d(x,y)}\mathrm{e}^{-\frac{\beta}{2}}\\
    &\leq \varepsilon\mathrm{e}^{C_\beta t}\slash 3< \varepsilon\slash 2.
\end{align*}
All in all, we find that for $t$ small enough $\Vert S_1(t)f-f\Vert_1<\varepsilon.$ We obtain the strong continuity for $1\leq p\leq 2$ by interpolation and for $2\leq p<\infty$ by a standard duality argument.\end{proof}


\subsection{The Action of the Generator} In this section, we give a criterion when the generator $H_p$ is a rectriction of the formal Schrödinger operator $\mathcal{H}$.
So, we assume that $ -H_p$ is the generator of a strongly continuous semigroup $S_p$ on $\ell^p$ for some $1\leq p<\infty$ that is consistent with the semigroup $S_2$. Moreover, $H_\infty=(H_1)^*$   whenever $S_1$ exists and is strongly continuous. By general theory about semigroups (\cite{EngelNagelShort,KLW}) such generators exist. Clearly, $ H =  H_2.$ Here we briefly discuss the action of $ H_p$ in case when $ X$ is locally finite. 

\begin{theorem}[Action of $ H_p$]\label{actionHp}
Let $p\in [1,\infty]$ and $q$ the respective H\"older dual. Assume $\mathcal{H} C_c(X)\subseteq\ell^q\cap\ell^2,$ which holds in particular if $b$ is locally finite.  Then \[ H_p =   \mathcal{H}\quad\text{on}~ D( H_p)\quad\text{and}\quad C_c(X)\subseteq D(H_q).\]
\end{theorem}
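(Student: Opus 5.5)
The plan is a duality argument. Green's formula moves $\mathcal{H}$ onto test functions in $C_c(X)$, and the consistency of the semigroups identifies the Banach adjoint of $H_p$ with $H_q$. I will first show $C_c(X)\subseteq D(H_q)$ with $H_q\varphi=\mathcal{H}\varphi$. Then, for $f\in D(H_p)$, I will pair $H_pf$ against $1_x\in C_c(X)$ and read off the pointwise action.

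\emph{Step 1: $C_c(X)\subseteq D(H_2)$ with $H_2\varphi=\mathcal{H}\varphi$.} Fix $\varphi\in C_c(X)$. By assumption $\mathcal{H}\varphi\in\ell^2$. For $\psi\in C_c(X)$, Green's formula gives $Q(\varphi,\psi)=\langle \mathcal{H}\varphi,\psi\rangle$. Since $C_c(X)$ is a form core of the closure $Q$, this identity extends to all $\psi\in D(Q)$. Hence $\varphi\in D(H_2)$ and $H_2\varphi=\mathcal{H}\varphi$; this is exactly \cite[Lemma~2.11]{GKS}.

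\emph{Step 2: $C_c(X)\subseteq D(H_q)$.} Since $\varphi\in D(H_2)$ and $S_2(t)\varphi=S_q(t)\varphi$, it suffices to show that $(S_q(t)\varphi-\varphi)/t\to-\mathcal{H}\varphi$ in $\ell^q$ (weak-$\ast$ if $q=\infty$).
\begin{itemize}
\item The identity $S_2(t)\varphi-\varphi=-\int_0^t S_2(r)\mathcal{H}\varphi\,dr$ holds in $\ell^2$. Because $\mathcal{H}\varphi\in\ell^2\cap\ell^q$ and the semigroups are consistent, it also holds in $\ell^q$ as a Bochner integral. For $q=\infty$ it holds as a weak-$\ast$ integral, tested against $\ell^1\cap\ell^2$.
\item Strong continuity of $S_q$ at $0$, applied to $\mathcal{H}\varphi$, then gives the convergence of the difference quotient.
\item For $q=\infty$, where $H_\infty=(H_1)^*$, I would instead check the adjoint relation directly. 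For $g\in D(H_1)$, the relation $\langle H_1g,\varphi\rangle=\langle g,\mathcal{H}\varphi\rangle$ follows by differentiating $\langle S_1(t)g,\varphi\rangle=\langle g,S_2(t)\varphi\rangle$ at $t=0$.
\end{itemize}
This yields $C_c(X)\subseteq D(H_q)$ and $H_q\varphi=\mathcal{H}\varphi$.

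\emph{Step 3: the action of $H_p$.} Let $f\in D(H_p)$ and $x\in X$. Using $H_p=(H_q)^*$ (for $p=\infty$ by definition, otherwise by the general facts recalled in this section) and Step 2 with $\varphi=1_x$,
\[
m(x)H_pf(x)=\langle H_pf,1_x\rangle=\langle f,H_q1_x\rangle=\langle f,\mathcal{H}1_x\rangle
=\sum_{y}b(x,y)\bigl(f(x)-o(x,y)f(y)\bigr)+c(x)f(x).
\]
This sum converges absolutely because $\mathcal{H}1_x\in\ell^q$ and $f\in\ell^p$. In particular $f\in F$, and the right-hand side equals $m(x)\mathcal{H}f(x)$. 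For $p=1$ the duality pairing is with $\ell^\infty$, and the same computation applies. The locally finite remark is immediate, since then $\mathcal{H}\varphi$ is finitely supported.

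The main obstacle I expect is Step 2: justifying that $H_q$ really acts as $\mathcal{H}$ on $C_c(X)$ uniformly in $q$. This needs the integral identity transferred from $\ell^2$ to $\ell^q$, and a separate weak-$\ast$ treatment at $q=\infty$, where strong continuity fails. The identification $H_p=(H_q)^*$ for $1<p<\infty$ is standard but must be used carefully, because of the complex-conjugation convention for the pairing.
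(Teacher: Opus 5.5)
Your proposal is correct and follows essentially the same route as the paper. Green's formula gives $C_c(X)\subseteq D(H_2)$ with $H_2\varphi=\mathcal{H}\varphi$, consistency transfers this to $C_c(X)\subseteq D(H_q)$, and pairing $H_pf$ with $1_x$ via duality, together with H\"older, yields both $H_pf=\mathcal{H}f$ and $\ell^p\subseteq F$. The only difference is technical: the paper does the transfer with consistent resolvents, namely $\varphi=(H_q+\alpha)^{-1}(\mathcal{H}+\alpha)\varphi$ and $\langle (H_p+\alpha)f,\varphi\rangle=\langle f,(\mathcal{H}+\alpha)\varphi\rangle$, which avoids your Bochner/weak-$\ast$ integral bookkeeping but reaches the same conclusion. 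One small wording point: for $p=1$ the fact you actually use is $H_\infty=H_1^*$, not $H_1=(H_\infty)^*$, and that is all Step~3 needs.
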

\begin{proof}
    First, observe that $(H_p+\alpha)^{-1} = (H_2+\alpha)^{-1}$ on $\ell^p\cap\ell^2$ and  $(H_q+\alpha)^{-1} = (H_2+\alpha)^{-1}$ on $\ell^q\cap\ell^2$ for large enough $\alpha>0.$ This follows for $p,q<\infty$ by consistency of the semigroups and the Laplace transform, \cite[Theorem 1.10]{EngelNagelShort} and for $p=\infty$ or $q = \infty $ by duality and self-adjointness of $(H_2+\alpha)^{-1}$. Now, let $f\in D(H_p)$ and $\varphi\in C_c(X).$ 
    We have $\mathcal{H}C_c(X)\subseteq \ell^2$ and so it follows from Green's formula, \cite[Lemma 2.1]{GKS} that $C_c(X)\subseteq D(H_2)$ and $ H_2 = \mathcal{H}$ on $D(H_2).$ Thus,
\begin{align*}
     \sum_ X m(H_p+\alpha)f\overline{\varphi} &= \sum_ X m (H_p+\alpha)f\overline{( H_2+\alpha)^{-1}(\mathcal{H}+\alpha)\varphi}
    = \sum_ X m f\overline{(\mathcal{H}+\alpha)\varphi} .
\end{align*}
 It follows that $\varphi\in D(H_p^*)$ which gives the second statement for $q>1$ as $H_p^* = H_q$. The case $q = 1$ follows from the identity \[\varphi = (H_2+\alpha)^{-1}(\mathcal{H}+\alpha)\varphi = (H_1+\alpha)^{-1}(\mathcal{H}+\alpha)\varphi\in D(H_1).\]
Moreover, by choosing $x\in X$ and $\varphi = 1_x\slash m,$ we see that \[H_pf(x) = \sum_ X mf\overline{\mathcal{H}\varphi} = \mathcal{H} f(x),\]
 where the last equality is due to the Green's formula for $\mathcal H$, \cite[Lemma~2.1]{GKS}. The inclusion $\ell^p\subseteq F$ follows from $\mathcal{H}C_c(X)\subseteq \ell^q$ and Hölder's inequality.
\end{proof}

\eat{
\begin{remark}
We mention that Theorem \ref{actionHp} applies to Laplacians $\Delta^\pm$, $\Delta^H$ on simplicial complexes (see Section \ref{s:weighted}) as well. In fact, in \cite{BK} it was shown that those Laplacians on $\ell^2$ act like Schrödinger operators $\mathcal{H},$ which in turn act as compositions of boundary and coboundary operators $\partial$ and $\delta.$ Theorem \ref{actionHp} then tells us that this is true for the $\ell^p$-generator as well for all $p\in[1,\infty].$

\end{remark}}

\subsection{Analyticity and the Parabolic Equation}
Here, we briefly discuss the analyticity of the semigroup $S_p$ on $\ell^p$ and the solution of the parabolic equation associated to $ H_p$
\begin{equation}\tag{PE} \label{PE}
    - H_p\omega_t(x) = \frac{d}{dt}\omega_t(x),
\end{equation}
for all $t>0$, $x\in X$ and initial conditions $\omega_0\in\ell^p.$ In the previous section, we discussed when $H_p$ is a restriction of the formal Schrödinger operator $\mathcal{H}$ and in this case \eqref{PE} is indeed the parabolic equation associated to $\mathcal{H}$.

While $S_p$ itself does not need to be analytic in the strict sense of \cite[Chapter~II Definition~4.5]{EngelNagelShort}, it is after a spectral shift due to the exponential bound on $S_p$. In other words, we find constants $A\in\mathbb{R}$ such that the semigroup $T(t) = \mathrm{e}^{tA}S_p(t)$ is analytic and bounded. Thus, $T$ maps $\ell^p$ into the domain of its generator $H_p-A$ according to \cite[Chapter~II Theorem~4.6]{EngelNagelShort}. Consequently, $T$ solves the \emph{modified parabolic equation} \[- (H_p-A)\omega_t(x) = \frac{d}{dt}\omega_t(x)\] for all $t> 0,~x\in X$ and initial conditions $\omega_0\in\ell^p$ which is true iff $S_p$ solves (\ref{PE}) for all $\omega_0\in\ell^p.$
\begin{proposition}\label{analyticity}
    \begin{itemize}
        \item [(a)] If the potential is form
bounded with constant $M>0$, then $\omega_t = S_p(t)\omega$ solves 
\eqref{PE} on $X$ for all initial conditions $\omega\in\ell^p$ with \[p\in \left(2\frac{M+1}{M}-2\frac{\sqrt{M+1}}{M},2\frac{M+1}{M}+2\frac{\sqrt{M+1}}{M}\right).\]
If  the potential is  bounded from below, then this is true for all $1<p<\infty.$
        \item [(b)] If the graph   has exponential volume growth, then $\omega_t = S_p(t)\omega$ solves 
        \eqref{PE} on $X$ for all initial conditions $\omega\in\ell^p$ and all $1<p<\infty.$
    \end{itemize}
    \end{proposition}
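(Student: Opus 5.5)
The plan is to show that the semigroups $S_p$ are analytic after an exponential shift, and then read off the solution of \eqref{PE} from the discussion preceding the proposition. For $1<p<\infty$ analyticity will come from Stein interpolation between the analytic $\ell^2$-semigroup and an exponentially bounded semigroup on a neighbouring $\ell^{p_0}$.

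\emph{Step 1 (analytic extension on $\ell^2$).} Since $H_2\ge \lambda_2\ge 0$ is self-adjoint, the spectral theorem extends $z\mapsto \mathrm{e}^{-zH_2}$ holomorphically to the right half plane $\mathrm{Re}\,z>0$, with $\Vert \mathrm{e}^{-zH_2}\Vert_{2,2}\le 1$.

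\emph{Step 2 (real bounds on the endpoints).} For (a) with $M>0$, fix $p$ in the open interval $I^\circ$. Choose $p_0$ in the closed interval $I$ of Theorem~\ref{formboundedpotential} such that $p$ lies strictly between $2$ and $p_0$; this is possible precisely because $p$ is interior and $2\in I$. Theorem~\ref{formboundedpotential} then gives $\Vert S_2(t)\Vert_{p_0,p_0}\le \mathrm{e}^{-D_{p_0}t}$ for all $t\ge 0$. If instead $c/m\ge -K$, Theorem~\ref{boundedpotential} gives $\Vert S_2(t)\Vert_{p_0,p_0}\le \mathrm{e}^{Kt}$ for every $p_0\in[1,\infty]$, so we may take $p_0=1$ or $p_0=\infty$. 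For (b), Theorem~\ref{lpextend} gives $\Vert S_2(t)\Vert_{1,1}\le M\mathrm{e}^{(C_\beta-\lambda_2)t}$, and by duality the same bound holds on $\ell^\infty$; again $p_0\in\{1,\infty\}$ is admissible. In every case set $\omega = \max(0,-D_{p_0},K,C_\beta-\lambda_2)$, so that $\Vert \mathrm{e}^{-\omega t}S_2(t)\Vert_{p_0,p_0}\le M'$ for all $t\ge 0$.

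\emph{Step 3 (Stein interpolation).} Consider the shifted semigroup $T(z)=\mathrm{e}^{-\omega z}\mathrm{e}^{-zH_2}$. For $\theta\in(0,\pi/2)$ define the analytic family $F(\zeta)=T(t\,\mathrm{e}^{i\theta\zeta})$ on the strip $0\le\mathrm{Re}\,\zeta\le 1$, tested against $C_c(X)$ functions. At $\mathrm{Re}\,\zeta=1$ the argument of $t\,\mathrm{e}^{i\theta\zeta}$ is $\theta<\pi/2$, so Step~1 applies and $F$ is bounded on $\ell^2$. At $\mathrm{Re}\,\zeta=0$ we have $|{\rm arg}|=0$, but $t\,\mathrm{e}^{i\theta\zeta}=t\,\mathrm{e}^{-\theta\,\mathrm{Im}\zeta}$ is a positive real, so $F$ is bounded on $\ell^{p_0}$ by Step~2. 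The Stein interpolation theorem then shows that $T(z)$ is bounded on $\ell^p$, uniformly for $|\arg z|\le\theta(1-\alpha)$, where $1/p=(1-\alpha)/p_0+\alpha/2$.

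The main obstacle is justifying the admissible-growth hypothesis of Stein's theorem in the interior of the strip. For this I test against $f,g\in C_c(X)$ and use the $\ell^2$ bound together with Cauchy--Schwarz; this gives a bound growing at most like $\mathrm{e}^{c|\zeta|}$, which is admissible. Holomorphy of $z\mapsto\langle T(z)f,g\rangle$ comes from Step~1. The uniform bound in a sector, combined with strong continuity of $S_p$, yields a bounded analytic extension on $\ell^p$ by density and the standard criterion \cite[Chapter~II Theorem~4.6]{EngelNagelShort}.

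\emph{Step 4 (conclusion).} By the discussion before the proposition, $T(t)=\mathrm{e}^{-\omega t}S_p(t)$ maps $\ell^p$ into $D(H_p)$ and is differentiable in norm for $t>0$. Hence $\omega_t=S_p(t)\omega$ is norm-differentiable with $\frac{d}{dt}\omega_t=-H_p\omega_t$. Point evaluations are continuous on $\ell^p$, so this gives pointwise continuity on $[0,\infty)$ (using strong continuity at $t=0$), pointwise differentiability on $(0,\infty)$, and \eqref{PE} at every $x\in X$.
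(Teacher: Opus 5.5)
Your proposal is correct and follows the paper's proof. You rescale so that the $\ell^2$-semigroup is contractive and analytic while the real-time semigroup is bounded on an endpoint $\ell^{p_0}$ (via Theorems~\ref{formboundedpotential}, \ref{boundedpotential} or \ref{lpextend}), apply Stein interpolation to get analyticity on $\ell^p$ for $p$ strictly between $2$ and $p_0$, and use that analytic semigroups map into $D(H_p)$ to obtain \eqref{PE}. One slip: with $1/p=(1-\alpha)/p_0+\alpha/2$ the line $\mathrm{Re}\,\zeta=\alpha$ corresponds to $\arg z=\theta\alpha$, so the sector you get on $\ell^p$ is $|\arg z|\le\theta\alpha$, not $\theta(1-\alpha)$ (the latter would wrongly give analyticity at the endpoint $p_0$); this is harmless, since $\alpha>0$ for the $p$ in question.
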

\begin{proof}
By the spectral theorem the semigroup $S_2$ is analytic and contracting. By Theorem~\ref{formboundedpotential},~\ref{boundedpotential} and~\ref{lpextend}, we get that $\|S_2(t)\|_{p,p}\leq C e^{At}$ for respective $p$ and constants $A,C\ge 0.$ Then the rescaled semigroup $T(t) = \mathrm{e}^{-tA}S_2(t)$ is still contracting and analytic on $\ell^2$ and bounded on $\ell^p.$ A standard application of Stein's interpolation theorem (see \cite[Theorem~1 of Chaper III §2]{Stein} or \cite[Theorem~6.6]{Lunardi}) yields that $T$ extends to an analytic semigroup on $\ell^r$ for all $r$ strictly between $2$ and $p.$ Thus, $T$ solves the modified parabolic equation from above for all initial conditions in $\ell^r$ and, therefore, $S_r$ solves \eqref{PE} for all initial conditions in $\ell^r.$ 
\eat{Stein's interpolation theorem gives analyticity for contraction semigroups \cite[Theorem 1.4.2?]{Daviesbook} whenever $C\le 1$ and $A\le 0$. It is well known that this result can be extended to arbitrary constants  $C\ge 0,A\in\mathbb{R}$ by the discussion of \cite[Section~7?]{Ahrendt}. 
Now solvability of the parabolic equation follows as analytic semigroups map $\ell^p$ into the domain of their generator \cite[4.6 Theorem~(c)]{EngelNagel-oneparameter}. Note that in \cite{EngelNagel-oneparameter} states the result for uniformly bounded semigroups which can however be mitigated by normalizing $S_p$ via $T_p=e^{At}S_p$ cf.~\cite[Section~7?]{Ahrendt}.}
\end{proof}

\eat{
\begin{remark}
    We make no effort here to optimize the specific angle of analyticity of the respective semigroups. Since our arguments rely on Stein's interpolation theorem, one obtains the usual angle $\theta = \pi(1-1\slash p)$ for $1<p<2$ and duality yields angles for $2<p<\infty.$ However, it seems plausible that in the situation of Proposition \ref{analyticity} (a) one could adapt the result from \cite[Theorem 2.1]{LiSe} to obtain a potentially better angle that depends on the constants in the form bound for $c.$
\end{remark}}

\section{Independence of the $\ell^p$-Spectrum}
\label{s:p-independence}

In the previous section, we discussed when the $\ell^2$ semigroup $S_2$ of a magnetic Schrödinger graph $(b,o,c)$ over $(X,m)$ extends to a strongly continuous semigroup $S_p$ on $\ell^p.$  Now, we study the stability of the spectrum $\sigma( H_p)$ under variation of $p.$ 

Assume that $S_2$ extends to a strongly continuous semigroup $S_p$ on $\ell^p$ for some  $1\leq p<\infty$. Then $ S_{p} $ is \emph{exponentially bounded}, i.e., \[\Vert S_p(t)\Vert_p\leq M e^{A t}\] for some $M\geq 1$, $A\in\mathbb{R}$, see \cite[Proposition~1.4]{EngelNagelShort}. 
This gives us the representation of the resolvents via the \emph{Laplace transform}
\[( H_p-\lambda)^{-1}  f= \int_0^\infty\mathrm{e}^{\lambda t}S_p(t) f\mathrm{d}t\] for all $\mathrm{Re}\lambda<-A$ (see \cite[Theorem~1.10]{EngelNagelShort}), where the integral is understood as an improper Riemann integral. In particular, we see that those $\lambda$  are in the resolvent set denoted by $\rho( H_p)=\mathbb{C}\setminus \sigma(H_p).$ Let us discuss this in the case of (sub-)exponential volume growth. 

\begin{example}[(Sub-)exponential volume growth]\label{exampleexpvolgrowth}
 According to the previous section, for graphs with exponential volume growth with exponent $\nu>0$, the semigroup $S_1$ is strongly continuous and exponentially bounded, i.e., for $\beta>3\nu\slash 2$, we have  $\Vert S_1(t)\Vert_{1,1}\leq M_\beta\mathrm{e}^{(C_\beta -\lambda_{2})t}$  for a constant $M_\beta$ and $C_\beta= s^{-2}(\cosh(\beta s)-1)$ as in Corollary~\ref{DGGCorollary}.
  As $S_2$ is a contraction semigroup with $\Vert S_2(t)\Vert_{2,2}\leq \mathrm{e}^{-\lambda_{2}t}$, we conclude from the Riesz-Thorin interpolation theorem and duality arguments that \[\Vert S_p(t)\Vert_p\leq M_\beta^{\frac{2-r}{r}}\mathrm{e}^{\left(\frac{2-r}{r}C_\beta -\lambda_{2}\right) t},\] where $r = \min\lbrace p,p\slash(p-1)\rbrace.$
By the discussion above, we have for all $\mathrm{Re}\lambda<-\frac{2-r}{r}C_\beta +\lambda_{2}$ that $\lambda\in\rho( H_p).$ Specifically, since $C_\beta = s^{-2}(\cosh(\beta s)-1)$ and $\beta>3\nu/2$, we obtain the estimate
\[\inf\sigma( H_2)\leq \inf\mathrm{Re}\,\sigma( H_p) +\frac{2-r}{rs^{2}} (\cosh(3\nu s/2)-1),
 \]
 where $s$ is the jump size of the intrinsic metric $d.$
As we noted in the remark after Theorem \ref{dgg}, we find that the constant $C_\beta$ gets arbitrarily small for small $\beta >0.$ Hence, in case of uniform subexponential volume growth, we may deduce that for all $1\leq p\leq\infty$ and $\varepsilon >0$  there is $M=M_{\varepsilon,p}>0$ such that
\[\Vert S_p(t)\Vert_p\leq M \mathrm{e}^{\varepsilon t}\]
and 
\[\inf\sigma( H_2)\leq \inf\mathrm{Re}\,\sigma( H_p).\] 
This is a first glimpse at the stability of the $\ell^p$-spectrum.
\end{example}

\subsection{Uniformly Positive Measure and Spectral Inclusion}
In this section we discuss the spectral inclusion of the $\ell^2$-spectrum in the $\ell^p$-spectrum in case of a uniformly positive measure, i.e., $\inf_x m(x)>0.$ This assumption implies that $\ell^p\subseteq\ell^q$ for all $1\leq p\leq q\leq\infty.$ 

\begin{theorem}\label{finitemeasure}  Let $ 1\leq p< \infty $. Assume that  $ S_{2} $ extends consistently to a strongly continuous semigroup $S_p$ on $ \ell^{p} $, $p\in[1,\infty)$,  and  \[\inf_{x\in X} m(x)>0.\] Then,
  \[\sigma( H_2)\subseteq\sigma( H_p).\]
\end{theorem}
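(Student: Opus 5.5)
Since $\inf_x m(x)>0$, we have $\ell^p\subseteq\ell^q$ with continuous inclusion for $p\le q$: $\|f\|_\infty\le (\inf m)^{-1/p}\|f\|_p$, and interpolation gives the rest. The plan is to prove the contrapositive, $\rho(H_p)\subseteq\rho(H_2)$, using the fact that $\sigma(H_2)$ is real and self-adjoint spectra are detected by approximate eigenvectors. We treat $p\le 2$ and $p>2$ separately.

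\emph{Case $p\le 2$.} Here $\ell^p\subseteq\ell^2$ continuously. Fix $\lambda\in\rho(H_p)$ and set $R_p=(H_p-\lambda)^{-1}$. By consistency of $S_p$ and $S_2$ and the Laplace transform, $R_p$ and $(H_2-\mu)^{-1}$ agree on $\ell^p\cap\ell^2=\ell^p$ for $\mathrm{Re}\,\mu$ very negative. The resolvent identity and analytic continuation along the connected component of $\rho(H_p)\cap\rho(H_2)$ would extend this agreement, but that alone does not reach $\lambda$. A cleaner route is the following.
\begin{itemize}
\item Show that $H_2$ restricted to $\ell^p$-vectors coincides with $H_p$. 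Concretely, $D(H_p)=\{f\in \ell^p\cap D(H_2): H_2f\in\ell^p\}$, which follows from the agreement of resolvents at one point $\mu$.
\item Since $\lambda\in\rho(H_p)$, the operator $R_p$ is bounded $\ell^p\to\ell^p\subseteq\ell^2$.
\item Pass to $\ell^2$ by duality: $R_p^*=(H_q-\bar\lambda)^{-1}$ on $\ell^q\supseteq\ell^2$, and $H_q$ is consistent with $H_2$.
\end{itemize}
Combining the bounds $\ell^p\to\ell^2$ (from $R_p$) and $\ell^2\to\ell^q$ (from the dual resolvent) with Riesz--Thorin-type interpolation between $p$ and $q$ (note $2$ lies between them), we would show that the resolvent extends boundedly to $\ell^2$ and inverts $H_2-\lambda$. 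For $p>2$ the same argument applies with the roles of $p$ and $q$ exchanged, since $\sigma(H_p)=\overline{\sigma(H_q)}$ and $\sigma(H_2)$ is real.

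\emph{Key step (interpolation).} The main point is that $\sigma(H_q)=\overline{\sigma(H_p)}$, so $\lambda\in\rho(H_p)\cap\rho(H_q)$ whenever $\lambda$ is real or we replace $\lambda$ by $\bar\lambda$. The family $R_p$, $R_q$ is consistent on $\ell^p\cap\ell^q$, which needs the analytic continuation argument on the component of the common resolvent set containing a left half-plane. Riesz--Thorin then gives a bounded operator $R_2$ on $\ell^2$. One checks $(H_2-\lambda)R_2=I$ on the dense set $\ell^p\cap\ell^2$ and uses closedness of $H_2$, giving $\lambda\in\rho(H_2)$.

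\emph{Main obstacle.} Consistency of the resolvents may fail across components, so this is exactly where I expect difficulty. In that case I would switch to the Weyl-sequence approach. For $\lambda\in\sigma(H_2)\subseteq\mathbb{R}$, pick $f_n\in D(H_2)$ with $\|f_n\|_2=1$ and $\|(H_2-\lambda)f_n\|_2\to0$. Then transfer to $\ell^q$ for $q\ge 2$ via $\|g\|_q\le C\|g\|_2$ (uniform positivity). The difficulty is keeping $\|f_n\|_q$ bounded below, which would need extra localization; this lower bound is the point I expect to be hardest.
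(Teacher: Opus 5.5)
Your proposal has a genuine gap, exactly where you place the main obstacle. You never show that $(H_p-\lambda)^{-1}$ and $(H_q-\lambda)^{-1}$ agree on $\ell^p\cap\ell^q$ for an arbitrary real $\lambda\in\rho(H_p)$. Without this, Riesz--Thorin has no single operator to interpolate. (The sentence about combining an $\ell^p\to\ell^2$ bound for $R_p$ with an $\ell^2\to\ell^q$ bound for the dual resolvent is not a valid interpolation step. What is needed is one consistent operator bounded on both $\ell^p$ and $\ell^q$, as your later paragraph says.)

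Your worry about components is justified for the route you propose. Analytic continuation from a left half-plane only gives agreement on the component of $\rho(H_p)\cap\rho(H_q)$ containing that half-plane. Since $\sigma(H_p)$ is not known to be real in this theorem, a real point of $\rho(H_p)$ could a priori lie in a different component. The Weyl-sequence fallback is also left open: you give no way to keep $\|f_n\|_q$ bounded below, and for spread-out $\ell^2$-normalized sequences one has $\|f_n\|_q\to 0$ when $q>2$.

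The missing idea is that under $\inf_x m(x)>0$ this consistency is purely algebraic and holds at every point of $\rho(H_p)\cap\rho(H_q)$.

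\begin{itemize}
\item For $p\le 2$, your first bullet already gives half of it: agreement of the resolvents at one point $\mu$ yields $H_p\subseteq H_2$.
\item The other half is $H_2\subseteq H_q$, by duality. For $f\in D(H_2)$ and $g\in D(H_p)\subseteq D(H_2)$, self-adjointness gives $\langle H_2 f,g\rangle=\langle f,H_2 g\rangle=\langle f,H_p g\rangle$. Hence $f\in D(H_p^*)=D(H_q)$ and $H_q f=H_2 f$.
\item So $H_p\subseteq H_q$, and $\ell^p\subseteq\ell^q$. For $z\in\rho(H_p)\cap\rho(H_q)$ and $f\in\ell^p$, both $(H_p-z)^{-1}f$ and $(H_q-z)^{-1}f$ lie in $D(H_q)$ and solve $(H_q-z)u=f$. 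They therefore coincide by injectivity of $H_q-z$.
\end{itemize}

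Interpolation then gives a bounded operator $R$ on $\ell^2$, which agrees with $(H_q-z)^{-1}$ there. The identity $(H_2-z)R=I$ follows from your closedness argument together with $H_p\subseteq H_2$. The identity $R(H_2-z)f=(H_q-z)^{-1}(H_q-z)f=f$ for $f\in D(H_2)$ uses $H_2\subseteq H_q$. Thus $\rho(H_p)\cap\rho(H_q)\subseteq\rho(H_2)$. Because $\sigma(H_2)\subseteq\mathbb{R}$ and $\sigma(H_p)\cap\mathbb{R}=\sigma(H_q)\cap\mathbb{R}$, this yields $\sigma(H_2)\subseteq\sigma(H_p)$. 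This is the paper's argument, and it needs no analytic continuation.
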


\begin{proof}
Note that if $S_2$ extends consistently to a strongly continuous semigroup $S_p$ on $\ell^p$, $p\in[1,\infty)$,  by duality we also have that $S_2$ extends to a strongly continuous semigroup $S_q$ on $\ell^q,$ where $q$ is the respective H\"older dual of $p\neq 1$ and to a weak-$\ast$-continuous semigroup on $\ell^\infty$ if $p=1$. In either case, we have $  \sigma( H_p)\cap\mathbb{R} = \sigma( H_q)\cap\mathbb{R}.$ Thus, as the $\ell^2$-spectrum is real, we can assume without loss of generality $1\leq p\leq 2$ and let $q$ be its Hölder dual.  We make  the following claim:\medskip

    \emph{Claim:} $ H_p\subseteq H_2\subseteq H_q.$\medskip
    
If the claim holds, then the resolvents $( H_p-z)^{-1}$ and $( H_q-z)^{-1}$ are consistent for all $z\in \rho( H_p)\cap\rho( H_q)$ because for all $f\in\ell^p\subseteq \ell^q$
\[( H_q-z)( H_p-z)^{-1}f = ( H_p-z)( H_p-z)^{-1}f = f= ( H_q-z)( H_q-z)^{-1}f \]
and since $( H_{q}-z)$ is injective.

Now, take $z\in\rho( H_p)\cap{\rho( H_q)}.$ Then we conclude by consistency of the resolvents and interpolation, that $( H_q-z)^{-1}$ is bounded as an operator on $\ell^2.$ Moreover, it is an inverse of $ H_2-z$ and, therefore, $z\in\rho( H_2)$: To see this, take $f\in D( H_2).$ It follows from the claim that 
\[( H_q-z)^{-1}( H_2-z)f = ( H_q-z)^{-1}( H_q-z)f = f.\]
On the other hand, for $f\in\ell^2$ choose $f_n\in\ell^p,n\geq 0$, such that $f_n\rightarrow f$ in $\ell^2.$ Then $( H_q-z)^{-1}f_n = ( H_p-z)^{-1}f_n\in D( H_p)\subseteq D( H_2)$ and by the claim
\[( H_2-z)( H_q-z)^{-1}f_n = ( H_q-z)( H_q-z)^{-1}f_n = f_n\rightarrow f.\] Since $( H_q-z)^{-1}$ is bounded on $\ell^2$ and $ H_2$ is closed, we conclude that $(H_q-z)^{-1}f\in D(H_2)$ and
\[( H_2-z)( H_q-z)^{-1}f = f.\]
Hence, $\rho(H_p)\cap\rho(H_q)\subseteq\rho(H_2)$ and therefore $\sigma(H_2)\subseteq \sigma(H_p)\cup\sigma(H_q).$ Since $\sigma(H_2)$ is real and $\sigma(H_p) \cap\mathbb{R}= \sigma(H_q)\cap\mathbb{R},$ the assertion follows.\\
It remains to show the claim. By assumption, the semigroup $S_p $ is exponentially bounded and consistent with $S_2$. Thus, we infer that the resolvents $( H_p+\alpha)^{-1}$ and $( H_2+\alpha)^{-1}$ exist and are consistent for large $\alpha>0,$ which is a simple consequence of the Laplace transform. Let $f\in D( H_p).$ Then, as $\ell^p\subseteq \ell^2,$
\[f = ( H_p+\alpha)^{-1}( H_p+\alpha)f = ( H_2+\alpha)^{-1}( H_p+\alpha)f\in D( H_2).\]
Applying $ H_2+\alpha$ to both sides yields
\begin{align*}
    ( H_2+\alpha) f &=   ( H_p+\alpha)f,
\end{align*}
which implies the first inclusion of the claim. The second inclusion follows  by duality: Let $f\in D( H_2).$ Then, for all $g\in D( H_p)\subseteq D( H_2)$, $1\leq p\leq 2$, we have by self-adjointness of $ H_2$
\[\langle   H_2 f, g\rangle = \langle f,  H_2 g\rangle = \langle  f,  H_p g\rangle.\]
Hence, $f\in D(( H_p)^*) = D( H_q)$ and $ H_2 f =  H_q f.$ This finishes the proof of the claim and, thus, of the theorem.
\end{proof} 

\begin{remark}
    If $S_2$ extends to a strongly continuous semigroup on $\ell^1$, then by duality, we also have
    \[\sigma( H_2)\subseteq  \sigma( H_1)\cap\mathbb{R} = \sigma( H_\infty)\cap\mathbb{R} \]
    and, therefore, $\sigma( H_2)\subseteq\sigma( H_\infty)$ as well. 
\end{remark}

The theorem above has the following immediate consequence in case of  subexponential volume growth.

\begin{corollary}
    If the graph has uniform subexponential volume growth   and uniformly positive measure $\inf_xm(x)>0,$ then, for all $1\leq p\leq\infty,$
    \[\inf\sigma( H_2) = \inf\mathrm{Re}\,\sigma( H_p).\]
\end{corollary}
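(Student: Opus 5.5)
We combine the lower bound from Example~\ref{exampleexpvolgrowth} with the spectral inclusion of Theorem~\ref{finitemeasure}. Uniform subexponential volume growth implies exponential volume growth with every rate $\nu>0$. By Theorem~\ref{lpextend}, $S_2$ therefore extends consistently to a strongly continuous semigroup $S_p$ on $\ell^p$ for all $1\le p<\infty$, and to a weak-$\ast$-continuous semigroup $S_\infty=(S_1)^*$ on $\ell^\infty$. So all the hypotheses of Theorem~\ref{finitemeasure} hold for every $p\in[1,\infty)$.

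\emph{Step 1 (the inequality $\le$).} Theorem~\ref{finitemeasure} gives $\sigma(H_2)\subseteq\sigma(H_p)$ for $1\le p<\infty$. For $p=\infty$ we use the remark following that theorem: $\sigma(H_2)\subseteq\sigma(H_1)\cap\mathbb{R}=\sigma(H_\infty)\cap\mathbb{R}$, since $H_\infty=H_1^*$ and spectra are preserved under Banach space adjoints. Hence $\lambda_2=\inf\sigma(H_2)$ lies in $\sigma(H_p)$, because the spectrum of the self-adjoint operator $H_2$ is closed and, as $H_2\ge 0$, nonempty and bounded below. Since $\lambda_2$ is real, $\inf\mathrm{Re}\,\sigma(H_p)\le\lambda_2$.

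\emph{Step 2 (the inequality $\ge$).} This is the content of Example~\ref{exampleexpvolgrowth}. For every $\varepsilon>0$ we choose $\beta>0$ small enough that $C_\beta=s^{-2}(\cosh(\beta s)-1)<\varepsilon$. This is possible because $C_\beta\to0$ as $\beta\to0$. Lemma~\ref{LemmaVolGrowth} still applies for this $\beta$, since in the subexponential case it holds for all $\beta>0$. Theorem~\ref{lpextend} then gives $\|S_1(t)\|_{1,1}\le M_\beta e^{(C_\beta-\lambda_2)t}$. Riesz--Thorin interpolation with $\|S_2(t)\|_{2,2}\le e^{-\lambda_2 t}$, together with duality, yields $\|S_p(t)\|_{p,p}\le M e^{(\varepsilon-\lambda_2)t}$ for all $p\in[1,\infty]$.

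The Laplace transform representation of the resolvent then shows that every $\lambda$ with $\mathrm{Re}\,\lambda<\lambda_2-\varepsilon$ lies in $\rho(H_p)$ for $p<\infty$. For $p=\infty$ we use that $\sigma(H_\infty)=\sigma(H_1)$, because $H_\infty=H_1^*$. Letting $\varepsilon\to0$ gives $\inf\mathrm{Re}\,\sigma(H_p)\ge\lambda_2$.

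The only delicate point is making sure $C_\beta$ can be made arbitrarily small while the volume sum in Lemma~\ref{LemmaVolGrowth} remains finite, and this is exactly where subexponential rather than merely exponential growth is used. The case $p=\infty$ also needs the adjoint argument above, because $S_\infty$ is only weak-$\ast$ continuous.
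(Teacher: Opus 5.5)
Your proposal is correct and is essentially the paper's argument. The paper's proof combines the spectral inclusion of Theorem~\ref{finitemeasure}, which gives $\lambda_2\in\sigma(H_p)$ and hence $\le$, with the exponential bound and Laplace transform from Example~\ref{exampleexpvolgrowth}, which give $\ge$. You do exactly this; the only quibble is that under the paper's sesquilinear duality convention $\sigma(H_\infty)$ is the complex conjugate of $\sigma(H_1)$ rather than equal to it, which does not affect $\inf\mathrm{Re}$.
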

\begin{proof}
    This follows directly from Theorem \ref{finitemeasure} and Example~\ref{exampleexpvolgrowth} prior to it.
\end{proof}

\subsection{Independence of the $\ell^p$-Spectrum}

In this section we study  the independence of the $\ell^p$-spectrum in the case of form bounded potentials and  subexponential volume growth.


\begin{theorem}[Independence of $\ell^p$-spectrum]\label{lpIndependence}
   If the potential is form bounded  and the graph has uniform subexponential volume growth, then, for all $1\leq p\leq\infty$, \[\sigma( H_2) = \sigma( H_p).\] 
\end{theorem}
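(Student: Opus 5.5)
We follow the classical strategy of Hempel--Voigt and Sturm, adapted to the discrete setting: we prove two inclusions, $\sigma(H_p)\subseteq\sigma(H_2)$ via weighted estimates and $\sigma(H_2)\subseteq\sigma(H_p)$ via duality and interpolation. Throughout, subexponential volume growth and Theorem~\ref{lpextend} give consistent semigroups $S_p$ for all $p\in[1,\infty]$. Example~\ref{exampleexpvolgrowth} gives $\|S_p(t)\|_{p,p}\le M_\varepsilon e^{\varepsilon t}$ for every $\varepsilon>0$. By duality, $\sigma(H_p)=\overline{\sigma(H_q)}$, so it suffices to treat $1\le p\le 2$.

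\emph{Step 1 ($\sigma(H_p)\subseteq\sigma(H_2)$).} Fix $z\in\rho(H_2)$. We show that the $\ell^2$-resolvent $(H_2-z)^{-1}$ is bounded on $\ell^1$; Riesz--Thorin interpolation then gives boundedness on $\ell^p$, $1\le p\le 2$. The key tool is an exponentially weighted $\ell^2$ estimate. For a bounded Lipschitz function $\omega$ with small constant $\kappa$, consider the twisted operator $e^{\omega}H_2e^{-\omega}$. Its form differs from $Q$ by a perturbation of relative size $O(\cosh(\kappa s)-1)$, which tends to $0$ as $\kappa\to 0$; this is the same computation as in Lemma~\ref{impfinite}. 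Hence for $\kappa$ small, depending on $\mathrm{dist}(z,\sigma(H_2))$, $z$ remains in the resolvent set of the twisted operator, uniformly in $\omega$. Form boundedness of $c_-$ is used here: it makes $Q$ and $q$ equivalent, so the perturbation is controlled relative to $Q+C$ and $D(Q)$ is preserved. This yields a Combes--Thomas estimate
\[
|\langle (H_2-z)^{-1}1_x,1_y\rangle|\le C_z\sqrt{m(x)m(y)}\,e^{-\kappa d(x,y)}.
\]
Combined with Lemma~\ref{LemmaVolGrowth}, which for subexponential growth holds for every $\kappa>0$, the Schur test gives boundedness on $\ell^1$. We must then check that this bounded operator is the resolvent of $H_1$. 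This follows from consistency of the resolvents for large real $-z$ and analytic continuation of the bounded extension on the connected set $\rho(H_2)$ (which is connected since $\sigma(H_2)\subseteq\mathbb{R}$). To make the continuation argument work, the kernel bounds must be locally uniform in $z$.

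\emph{Step 2 ($\sigma(H_2)\subseteq\sigma(H_p)$).} Take $z\in\rho(H_p)$; since $\sigma(H_q)=\overline{\sigma(H_p)}$ and $\sigma(H_2)\subseteq\mathbb{R}$, we may work with $\rho(H_p)\cap\rho(H_q)$ near real points. We argue as in the proof of Theorem~\ref{finitemeasure}, with the inclusions of $\ell^p$ spaces replaced by consistency on $\ell^p\cap\ell^2$. The resolvents $(H_p-z)^{-1}$ and $(H_q-z)^{-1}$ agree on $\ell^p\cap\ell^q$; this holds first for large $-z$ and then on the connected component containing $(-\infty,-A)$. For $p\le2\le q$, Riesz--Thorin gives an $\ell^2$-bounded operator, and this operator inverts $H_2-z$. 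To avoid the connectedness issue at real $z$, we use instead the known fact (Hempel--Voigt) that $\rho(H_p)$ is connected, which follows from Step 1 together with the fact that $\sigma(H_p)\subseteq\sigma(H_2)\subseteq\mathbb{R}$.

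\emph{Main obstacle.} The hard step is the Combes--Thomas estimate in Step 1 for unbounded operators with only form-bounded negative potential. The twisted form must be shown to be sectorial with the same form domain, and the bound must be uniform over bounded Lipschitz weights $\omega$, so that one can pass to $\omega=\kappa\, d(\cdot,x)\wedge R$ and let $R\to\infty$. We would first carry this out on finite sets $K_n$, using $H_{K_n}$ and the variational characterization, and then pass to the limit via Proposition~\ref{semiconv}. The passage is done through resolvents, which are obtained by Laplace transform for $\operatorname{Re} z$ very negative and extended by analytic continuation. An alternative to this would be the semigroup-based approach of Sturm, using Corollary~\ref{DGGCorollary} together with the subexponential growth bound.
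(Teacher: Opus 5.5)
Your argument is correct and follows the paper's overall plan:
\begin{itemize}
\item a weighted $\ell^2$ resolvent bound from Kato form perturbation of the twisted operator (Lemma~\ref{resolventbound}, where form boundedness of $c_-$ and Lemma~\ref{Q0} enter);
\item exponential off-diagonal decay of a resolvent kernel, followed by a Schur test via Lemma~\ref{LemmaVolGrowth};
\item unique continuation on the connected set $\rho(H_2)$ to get $\sigma(H_p)\subseteq\sigma(H_2)$;
\item the Hempel--Voigt consistency/interpolation/continuation argument for the reverse inclusion.
\end{itemize}
The real difference is that you work with $(H_2-z)^{-1}$ itself. The paper instead follows \cite{BauerHuaKeller} and \cite{Sturm}: it passes to $G_z^2=G_\alpha(I+(z-\alpha)G_z)^2G_\alpha$, which requires the DGG-based estimates of Lemma~\ref{Galpha}.

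Your shortcut is legitimate in the discrete setting. Since $\|1_x\|_2=m(x)^{1/2}$ (equivalently $\|m^{1/2}\|_{1,2}\le 1$ and $\|m^{1/2}\|_{2,\infty}\le 1$), take $\omega=\kappa\,(d(\cdot,x)\wedge d(x,y))$ in the bound $\|e^{\omega}(H_2-z)^{-1}e^{-\omega}\|_{2,2}\le C$. This immediately gives $|g_z(y,x)|\le C(m(x)m(y))^{-1/2}e^{-\kappa d(x,y)}$, locally uniformly in $z$. That is exactly the bound the paper proves for $g_z^{(2)}$ in Lemma~\ref{lp2bounded}. No limit $R\to\infty$ is needed.

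The squaring comes from the continuum, where resolvent kernels are singular on the diagonal. Here your route is shorter and makes Lemma~\ref{Galpha} unnecessary for this theorem. DGG is still used to construct $S_p$ and to get consistency of resolvents for $\mathrm{Re}\,z<0$.

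Three corrections to your sketch.

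\emph{The twisted form.} $Q(e^{-\omega}f,e^{\omega}f)-Q(f)$ is not just the $\cosh$-term of Lemma~\ref{impfinite}, which only sees the real part. It also contains
\[
i\sum_{x,y}b(x,y)\sinh(\nabla\omega(x,y))\,\mathrm{Im}\bigl(\overline{o(x,y)}f(x)\overline{f(y)}\bigr).
\]
This term is first order in $\kappa$ and in general not $\ell^2$-bounded. It must be absorbed by Cauchy--Schwarz into $\delta\bigl(Q(f)+\sum_X c_-|f|^2\bigr)+C\kappa^2\delta^{-1}\|f\|_2^2$, and this is precisely where form boundedness is used.

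\emph{The finite-set plan.} Drop the idea of proving the weighted bound on finite sets $K_n$ and passing to the limit. For $z$ in a gap of $\sigma(H_2)$, the truncations $H_{K_n}$ may have eigenvalues at or near $z$, so no bound uniform in $n$ is available. Analytic continuation from $\mathrm{Re}\,z\ll0$ transports identities, not norm bounds. Work on $\ell^2$ directly, as the paper does:
\begin{itemize}
\item Lemma~\ref{Q0} gives $e^{\pm\omega}D(Q)\subseteq D(Q)$;
\item Kato's theorem then yields the bound uniformly on compact subsets of $\rho(H_2)$;
\item $H_\omega=e^{\omega}H_2e^{-\omega}$ is a direct computation.
\end{itemize}

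\emph{Step 2.} Do not imitate the proof of Theorem~\ref{finitemeasure}, which relies on $\ell^p\subseteq\ell^2\subseteq\ell^q$. Conclude as the paper does, from two facts: the interpolated resolvent is $\mathcal{B}(\ell^2)$-analytic on $\rho(H_p)$ \cite[Theorem~1.2]{HempelVoigt1987}, and $\rho(H_2)$ is the maximal domain of analyticity of $(H_2-z)^{-1}$.
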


The proof of the theorem above is similar to the graph case, \cite{BauerHuaKeller}, and the case of manifolds \cite{Sturm}.
The key estimate we need is of the form 
\[\Vert\mathrm{e}^{-\psi}( H_2-z)^{-1}\mathrm{e}^\psi\Vert_{2,2}<C\] on compact subsets of $\rho( H_2),$ where $\psi$ is an arbitrary real bounded Lipschitz function with respect to the metric $d$ and fixed Lipschitz constant $\varepsilon>0.$ But first, we need a technical lemma which is found for Dirichlet forms in \cite[Lemma~3.5]{GHM}.


\begin{lemma}\label{Q0} Assume the potential $c$ is form bounded.
    Let $ f\in D(Q)$ and $\psi$ be a bounded Lipschitz function. Then, $\psi f\in D(Q).$ 
\end{lemma}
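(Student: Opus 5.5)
We approximate: first show that $\psi\varphi$ lies in $D(Q)$ with a quantitative bound for $\varphi\in C_c(X)$, then pass to limits using closedness of $Q$. For $\varphi\in C_c(X)$, $\psi\varphi\in C_c(X)\subseteq D(Q)$ trivially. The point is a bound
\[Q(\psi\varphi)+\Vert\psi\varphi\Vert_2^2\le C_\psi\big(Q(\varphi)+\Vert\varphi\Vert_2^2\big)\]
with $C_\psi$ depending only on $\Vert\psi\Vert_\infty$, the Lipschitz constant $\kappa$ and the form bounds $M,C$. Given this, take $f\in D(Q)$ and $\varphi_n\in C_c(X)$ with $\varphi_n\to f$ in the form norm. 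Then $\psi\varphi_n\to\psi f$ in $\ell^2$, since $\psi$ is bounded. Moreover, $\psi\varphi_n$ is Cauchy in the form norm, because the bound applies to $\varphi_n-\varphi_m$ by linearity of $\varphi\mapsto\psi\varphi$. By closedness of $Q$ we get $\psi f\in D(Q)$.

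To prove the bound, we split $Q=q-\sum_X c_-|\cdot|^2$, where $q$ is the positive part containing the magnetic energy and $c_+$. The potential parts are easy: $\sum c_+|\psi\varphi|^2\le\Vert\psi\Vert_\infty^2\sum c_+|\varphi|^2$, and we drop the $-c_-$ term from above. For the magnetic energy we write
\[\nabla_o(\psi\varphi)(x,y)=\psi(x)\nabla_o\varphi(x,y)+o(x,y)\varphi(y)\nabla\psi(x,y).\]
This gives
\[\tfrac12\sum b|\nabla_o(\psi\varphi)|^2\le \Vert\psi\Vert_\infty^2\sum b|\nabla_o\varphi|^2+\kappa^2\sum_{x,y} b(x,y)d(x,y)^2|\varphi(y)|^2.\]
The last term is at most $\kappa^2\Vert\varphi\Vert_2^2$ by intrinsicness of $d$ and symmetry of $b$. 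Hence $Q(\psi\varphi)\le C'\big(q(\varphi)+\Vert\varphi\Vert_2^2\big)$.

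The main obstacle is controlling $q(\varphi)$ by $Q(\varphi)$, since $Q$ alone does not bound the positive part. This is where form boundedness enters. We have $q(\varphi)=Q(\varphi)+\Vert\sqrt{c_-/m}\varphi\Vert_2^2\le(1+M)Q(\varphi)+C\Vert\varphi\Vert_2^2$, which closes the estimate. So the whole argument is elementary once the decomposition is set up. Using $Q\ge0$ ensures all quantities are nonnegative, so the form norm $Q+\Vert\cdot\Vert_2^2$ is indeed a norm on $D(Q)$ and the closedness argument applies.
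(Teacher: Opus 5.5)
Your proposal is correct and follows essentially the same route as the paper. Both arguments use the product rule for $\nabla_o(\psi\varphi)$, bound the $\nabla\psi$ term via the intrinsic metric, and invoke form boundedness to get $q(\varphi)\le (1+M)Q(\varphi)+C\Vert\varphi\Vert_2^2$, which shows that $\psi\varphi_n$ is form-Cauchy whenever $\varphi_n$ is. Your splitting $\psi(x)\nabla_o\varphi+o\,\varphi(y)\nabla\psi$ is just the mirror image of the paper's $\psi(y)\nabla_o f+f(x)\nabla\psi$.
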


\begin{proof}
    Let $ f_n\in C_c(X),n>0$, be a Cauchy sequence with respect to the form norm $\Vert\cdot\Vert_{{Q}}=(Q(\cdot)+\|\cdot\|^2_2)^{1/2}.$ We need to show that $\psi f_n$ is such a Cauchy sequence as well. Obviously, $\psi f_n$ is an $\ell^2$-Cauchy sequence. Thus, all that remains to show is that for $\varepsilon>0$ and all $n,m$ large enough \[Q(\psi( f_n- f_m))<\varepsilon.\]
Let  $\kappa$ be the Lipschitz constant of $\psi$. For arbitrary $ f\in C_c(X)$,
\begin{align*}
   0 &\leq Q(\psi f) \leq \sum_{X\times X}b\vert\nabla_o\psi\vert^2 +\sum_ X (c\vee0)\vert\psi f\vert^2\\
    &= \sum_{x,y\in X}b(x,y)\vert f(x)(\psi(x)-\psi(y))+\psi(y)( f(x)-o(x,y) f(y))\vert^2+\sum_ X(c\vee 0)\vert\psi f\vert^2\\&\leq 2\kappa^2\sum_{x,y\in X}b(x,y)\vert f(x)\vert^2d(x,y)^2+2\Vert\psi\Vert^2_\infty\left(Q( f)-\sum_ X (c\wedge 0)\vert f\vert^2\right)\\&\leq 2(\kappa^2+C\Vert\psi\Vert^2_\infty)\Vert f\Vert_2^2+2 (M+1)\Vert\psi\Vert^2_\infty Q( f).
\end{align*}
Here, we used that $d$ is intrinsic and that $c$ is form bounded with bounds $M,C$. Applying this estimate to $ f= f_n- f_m$ finishes the proof.
\end{proof}

The following lemma is the key estimate for the proof of Theorem \ref{lpIndependence} and a variation of \cite[Lemma 4.1]{BauerHuaKeller} on graphs.

\begin{lemma}\label{resolventbound} Assume  the potential is  form bounded.
    Let $K\subseteq\rho( H_2)$ be compact. Then there are constants $\varepsilon,C>0$ such that, for all $z\in K$ and all bounded real Lipschitz functions $\psi$ 
    with Lipschitz constant $\varepsilon$, 
    one has 
    \[\Vert\mathrm{e}^{\psi}( H_2-z)^{-1}\mathrm{e}^{-\psi}\Vert_{2,2}<C.\]
\end{lemma}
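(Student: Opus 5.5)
The estimate will come from a perturbation argument for the twisted form $Q_\psi(f,g):=Q(\mathrm{e}^{\psi}f,\mathrm{e}^{-\psi}g)$, which by Lemma~\ref{Q0} is well defined on $D(Q)\times D(Q)$ for bounded Lipschitz $\psi$. The associated operator is $H_\psi=\mathrm{e}^{-\psi}H_2\mathrm{e}^{\psi}$ with domain $\mathrm{e}^{-\psi}D(H_2)$. Since multiplication by $\mathrm{e}^{\pm\psi}$ is bounded and bijective on $\ell^2$, we get
\[
\mathrm{e}^{\psi}(H_2-z)^{-1}\mathrm{e}^{-\psi}=(\mathrm{e}^{\psi}H_2\mathrm{e}^{-\psi}-z)^{-1}.
\]
The plan is therefore to show that $\mathrm{e}^{\psi}H_2\mathrm{e}^{-\psi}-z$ is boundedly invertible with a uniform bound once the Lipschitz constant $\varepsilon$ is small. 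No bound may depend on $\Vert\psi\Vert_\infty$.

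The first step is to compute the difference of forms. For $f,g\in C_c(X)$ the potential term cancels, and
\[
Q(\mathrm{e}^{-\psi}f,\mathrm{e}^{\psi}g)-Q(f,g)=\frac12\sum_{x,y}b(x,y)\Big[\nabla_o(\mathrm{e}^{-\psi}f)\overline{\nabla_o(\mathrm{e}^{\psi}g)}-\nabla_of\,\overline{\nabla_og}\Big].
\]
Expanding $\nabla_o(\mathrm{e}^{\pm\psi}f)(x,y)=\mathrm{e}^{\pm\psi(x)}\nabla_of(x,y)+o(x,y)f(y)\big(\mathrm{e}^{\pm\psi(x)}-\mathrm{e}^{\pm\psi(y)}\big)$ shows that every difference term carries a factor $|1-\mathrm{e}^{\pm(\psi(x)-\psi(y))}|\le \varepsilon d(x,y)\mathrm{e}^{\varepsilon s}$, using the jump size $s$. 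Cauchy--Schwarz and the intrinsic metric property $\sum_y b(x,y)d(x,y)^2\le m(x)$ then give
\[
|Q_\psi(f,g)-Q(f,g)|\le C_1\varepsilon\big(q(f)^{1/2}+\Vert f\Vert_2\big)\big(q(g)^{1/2}+\Vert g\Vert_2\big),
\]
where $q=Q+c_-/m$ is the positive part and $C_1$ depends only on $s$ (for $\varepsilon\le1$). The form bound gives $q(f)\le (M+1)Q(f)+C\Vert f\Vert_2^2$, so the perturbation is bounded by $C_2\varepsilon\Vert f\Vert_Q\Vert g\Vert_Q$ in the form norm of $Q$. This extends to $D(Q)$ by Lemma~\ref{Q0} and density.

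The second step is to invert. Let $R_z=(H_2-z)^{-1}$ and $J=(H_2+1)^{1/2}$, and write $Q_\psi-Q=\langle B J\cdot,J\cdot\rangle$ with $\Vert B\Vert_{2,2}\le C_2\varepsilon$. Then
\[
H_\psi-z=J\big(1+BJR_zJ\big)J^{-1}(H_2-z)
\]
in the form sense. Since $\Vert JR_zJ\Vert=\sup_{\lambda\in\sigma(H_2)}(\lambda+1)/|\lambda-z|$ is bounded uniformly for $z$ in the compact set $K\subseteq\rho(H_2)$, say by $C_K$, a Neumann series gives invertibility as soon as $C_2\varepsilon C_K<1/2$. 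The resulting inverse $(H_\psi-z)^{-1}=R_zJ(1+BJR_zJ)^{-1}J^{-1}$ is bounded on $\ell^2$ by $2C_K$. Taking $\psi\mapsto-\psi$ gives the claimed operator; it is the one we need, because $\mathrm{e}^{\psi}(H_2-z)^{-1}\mathrm{e}^{-\psi}$ is the inverse of $\mathrm{e}^{\psi}H_2\mathrm{e}^{-\psi}-z$, whose form is $Q(\mathrm{e}^{-\psi}\cdot,\mathrm{e}^{\psi}\cdot)$.

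The main obstacle is making the operator identification rigorous. One has to check that the operator defined via the bounded perturbation of the form is exactly $\mathrm{e}^{\psi}H_2\mathrm{e}^{-\psi}$ on its natural domain, and in particular that $R_zJ$ maps into $D(Q)$ so that the factorization makes sense. This is where Lemma~\ref{Q0} is essential, since it guarantees that $\mathrm{e}^{\pm\psi}D(Q)=D(Q)$. I would handle this by verifying the resolvent identity $\langle (H_\psi-z)u,v\rangle=(Q_\psi-z)(u,v)$ for $u$ in the range of the candidate inverse and $v\in C_c(X)$, and then using density of $C_c(X)$ in $D(Q)$. The pointwise expansion in the first step also needs care because $o$ is complex and $b$ need not be locally finite, but it is routine.
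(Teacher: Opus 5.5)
Your proposal is correct and follows the paper's route. You bound the twisted form $Q(\mathrm{e}^{-\psi}\cdot,\mathrm{e}^{\psi}\cdot)-Q$ by an $O(\varepsilon)$ multiple of the form norm, using the intrinsic metric, the jump size and the form bound on $c_-$; you use Lemma~\ref{Q0} to control the domains; and you identify the resulting operator with $\mathrm{e}^{\psi}H_2\mathrm{e}^{-\psi}$ on $\mathrm{e}^{\psi}D(H_2)$. The only difference is that you carry out the inversion by hand, via the Neumann series for $1+BJR_zJ$ with $J=(H_2+1)^{1/2}$, whereas the paper invokes Kato's stability theorem for sectorial forms \cite[Theorem VI 3.9]{K}; both give the same resolvent bound on $K$, uniform in $\psi$.
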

\begin{proof}
    Since $\psi$ is a bounded Lipschitz function, so are $\mathrm{e}^{\pm\psi}.$
    Thus, with Lemma \ref{Q0}, we can define the quadratic form 
    $Q_\psi$ on $D(Q_\psi) = D(Q)$ as
    \[Q_\psi(f,g) = Q(\mathrm{e}^{-\psi }f,\mathrm{e}^\psi g)-Q(f,g).\]

By doing an analogous calculation as in \cite[Lemma 4.1]{BauerHuaKeller} one finds that there is $C=C(s)>0$ such that,  for $\delta>0$ and $0<\varepsilon<1$ arbitrary, 
\[\vert Q_\psi (f)\vert \leq C\varepsilon^2\left(1+\frac{1}{\delta}\right)\Vert f\Vert^2_2+\delta \left(Q(f)-\sum_ X (c\wedge0)\vert f\vert^2\right).\]
Now, since $c$ is form bounded, we see that for $f\in D(Q)$

\[ Q(f) - \sum_X (c \wedge 0) \vert f\vert^2 \leq C (Q(f)+\Vert f\Vert^2_2)\] for some $C>0.$ Combining this with the estimate above yields 

\[\vert Q_\psi(f)\vert\leq C\varepsilon^2\left(1+\frac{1}{\delta}\right)\Vert f\Vert^2_2+C\delta\Vert f\Vert^2_2+C\delta Q(f).\] We conclude from \cite[Theorem VI 3.9]{K} that $Q_\psi+Q$ is closed and sectorial and, therefore, is represented by a sectorial operator $ H_\psi.$ Moreover, by the same reference, we find that for $\delta$ and $\varepsilon$ small enough $K\subseteq\rho( H_\psi)$ and 
\[\Vert ( H_\psi-z)^{-1}\Vert_{2,2}< C\] for some constant $C = C(\varepsilon,\delta, K).$ The fact that $C$ may only depend on $K$ but not on $z$ is due to compactness of $K.$

A direct calculation shows that $ H_\psi = \mathrm{e}^\psi H_2\mathrm{e}^{-\psi}$ on $D( H_\psi) = \mathrm{e}^\psi D( H_2).$
Therefore,\[ \Vert\mathrm{e}^{\psi}( H_2-z)^{-1}\mathrm{e}^{-\psi}\Vert_{2,2}=\Vert (\mathrm{e}^\psi H_2\mathrm{e}^{-\psi}-z)^{-1}\Vert_{2,2}=\Vert  ( H_\psi-z)^{-1}\Vert_{2,2} <C.\]
This finishes the proof.
\end{proof}

Under the assumption of subexponential volume growth, we already established in Example~\ref{exampleexpvolgrowth} that the semigroups on $\ell^p$, $1\leq p\leq \infty$, are exponentially bounded. Therefore, the resolvents $( H_p-\lambda)^{-1}$ can be expressed via the Laplace transform as \[( H_p-\lambda)^{-1} = \int_0^\infty\mathrm{e}^{\lambda t}S_p(t)\mathrm{d}t\] for all $\mathrm{Re}\,\lambda<0$, (where the formula holds strongly for $1\leq p<\infty$ and in a weak-$\ast$-sense for $p=\infty$). Since the semigroups $S_p$ are consistent for $1\leq p< \infty,$ we conclude the same for the resolvents on $\lbrace \lambda\in\mathbb{C}\mid\mathrm{Re}\,\lambda<0\rbrace$ due to density arguments.
In particular, the kernel $g_\alpha$ of the resolvent $G_\alpha = ( H_p-\alpha)^{-1}$, $\alpha<0$, is independent of $1\leq p<\infty.$  

The following bounds for $g_\alpha$ are a variation of \cite[Lemma 4.2]{BauerHuaKeller} on graphs and provide the necessary estimates for the proof of Theorem \ref{lpIndependence}.

\begin{lemma}\label{Galpha} Assume the potential $c$ is form bounded and the graph $b$ has subexponential volume growth. For every $\varepsilon>0$, there are $\alpha<0<C<\infty$ such that, for all bounded Lipschitz functions $\psi$ with Lipschitz constant $\varepsilon$,
    \begin{itemize}
        \item [(a)] $\vert g_\alpha(x,y)\vert\leq C(m(x)m(y))^{-1\slash 2}\mathrm{e}^{-\varepsilon d(x,y)}$ for all $x,y\in X.$ 
        \item[(b)] $\Vert \mathrm{e}^\psi G_\alpha\mathrm{e}^{-\psi}m^{1\slash 2}\Vert_{1,2}\leq C.$  
        \item[(c)]  $\Vert m^{1\slash 2}\mathrm{e}^\psi G_\alpha\mathrm{e}^{-\psi}\Vert_{2,\infty}\leq C.$
    \end{itemize}
\end{lemma}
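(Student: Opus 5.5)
The plan is to use the Laplace transform representation $G_\alpha=\int_0^\infty \mathrm{e}^{\alpha t}S_2(t)\,\mathrm{d}t$ together with the heat kernel bound of Corollary~\ref{DGGCorollary}. Part (a) comes first, and (b) and (c) are then derived from (a) by Schur-type estimates using Lemma~\ref{LemmaVolGrowth}.

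\textbf{Step 1 (pointwise kernel bound).} Fix $\varepsilon>0$ and set $\beta=2\varepsilon$. Corollary~\ref{DGGCorollary} gives, for all $t\ge0$ and $x,y\in X$,
\[
|p_t(x,y)|\le (m(x)m(y))^{-1/2}\mathrm{e}^{-\beta d(x,y)+(C_\beta-\lambda_2)t},
\]
with $C_\beta=s^{-2}(\cosh(\beta s)-1)$. Since $\lambda_2\ge0$, we may choose $\alpha<-C_\beta$. Integrating the kernel of $\mathrm{e}^{\alpha t}S_2(t)$ over $t\in(0,\infty)$, the resolvent kernel satisfies
\[
|g_\alpha(x,y)|\le \frac{1}{|\alpha|-C_\beta}\,(m(x)m(y))^{-1/2}\mathrm{e}^{-2\varepsilon d(x,y)}.
\]
This is stronger than (a). To justify interchanging the time integral with evaluation at $1_x,1_y$, I would use the $\ell^2$ Laplace transform: $\langle G_\alpha 1_y,1_x\rangle=\int_0^\infty \mathrm{e}^{\alpha t}\langle S_2(t)1_y,1_x\rangle\,\mathrm{d}t$. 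I would also note that $g_\alpha$ is the same kernel for every $p$, by the consistency discussed just before the lemma.

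\textbf{Step 2 (part (b)).} The operator $T=\mathrm{e}^\psi G_\alpha \mathrm{e}^{-\psi}m^{1/2}$ has kernel $\mathrm{e}^{\psi(x)-\psi(y)}g_\alpha(x,y)m(y)^{1/2}$ with respect to $m(y)$. Since $\psi$ is Lipschitz with constant $\varepsilon$, we have $\mathrm{e}^{\psi(x)-\psi(y)}\le \mathrm{e}^{\varepsilon d(x,y)}$, and the strengthened bound from Step~1 leaves the factor $\mathrm{e}^{-\varepsilon d(x,y)}$. The $\ell^1\to\ell^2$ norm of an integral operator is bounded by the supremum over $y$ of the $\ell^2$-norm in $x$ of the kernel times $m(y)$. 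So it suffices to show
\[
\sup_y \sum_x m(x)\,|g_\alpha(x,y)|^2 m(y)^{3}\mathrm{e}^{2\varepsilon d}\cdots
\]
is finite. The powers of $m$ need care, and this is where I expect the main obstacle. Tracking them through, the quantity reduces to $\sup_y\sum_x \mathrm{e}^{-2\varepsilon d(x,y)}m(y)^{a}$ for some power $a$. If the $m$-powers cancel correctly, this is a bare counting sum $\sum_x \mathrm{e}^{-2\varepsilon d(x,y)}$, which is bounded uniformly in $y$ by Lemma~\ref{LemmaCombVolGrowth} under subexponential growth (balls have at most $C_\delta \mathrm{e}^{2\delta r}$ points). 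Otherwise I would use $m(x)\le C\mathrm{e}^{\delta d(x,y)}m(y)$, which follows from subexponential growth as in the proof of Lemma~\ref{LemmaCombVolGrowth}, to trade mismatched factors $m(x)/m(y)$ for an arbitrarily small exponential loss $\mathrm{e}^{\delta d}$. This is absorbed by taking $\beta$ larger in Step~1, e.g.\ $\beta=3\varepsilon$.

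\textbf{Step 3 (part (c)).} By symmetry, $g_\alpha(x,y)=\overline{g_\alpha(y,x)}$ for real $\alpha$ by self-adjointness. The operator $m^{1/2}\mathrm{e}^\psi G_\alpha\mathrm{e}^{-\psi}$ from $\ell^2$ to $\ell^\infty$ has norm $\sup_x\|k(x,\cdot)\|_{\ell^2(m)}$, where $k$ is its kernel. This is the same type of sum as in Step~2 with the roles of $x$ and $y$ exchanged (equivalently, (c) is the adjoint of (b) with $\psi$ replaced by $-\psi$). Hence the same estimate via the subexponential counting bound and the comparison $m(x)\le C\mathrm{e}^{\delta d(x,y)}m(y)$ gives a constant $C$ independent of $\psi$. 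Uniformity in $\psi$ holds throughout because only the Lipschitz constant entered.
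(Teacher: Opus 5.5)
Your route is the paper's: (a) comes from Corollary~\ref{DGGCorollary} and the Laplace transform, and (b), (c) come from the column/row-norm computation of \cite[Lemma~4.2]{BauerHuaKeller}, which the paper invokes verbatim. Your choice of decay rate $\beta=2\varepsilon$ in Step~1, i.e.\ $\alpha<-C_{2\varepsilon}$, is in fact needed for that deduction. Suppose the rate equals the Lipschitz constant $\varepsilon$, as in the paper's literal proof of (a). Then (a) alone only bounds the relevant sum by $\sum_x \mathrm{e}^{2(\psi(x)-\psi(y))-2\varepsilon d(x,y)}$. For $\psi=\varepsilon(d(\cdot,y)\wedge R)$ this is at least $\#B_R(y)$, so the bound is not uniform in $\psi$. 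Your choice is the right way to read the paper's appeal to \cite[Lemma~4.2]{BauerHuaKeller}.

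The one thing to fix is the bookkeeping in Step~2, which is wrong as displayed. Take $k(x,y)=\mathrm{e}^{\psi(x)-\psi(y)}g_\alpha(x,y)m(y)^{1/2}$ as the kernel with respect to $m$, i.e.\ $Tf(x)=\sum_y m(y)k(x,y)f(y)$. Testing on $f=1_y/m(y)$, which has $\|f\|_1=1$, and using Minkowski's inequality shows $\|T\|_{1,2}=\sup_y\|k(\cdot,y)\|_{\ell^2(m)}$, with no additional factor $m(y)$. Hence
\[
\|T\|_{1,2}^2=\sup_y\sum_x m(x)m(y)\,\mathrm{e}^{2(\psi(x)-\psi(y))}|g_\alpha(x,y)|^2\le C^2\sup_y\sum_x \mathrm{e}^{-2\varepsilon d(x,y)} .
\]
The powers of $m$ cancel exactly. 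What remains is a pure counting sum, finite uniformly in $y$ by Lemma~\ref{LemmaCombVolGrowth} applied with any rate $\delta<\varepsilon$. Your expression with $m(y)^3$ would leave an unpaired factor $m(y)^2$. The fallback $m(x)\le C\mathrm{e}^{\delta d(x,y)}m(y)$ could not absorb that, since it only controls ratios $m(x)/m(y)$; fortunately the fallback is not needed. Step~3 is then correct as you describe it: either the same computation with $\sup_x$, or (b) applied to $-\psi$ together with duality.
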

\begin{proof}
    Assertion (a) follows from Corollary \ref{DGGCorollary} and the Laplace transform. More precisely, choosing $\alpha<-C_\varepsilon$ for $C_\varepsilon$ as in Corollary~\ref{DGGCorollary} (for $\varepsilon=\beta$) we get
    \[\vert g_\alpha(x,y)\vert = \Big\vert\int_0^\infty\mathrm{e}^{\alpha t}p_t(x,y)\mathrm{d}t\Big\vert\leq(m(x)m(y))^{-1\slash 2}\mathrm{e}^{-\varepsilon d(x,y)}\int_0^\infty\mathrm{e}^{(\alpha+C_\varepsilon)t}\mathrm{d}t.\] 
    Assertions (b) and (c) follow verbatim as in \cite[Lemma 4.2]{BauerHuaKeller}.
\end{proof}

With the estimates from Lemma \ref{Galpha} and Lemma \ref{resolventbound}, we are now able to show the following crucial boundedness result for the square of the resolvent on $\ell^p.$ The proof follows again the graph setting \cite[Lemma~4.3]{BauerHuaKeller}.

\begin{lemma}\label{lp2bounded}
   Assume the potential $c$ is form bounded and the graph $b$ has subexponential volume growth. Then $( H_2-z)^{-2}$ extends to a bounded operator on $\ell^p$, $1\leq p\leq\infty$, for all $z\in\rho( H_2).$ Moreover, for every compact $K\subseteq\rho( H_2)$, there exists $C>0$ such that for all $z\in K$
    \[\Vert ( H_2-z)^{-2}\Vert_{p,p}\leq C.\]
\end{lemma}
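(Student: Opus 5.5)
The plan is to follow \cite[Lemma~4.3]{BauerHuaKeller}: factor the squared resolvent through a fixed resolvent $G_\alpha$ and weights $\mathrm{e}^{\pm\psi}$. We reduce to $p=1$, since $\ell^\infty$ follows by duality ($(H_2-z)^{-2}$ has adjoint $(H_2-\bar z)^{-2}$, and $\bar K\subseteq\rho(H_2)$ is compact) and $1<p<\infty$ by Riesz--Thorin interpolation with the trivial $\ell^2$ bound $\sup_{z\in K}\mathrm{dist}(z,\sigma(H_2))^{-2}$. So it suffices to show $\Vert (H_2-z)^{-2}f\Vert_1\le C\Vert f\Vert_1$ for $f\in C_c(X)$, uniformly in $z\in K$, and it is enough to test $f=1_y/m(y)$ for each $y\in X$.

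Fix $\varepsilon>0$ small enough for Lemma~\ref{resolventbound} to apply on $K$, and take $\alpha<0$ from Lemma~\ref{Galpha} with this $\varepsilon$. The resolvent identity gives
\[(H_2-z)^{-1}=G_\alpha+(z-\alpha)(H_2-z)^{-1}G_\alpha ,\]
and, applying it on both sides,
\[(H_2-z)^{-2}=G_\alpha\bigl(1+(z-\alpha)(H_2-z)^{-1}\bigr)^2 G_\alpha\]
(the factors commute). The terms $G_\alpha^2$ and $G_\alpha(H_2-z)^{-1}G_\alpha$ are handled the same way, so consider the generic term $G_\alpha T G_\alpha$ with $T\in\{1,(H_2-z)^{-1},(H_2-z)^{-2}\}$. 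Conjugate by a weight $\psi$ centred at $y$: set $\psi_{y,R}=\min(\varepsilon d(\cdot,y),R)$, which is bounded with Lipschitz constant $\varepsilon$. Then write
\[\mathrm{e}^{\psi}G_\alpha T G_\alpha \tfrac{1_y}{m(y)}
=\bigl(\mathrm{e}^{\psi}G_\alpha \mathrm{e}^{-\psi}\bigr)\bigl(\mathrm{e}^{\psi}T\mathrm{e}^{-\psi}\bigr)\bigl(\mathrm{e}^{\psi}G_\alpha\mathrm{e}^{-\psi}\bigr)\mathrm{e}^{\psi}\tfrac{1_y}{m(y)},\]
and note that $\mathrm{e}^{\psi(y)}=1$. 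The factor $\mathrm{e}^{\psi}G_\alpha\mathrm{e}^{-\psi}m^{1/2}$ applied to $m^{-1/2}1_y/m(y)$, whose $\ell^1$ norm is $m(y)^{-1/2}$, gives an $\ell^2$ vector of norm $\le Cm(y)^{-1/2}$ by Lemma~\ref{Galpha}(b). The middle factor is bounded on $\ell^2$ uniformly in $z\in K$ by Lemma~\ref{resolventbound} (trivially when $T=1$). The remaining conjugated $G_\alpha$ is bounded on $\ell^2$, since $\mathrm{e}^\psi G_\alpha\mathrm{e}^{-\psi}$ is the conjugated resolvent at the real point $\alpha\in\rho(H_2)$, also covered by Lemma~\ref{resolventbound}. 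Alternatively, one can use Lemma~\ref{Galpha}(c) to obtain pointwise control $|\cdot|(x)\le Cm(x)^{-1/2}$ times the $\ell^2$ norm.

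To pass to $\ell^1$, use Cauchy--Schwarz:
\[\sum_x m(x)|u(x)|\le \Bigl(\sum_x m(x)\mathrm{e}^{-2\psi(x)}\Bigr)^{1/2}\Vert \mathrm{e}^{\psi}u\Vert_2 .\]
Here the weighted volume $\sum_x m(x)\mathrm{e}^{-2\varepsilon d(x,y)}$ is not uniformly comparable to $m(y)$ merely from volume growth, so the plan is to keep track of the $m$ weights carefully. Using subexponential growth, $\sum_x m(x)\mathrm{e}^{-2\varepsilon d(x,y)}\le C_\varepsilon m(y)$ by summing over annuli: $m(B_{r+1}(y))\le C\mathrm{e}^{\varepsilon r}m(y)$. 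This cancels the $m(y)^{-1/2}$ from Lemma~\ref{Galpha}(b) and yields a bound $C$ independent of $y$. Let $R\to\infty$ with monotone convergence to remove the truncation in $\psi$. Summing over $y$ with weights $m(y)|f(y)|$ then gives the $\ell^1$ bound, and density defines the extension.

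The main obstacle is this last bookkeeping step: arranging the factors so that exactly one $\ell^1\to\ell^2$ step via Lemma~\ref{Galpha}(b) and one $\ell^2\to\ell^1$ step via Cauchy--Schwarz with the exponential weight occur. This is also why the square of the resolvent is needed rather than the resolvent itself. One further point to check is the uniformity in $z$, which follows since all constants depend only on $K$ through Lemma~\ref{resolventbound} and $\sup_K|z-\alpha|$.
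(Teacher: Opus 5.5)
Your proposal is correct. It starts from the paper's factorization $(H_2-z)^{-2}=G_\alpha(I+(z-\alpha)G_z)^2G_\alpha$ but closes the argument differently.

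\emph{The two routes.} The paper bounds $m^{1/2}\mathrm{e}^{\psi}G_z^2\mathrm{e}^{-\psi}m^{1/2}$ from $\ell^1$ to $\ell^\infty$, using Lemma~\ref{Galpha}(b),(c) and Lemma~\ref{resolventbound}. It then reads off $|g^{(2)}_z(x,y)|\le C(m(x)m(y))^{-1/2}\mathrm{e}^{-\varepsilon d(x,y)}$ by choosing $\psi=\varepsilon(d(\cdot,y)\wedge d(x,y))$, and sums via Lemma~\ref{LemmaVolGrowth}. You instead stop at a weighted $\ell^2$ bound on the column $G_z^2(1_y/m(y))$. You then apply Cauchy--Schwarz against $\sum_x m(x)\mathrm{e}^{-2\varepsilon d(x,y)}\le Cm(y)$, which follows directly from $m(B_r(y))\le C_\varepsilon\mathrm{e}^{\varepsilon r}m(y)$.

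\emph{Two points of order.} First, take $R\to\infty$ in the weighted $\ell^2$ bound before applying Cauchy--Schwarz; with truncated $\psi_{y,R}$ the weight sum $\sum_x m(x)\mathrm{e}^{-2\psi_{y,R}(x)}$ may be infinite. Second, Lemma~\ref{resolventbound} applied to $K$ does not cover $\alpha$. So bounding $\mathrm{e}^{\psi}G_\alpha\mathrm{e}^{-\psi}$ on $\ell^2$ requires shrinking $\varepsilon$ once more for $\{\alpha\}$ after $\alpha$ is chosen. This is harmless, since the conclusions of Lemmas~\ref{resolventbound} and~\ref{Galpha} for $K$ and the chosen $\alpha$ persist for smaller Lipschitz constants. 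Alternatively, use Lemma~\ref{Galpha}(c) as the paper does.

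\emph{What your route buys.} Your closing step gives more than you say. Since $\Vert 1_y/m(y)\Vert_2=m(y)^{-1/2}$ and $\psi_{y,R}(y)=0$, Lemma~\ref{resolventbound} alone gives $\Vert\mathrm{e}^{\psi_{y,R}}(H_2-z)^{-k}(1_y/m(y))\Vert_2\le C^k m(y)^{-1/2}$. The same argument therefore bounds $(H_2-z)^{-k}$ on $\ell^1$ uniformly on $K$ for $k=1$ as well, with no $G_\alpha$ and no Lemma~\ref{Galpha}.

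Your remark that this is why the square is needed is therefore not accurate. On a discrete space, $m^{1/2}$ is a contraction $\ell^1\to\ell^2$ and $\ell^2\to\ell^\infty$. The two-sided smoothing by $G_\alpha$, and hence the square, is inherited from the continuum arguments of Sturm and Hempel--Voigt, where point masses are not in $L^2$. The paper's route, in exchange, yields the pointwise off-diagonal decay of $g^{(2)}_z$.
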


\begin{proof}
     Let $g_z^{(2)}$ be the kernel of $G_z^2 = ( H_2-z)^{-2}.$ 
     For given compact $K\subseteq\rho( H_2)$, we choose $\varepsilon>0$ 
     according to Lemma~\ref{resolventbound} and, then $\alpha<0$ according to Lemma~\ref{Galpha}. By using the resolvent identity twice, we obtain for $z\in K$ that
    \[G_z^2 = (G_\alpha+(z-\alpha)G_\alpha G_z)(G_\alpha+(z-\alpha)G_zG_\alpha) = G_\alpha(I+(z-\alpha)G_z)^2G_\alpha.\]
    Let $\psi$ be a bounded Lipschitz function with Lipschitz constant $\varepsilon.$ Then
    \[m^{1\slash 2}\mathrm{e}^\psi G_z^2\mathrm{e}^{-\psi}m^{1\slash 2} = (m^{1\slash 2}\mathrm{e}^\psi G_\alpha\mathrm{e}^{-\psi})(I+\mathrm{e}^{\psi\slash 2} G_z\mathrm{e}^{-\psi\slash 2})^2(\mathrm{e}^\psi G_\alpha\mathrm{e}^{-\psi}m^{1\slash 2})\] and, therefore,
    \begin{multline*}
       \Vert m^{1\slash 2}\mathrm{e}^\psi G_z^2\mathrm{e}^{-\psi}m^{1\slash 2}\Vert_{1,\infty}
       \\\leq \Vert m^{1\slash 2}\mathrm{e}^\psi G_\alpha\mathrm{e}^{-\psi}\Vert_{2,\infty}\Vert (I+\mathrm{e}^{\psi\slash 2} G_z\mathrm{e}^{-\psi\slash 2})^2\Vert_{2,2}\Vert\mathrm{e}^\psi G_\alpha\mathrm{e}^{-\psi}m^{1\slash 2}\Vert_{1,2}\le C<\infty,
    \end{multline*}
    by the calculation above and the estimates in Lemma \ref{resolventbound} and Lemma \ref{Galpha} with $C=C(K,\varepsilon,\alpha)$. Put $U = m^{1\slash 2}\mathrm{e}^\psi G_z^2\mathrm{e}^{-\psi}m^{1\slash 2}.$ Then, we have shown that $U$ is a bounded operator from $\ell^1$ to $\ell^\infty$ and as such admits a kernel \[k_U(x,y) = (m(x)m(y))^{1\slash 2}\mathrm{e}^{\psi(x)-\psi(y)}g_z^{(2)}(x,y).\] By the  Hölder inequality, cf. \cite[Lemma 3.5 (b)]{BauerHuaKeller}, we conclude that $ \|k\|_\infty=\|U\|_{1,\infty}\leq C $, so that,    
    for all $x,y\in X$
    \[\vert g_z^{(2)}(x,y)\vert\leq C(m(x)m(y))^{-1\slash 2}\mathrm{e}^{\psi(y)-\psi(x)}.\]
    If we  fix $x,y\in X$ and choose $\psi: v\mapsto \varepsilon (d(v,y)\wedge d(x,y))$, then the estimate becomes
    \[\vert g^{(2)}_z(x,y)\vert\leq C(m(x)m(y))^{-1\slash 2}\mathrm{e}^{-\varepsilon d(x,y)}.\] Let $f\in\ell^1.$ Then, we get
\begin{align*}
    \Vert G^2_zf\Vert_1&\leq\sum_{x\in X }m(x)\vert f(x)\vert\sum_{y\in X }m(y)\vert g^{(2)}_z(x,y)\vert\\&\leq C\sum_{x\in X }m(x)\vert f(x)\vert\sum_{y\in X }\sqrt{\frac{m(y)}{m(x)}}\mathrm{e}^{-\varepsilon d(x,y)}\leq C'\Vert f\Vert_1.
\end{align*}
The last inequality is due to Lemma \ref{LemmaVolGrowth}. This proves the statement for $p = 1.$ The case $p = 2$ is clear. We obtain the general statement for $p\in [1,\infty]$ via interpolation and duality arguments.
\end{proof}
 We are finally able to prove Theorem \ref{lpIndependence} following \cite{BauerHuaKeller}.
 \begin{proof}[Proof of Theorem \ref{lpIndependence}]
    
    First, we show the inclusion $\sigma( H_p)\subseteq \sigma( H_2).$ For $z\in\rho( H_2)$, let $g_z^{(2)}$ be the kernel of $( H_2-z)^{-2}.$ Then for any fixed $x,y\in X$ the map $\rho( H_2)\rightarrow\mathbb{C},z\mapsto g^{(2)}_z(x,y)$ is analytic. From Lemma \ref{lp2bounded} we conclude that $( H_2-z)^{-2}$ is analytic on $\rho( H_2)$ as a family of $\ell^p$-bounded operators. Moreover, $( H_p-z)^{-2}$ is analytic as a family of $\ell^p$-bounded operators on $\rho( H_p)$ (see \cite[Lemma 3.2]{HempelVoigt1986}). However, both resolvents are consistent for $z$ with negative real part. By unique continuation of analytic functions, we conclude that the connected set $\rho( H_2)$ is contained in the maximal domain of analyticity of $( H_p-z)^{-2},$ which is $\rho( H_p).$\\
    Now, let $1\leq p< 2$ and $q$ be the respective H\"older conjugate. From the inclusion $\sigma(H_p)\subseteq\sigma(H_2)\subseteq [0,\infty)$ we conclude that $\rho(H_p) = \rho(H_q).$ Moreover, it follows from the Laplace transform (which holds for $q= \infty$ in the weak-$\ast$-sense) that $(H_p-z)^{-1} = (H_q-z)^{-1}$ on $\ell^p\cap\ell^q$ for all $z$ with $\mathrm{Re}z<0.$ From \cite[Corollary~1.4]{HempelVoigt1987} it follows that $(H_p-z)^{-1} = (H_q-z)^{-1}$ on $\ell^p\cap\ell^q$ for all $z\in\rho(H_p).$ Thus, by interpolation, $(H_p-z)^{-1}$ extends to a bounded operator on $\ell^2.$ By \cite[Theorem~1.2]{HempelVoigt1987} the map $\rho(H_p)\to\mathcal{B}(\ell^2),z\mapsto(H_p-z)^{-1}$ is analytic. On the other hand, the map $z\mapsto (H_2-z)^{-1}$ is analytic on $\rho(H_2),$ which is also its maximal domain of analyticity. Now, observe that $(H_p-z)^{-1} = (H_2-z)^{-1}$ for all $z$ with $\mathrm{Re}z<0,$ which follows once again from the Laplace transform. Thus, we conclude from unique continuation of analytic functions that $\rho(H_p)\subseteq \rho(H_2)$ which finishes the proof.
\end{proof}

  \eat{ From the inclusion $\sigma( H_p)\subseteq\sigma( H_2)\subseteq[0,\infty)$ we conclude that $\rho( H_p)$ and $\rho( H_q)$ are connected for any $1\leq p,q\leq\infty$. Moreover, $( H_p-z)^{-1}$ and $( H_q-z)^{-1}$ are consistent for all $z,\mathrm{Re}z<0$ as a consequence of the Laplace transform. By \cite[Corollary 1.4]{HempelVoigt1987} they are consistent on $\rho( H_p)\cap\rho( H_q).$ By means of interpolation for $1\leq p<\infty$ and $q$ its dual exponent, we get that $( H_p-z)^{-1}$ extends consistently to an $\ell^2$-bounded operator. This gives us a family of $\ell^2$-bounded operators that is analytic on $\rho( H_p)$ and consistent with $( H_2-z)^{-1}$ for all $z$, $\mathrm{Re}z<0$ and, therefore, on the connected set $\rho( H_p)\cap\rho( H_2).$ 
    As before, since $\rho( H_p)$ is connected,  we conclude from unique continuation that $\rho( H_p)\subseteq\rho( H_2).$}

\section{Weighted Simplicial Complexes}\label{s:weighted}

In this section, we introduce the fundamental concepts and notation for weighted simplicial complexes and their associated Laplacians, following the framework developed in \cite{BK}.

\subsection{Laplacians on Weighted Simplicial Complexes}
 
Let $V$ be a countable set and $\hat \Sigma$ be a subset of the power set $\mathcal{P}(V)$ consisting of finite subsets of $V.$ We call $\hat \Sigma$ a \textit{simplicial complex} if $\sigma\in\hat \Sigma$ and $\tau\subseteq \sigma$ implies $\tau\in\hat \Sigma.$ The set of elements of cardinality $k+1$ (called $k$-\emph{simplices}) is denoted by $\hat \Sigma_k$. \\
Let $m:\hat{\Sigma}\rightarrow(0,\infty)$ be  a function. We define $ \Sigma = \hat \Sigma$ if $\sum_{v\in  \hat\Sigma_{0}}m(v)<\infty$ and $ \Sigma = \hat \Sigma\backslash\lbrace\varnothing\rbrace$ otherwise and call $( \Sigma,m)$ a \textit{weighted simplicial complex}. 
We assume throughout that the weight $m$ is \textit{locally summable}, i.e., for all $\tau\in \Sigma$
\[\gamma(\tau)= \sum_{\sigma\succ\tau}m(\sigma)<\infty,\]
where for $\tau,\sigma\in \Sigma$ with $\tau\subset\sigma$ and $|\sigma\backslash\tau| = 1$, we write $\tau\prec\sigma$ and say $\tau$ is a \textit{face} of $\sigma$ or $\sigma$ is a \textit{coface} of $\tau$. The \emph{dimension} of a simplex is $\dim(\tau)=|\tau|-1$ and the dimension of the complex is  $\dim(\Sigma)=\sup_{\tau\in \Sigma}\dim(\tau)$.\\
We  define the function space
\begin{align*}
\F &= \lbrace\omega\in C(\Sigma)\mid\sum_{\sigma\succ \tau}m(\sigma)|\omega(\sigma)|<\infty~\text{for all}~\tau\in \Sigma\rbrace.
\end{align*}
 A linear map $\delta:D(\delta)=C(\Sigma)\rightarrow C(\Sigma)$ is called a \textit{coboundary operator} if, for all $\tau,\sigma\in \Sigma$, we have $\delta 1_\tau(\sigma)\in\lbrace -1,0,1\rbrace$, $\delta 1_\tau(\sigma)\neq 0$ iff $\tau\prec\sigma$, and \[\delta\delta = 0.\]
On $D(\partial)=\F$, we define the \textit{boundary operator} $\partial:\F\rightarrow C(\Sigma)$ as
\[\partial\omega(\rho) = \frac{1}{m(\rho)}\sum_{\tau\succ \rho}m(\tau)\omega(\tau)\theta(\rho,\tau),\]
where $\theta(\tau,\sigma)=\delta1_\tau(\sigma)$. 
Here we define the coboundary operator on functions rather than alternating forms, as traditionally done in the literature. However, $\delta$ stands in a one-to-one correspondence to the respective coboundary operator acting on alternating forms on the oriented simplicial complex. This was shown in \cite[Subsection 2.2]{BK}. The operator $\delta$ acting on functions is for our purposes more convenient and allows for a direct interpretation of the Laplacian as a signed Schr\"odinger operator, \cite[Theorem 3.12]{BK}. The ``orientation'' is encoded in the sign $\theta(\tau,\sigma).$

We consider the Banach spaces $\ell^p=\ell^p( \Sigma,m)$, $1\leq p\leq \infty$, defined as above with norms $\|\cdot\|_p$, where again $\ell^2$ is a Hilbert space  with inner product $\langle\omega,\eta\rangle = \sum_{ \Sigma}m\omega\overline{\eta}$.

We define the quadratic forms
\begin{align*}
\Q^+(\omega) &= \|\delta\omega\|^{2}_2, & D(\Q^+) &= \lbrace \omega\in C(\Sigma)\mid \|\delta\omega\|^{2}_2<\infty\rbrace,\\
\Q^-(\omega) &= \|\partial\omega\|^{2}_2, & D(\Q^-) &= \lbrace \omega\in \F\mid\|\partial\omega\|^{2}_2<\infty\rbrace,\\
\Q^H(\omega) &= \|\delta\omega+\partial\omega\|^{2}, & D(\Q^H) &= \lbrace \omega\in \F\mid\|\delta\omega+\partial\omega\|^{2}_2<\infty\rbrace.
\end{align*}
By \cite[Section~3.1]{BK}, these quadratic forms are densely defined, non-negative and closed when restricted to the Hilbert space $\ell^2.$
For $\circ\in\{\pm,H\}$, the corresponding Dirichlet Laplacians are defined as the self-adjoint operators associated to the closed quadratic form
\begin{align*}
Q^\circ  = \Q^\circ\mid_{D(Q^\circ)}, \qquad D(Q^{\circ} ) = \overline{C_c(\Sigma)}^{\Vert\cdot\Vert_{\Q^{\circ}}},\qquad\Vert\cdot\Vert^2_{\mathcal{Q}^{\circ}} = \mathcal{Q}^{\circ}+\Vert\cdot\Vert^2_2.
\end{align*}
We denote by $\Delta^\circ=\Delta^\circ_2$ the positive self-adjoint operator arising from the closed quadratic form $Q^{\circ}$. 
In \cite[Theorem 3.4]{BK} it is shown that these operators are restrictions of the formal Laplacians
\begin{align*}
\Delta^+ \omega=  \partial\delta \omega, \qquad \Delta^-  \omega= \delta\partial \omega, \qquad \Delta^H  \omega= (\delta\partial+\partial\delta) \omega,
\end{align*}
for  $ \omega$ in the respective domains. 

A key perspective developed in \cite{BK} is the representation of these Laplacians as signed Schr\"{o}dinger operators, i.e.,
there exist explicit edge weights $b^\circ$, magnetic potentials $o^\circ$ taking values in $\{\pm1\}$, and potentials $c^\circ$ such that the Laplacians can be expressed as Schr\"{o}dinger operators 
 \[\Delta^\circ = \mathcal{H}^\circ\] on appropriate domains \cite[Theorem~3.19]{BK}. This allows us to leverage techniques from the theory of Schr\"{o}dinger operators developed above in our analysis of these Laplacians. The term $c^H/m,$ given by 
\[\frac{c^H(\tau)}{m(\tau)} =  \sum_{\rho\prec\tau}\frac{m(\tau)}{m(\rho)}+\sum_{\sigma\succ\tau}\frac{m(\sigma)}{m(\tau)}-\sum_{\rho\prec\tau}\sum_{\tau\neq\tau'\succ\rho}\Big\vert\frac{m(\tau')}{m(\rho)}-\sum_{\sigma\succ\tau,\tau'}\frac{m(\sigma)}{m(\tau)}\Big\vert,\]
is known in the literature as \textit{Forman curvature} \cite{F,JM}. Hence, we say that the weighted simplicial complex has \textit{form bounded curvature} if there exists $M,C>0$ such that for all $\omega\in C_c( \Sigma)$
\[\|\sqrt{c^H_-\slash m}\omega\|^2_2\leq M\mathcal{Q}^H(\omega)+C\Vert\omega\Vert^2_2.\]
We fix an intrinsic metric $d^\circ$ for $b^\circ$ over $( \Sigma,m)$ with finite jump size (see \cite[Section~4 and Section~6]{BK} for various possible choices) for the rest of this section and define the volume of balls as in Subsection~\ref{sectionVolGrowth}. In the next subsections, we discuss applications of the results developed above to the Laplacians on weighted simplicial complexes.

\subsection{Results for Weighted Simplicial Complexes}
The results from the previous sections for magnetic Schrödinger operators carry over to Laplacians on weighted simplicial complexes via their representation as signed Schr\"{o}dinger operators, \cite[Lemma 3.13]{BK}.

Denote the bottom of the spectrum of $\Delta^\circ_2$ by $\lambda^\circ_2$ and the semigroups generated by $-\Delta^\circ_2$ on $\ell^2( \Sigma,m)$ by $S^\circ_2,$  $\circ\in\{\pm,H\}.$ Furthermore, let $p^\circ$ be the kernel of $S^\circ_2.$
 
First, we have the following Davies-Gaffney-Grigoryan estimate, which answers a question by \cite[Remark 1.1]{HL}. Recall the function $\zeta$ from Section~\ref{s:magneticschrodinger} defined as $\zeta(r) =  r  \mathrm{arcsinh}\left({ r} \right)-\sqrt{1+ r^2}+1,$ $r\geq 0.$ The following theorem is a direct consequence of Theorem~\ref{dgg}.

\begin{theorem}[Davies-Gaffney-Grigoryan]\label{dggSC}
    For all $\omega,\eta\in\ell^2$, $A=\mathrm{supp}\ \omega$, $B= \mathrm{supp}\ \eta$, one has
    \[\vert\langle\mathrm{e}^{-t \Delta^\circ_2}\omega,\eta\rangle\vert\leq\mathrm{exp}\left(-\lambda_2^\circ t-\frac{t}{s^2}\zeta \left(\frac{s d^\circ(A,B)}{t}\right)\right)\Vert \omega\Vert_2\Vert \eta\Vert_2.\]
    In particular, the kernel $p_t^\circ$ of the semigroup $S_2^\circ(t) = \mathrm{e}^{-t \Delta^\circ_2}$ satisfies for all $\sigma,\tau\in  \Sigma$,
    \[\vert p_t^\circ(\sigma,\tau)\vert\leq (m(\sigma)m(\tau))^{-1\slash 2}\mathrm{exp}\left(-\lambda_2^\circ t-\frac{t}{s^2}\zeta \left(\frac{s d^\circ(\sigma,\tau)}{t}\right)\right).\]
\end{theorem}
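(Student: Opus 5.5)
The plan is to deduce Theorem~\ref{dggSC} directly from Theorem~\ref{dgg}, applied to the magnetic Schrödinger graph $(b^\circ,o^\circ,c^\circ)$ over $(\Sigma,m)$ that represents $\Delta^\circ$. First, I will check that this triple satisfies the standing assumptions of Section~\ref{s:magneticschrodinger}. By \cite[Theorem~3.19]{BK} and \cite[Lemma~3.13]{BK}, $b^\circ$ is a symmetric graph with zero diagonal, $o^\circ$ takes values in $\{\pm1\}$ and satisfies $o^\circ(x,y)=\overline{o^\circ(y,x)}$, and $c^\circ$ is real. Moreover, the form $Q$ of Section~\ref{s:magneticschrodinger} restricted to $C_c(\Sigma)$ coincides with $\mathcal{Q}^\circ$ on $C_c(\Sigma)$, so $Q\ge 0$.

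Next, I will identify the operators. Since $Q^\circ$ is by definition the closure of $\mathcal{Q}^\circ|_{C_c(\Sigma)}$ in $\ell^2(\Sigma,m)$, and this restriction is closable by \cite[Section~3.1]{BK}, the closure of $Q$ is exactly $Q^\circ$. Consequently the operator $H=H_2$ associated with $Q$ is $\Delta_2^\circ$. In particular $\lambda_2=\lambda_2^\circ$ and $\mathrm{e}^{-tH}=\mathrm{e}^{-t\Delta^\circ_2}$. Together with the fixed intrinsic metric $d^\circ$ for $b^\circ$ with finite jump size $s$, every hypothesis of Theorem~\ref{dgg} is then met, and the first inequality follows verbatim.

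For the kernel bound, I will take $\omega=1_\sigma$ and $\eta=1_\tau$. Then $\|1_\sigma\|_2=m(\sigma)^{1/2}$, and by the definition of the kernel $p_t^\circ(\sigma,\tau)=\langle \mathrm{e}^{-t\Delta_2^\circ}1_\sigma,1_\tau\rangle/(m(\sigma)m(\tau))$. Dividing the first inequality by $m(\sigma)m(\tau)$ gives the stated estimate, exactly as in the remark following Theorem~\ref{dgg}.

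The only point needing care is the identification step: that the Dirichlet form $Q^\circ$ is the closure of the Schrödinger form on $C_c(\Sigma)$, including the conventions on $m$ and on the empty simplex. I expect this to be a bookkeeping matter resolved by citing \cite{BK}.
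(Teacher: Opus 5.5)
Your proposal is correct and follows the same route as the paper. The paper states Theorem~\ref{dggSC} as a direct consequence of Theorem~\ref{dgg}, via the representation of $\Delta^\circ$ as a signed Schr\"odinger operator from \cite{BK}; your identification of $Q^\circ$ with the closure of the Schr\"odinger form on $C_c(\Sigma)$, followed by the choice $\omega=1_\sigma$, $\eta=1_\tau$ for the kernel bound, is exactly what that deduction amounts to.
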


Next, we come to extension of the semigroups  to $\ell^p$ spaces. Whenever $S_2^\circ$ extends to a strongly continuous semigroup $S_p^\circ$ on $\ell^p$ for $1\leq p< \infty$, we denote its generator on $\ell^p$ by $\Delta_p^\circ$ and $\Delta_\infty^\circ = (\Delta_1^\circ)^*$ on $\ell^\infty.$ \\
Before applying any of the above results, let us briefly discuss the case when $\Delta^\circ_2$ is bounded. In \cite[Theorem 4.1]{BK} it is shown that $\Delta^\circ_2$ is bounded if \[\sup_{\Sigma}(\dim +2)\gamma\slash m<\infty\] and that this already characterizes boundedness whenever $\Sigma$ is finite dimensional, i.e., $\dim(\Sigma)=\sup_\Sigma\dim<\infty.$ In that case $S^\circ_2$ is given by the absolutely convergent sum
\[S^\circ_2(t) = \sum_{k = 0}^\infty\frac{(-t)^k{\Delta^\circ_2}^k}{k!}.\]
Clearly, if $\Delta^\circ_2$ extends to a bounded operator on $\ell^p$, $1\leq p\leq\infty,$ then so does $S^\circ_2$, and we obtain an analytic semigroup $S^\circ_p$ on $\ell^p$ that acts consistently as $S^\circ_2.$ Therefore, we only need to show that $\Delta^\circ_2$ extends to a bounded operator on $\ell^p$ and since $\Delta^\circ_2$ acts as a composition of $\delta$ and $\partial$ (\cite[Theorem 3.4]{BK}), we further reduce the problem to mere boundedness of $\delta$ and $\partial$ on $\ell^p.$ It turns out that this holds automatically for all $1\leq p\leq\infty$, without further assumptions aside from boundedness on $\ell^2.$

\begin{proposition}[Boundedness]\label{boundedness} For $k\geq 0$, let $\delta_k,\partial_k$ be the restrictions of $\delta,\partial$ to $C(\Sigma_k).$
\begin{itemize}
    \item [(a)]  $\Vert\delta_k\Vert_{\infty,\infty}\leq k+2$ and   $\Vert\partial_k\Vert_{1,1}\leq k+1.$
    \item [(b)]If $C\coloneqq\sup_\Sigma\gamma\slash m<\infty,$ then \[\Vert\delta\Vert_{1,1}\leq C,\qquad\Vert\partial\Vert_{\infty,\infty}\leq C.\]
    In particular, $\partial_k,\delta_k$ are bounded on $\ell^p$ for all $1\leq p\leq\infty.$ Moreover, if $D\coloneqq\sup_{\Sigma}(\dim+2)\gamma\slash m<\infty,$ then \[\Vert \Delta^\pm_{p}\Vert_{p,p}\leq D\quad\text{and}\quad\Vert\Delta^H_{p}\Vert_{p,p}\leq 2D\] for all $1\leq p\leq \infty.$
    \item [(c)]If $\dim(\Sigma)<\infty,$ then the following are equivalent:
    \begin{itemize}
        \item [(i)]$\sup_\Sigma\gamma\slash m<\infty.$
        \item [(ii)]$\delta$ is bounded for some (all) $1\leq p<\infty.$
        \item [(iii)]$\partial$ is bounded for some (all) $1<p\leq\infty.$
    \end{itemize}
\end{itemize}
\end{proposition}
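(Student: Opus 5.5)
We prove Proposition~\ref{boundedness} by explicit estimates on the kernels of $\delta$ and $\partial$, using $\ell^1$--$\ell^\infty$ duality and Riesz--Thorin interpolation for the remaining exponents.

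\textbf{Part (a).} For $\omega\in C(\Sigma_k)$ and $\sigma\in\Sigma_{k+1}$ we have $\delta\omega(\sigma)=\sum_{\tau\prec\sigma}\theta(\tau,\sigma)\omega(\tau)$. Since $\sigma$ has exactly $k+2$ faces and $|\theta|\le1$, this gives $\Vert\delta_k\omega\Vert_\infty\le(k+2)\Vert\omega\Vert_\infty$. For $\partial_k$ we estimate
\[\Vert\partial_k\omega\Vert_1\le\sum_{\rho}\sum_{\tau\succ\rho}m(\tau)|\omega(\tau)|=\sum_\tau m(\tau)|\omega(\tau)|\,\#\{\rho\prec\tau\},\]
and $\#\{\rho\prec\tau\}=k+1$.

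\textbf{Part (b).} Here the estimates are swapped. First,
\[\Vert\delta\omega\Vert_1\le\sum_\sigma m(\sigma)\sum_{\tau\prec\sigma}|\omega(\tau)|=\sum_\tau|\omega(\tau)|\gamma(\tau)\le C\Vert\omega\Vert_1.\]
Second, $|\partial\omega(\rho)|\le\gamma(\rho)/m(\rho)\,\Vert\omega\Vert_\infty$. Together with (a), each of $\delta_k$ and $\partial_k$ is then bounded on both $\ell^1$ and $\ell^\infty$, so Riesz--Thorin gives boundedness on every $\ell^p$. One should note that $\partial$ is defined on $\F\supseteq\ell^p$ here, because $\sum_{\tau\succ\rho}m(\tau)|\omega(\tau)|\le\gamma(\rho)\Vert\omega\Vert_\infty$ and $\ell^p\subseteq\ell^\infty$ up to weights. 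This needs a small check: one needs $m|\omega|$ summable on cofaces, which follows from H\"older with $\gamma<\infty$.

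For the Laplacians, observe that $\delta\partial$ on $k$-forms passes through $(k-1)$-forms and $\partial\delta$ passes through $(k+1)$-forms. This gives the $\ell^\infty$ bound
\[\Vert\partial\delta\omega\Vert_\infty\le\sup\frac{\gamma}{m}\,(k+2)\Vert\omega\Vert_\infty\le D\Vert\omega\Vert_\infty,\]
and similarly on $\ell^1$ via $(k+1)$ times $\gamma/m$ evaluated at the appropriate level. Doing this pointwise per dimension keeps the constant $D=\sup(\dim+2)\gamma/m$. Interpolating yields $D$ on $\ell^p$, and $\Delta^H$ gets $2D$ by the triangle inequality. Consistency with $\Delta_2^\circ$ follows because the bounded extension agrees with the formal Laplacian on $C_c$. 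The generator of the series semigroup is this bounded operator, so it equals $\Delta^\circ_p$.

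\textbf{Part (c).} The direction (i)$\Rightarrow$(ii),(iii) is (b). For the converses, test on indicators. We have $\Vert\delta1_\tau\Vert_p^p=\gamma(\tau)$ and $\Vert1_\tau\Vert_p^p=m(\tau)$, so boundedness of $\delta$ on $\ell^p$ for some $p<\infty$ gives $\gamma/m\le\Vert\delta\Vert^p$. For $\partial$ on $\ell^p$ with $p>1$, use duality: $\partial$ is the formal adjoint of $\delta$, so $\partial$ bounded on $\ell^p$ implies $\delta$ bounded on $\ell^q$, $q=p/(p-1)<\infty$, which reduces to the previous case. The case $p=\infty$ is covered by testing $\partial$ directly on $\omega=\sum_{\sigma\succ\tau}\theta(\tau,\sigma)1_\sigma$. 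This $\omega$ has $\Vert\omega\Vert_\infty=1$ and $\partial\omega(\tau)=\gamma(\tau)/m(\tau)$. Finite dimension is needed only for the ``all $p$'' claims, since combining the per-$k$ constants requires $\sup_k(k+2)<\infty$.

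\textbf{Main obstacle.} The delicate step is the duality argument relating $\partial$ on $\ell^p$ to $\delta$ on $\ell^q$. One must verify $\langle\partial\omega,\eta\rangle=\langle\omega,\delta\eta\rangle$ for $\eta\in C_c$ and $\omega\in\ell^p$, which is a finite rearrangement.
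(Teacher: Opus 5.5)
Your proof is correct and follows essentially the same route as the paper: triangle-inequality bounds at $p=1,\infty$ with Riesz--Thorin in between, per-degree composition of the $\delta$ and $\partial$ bounds to get $D$ and $2D$ for the Laplacians, identification of $\Delta^\circ_p$ through the exponential series, and for (c) testing $\delta$ on indicators $1_\tau$ combined with the $\delta$--$\partial$ duality (Stokes' theorem). Your separate direct test of $\partial$ at $p=\infty$ is valid but not needed, since the duality argument with $q=1$ already covers that case.
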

\begin{proof}
Let $\omega\in C(\Sigma_k)$ and observe that $\delta$ acts as \[\delta\omega(\sigma) = \sum_{\tau\prec\sigma}\theta(\tau,\sigma)\omega(\tau),~\sigma\in\Sigma.\] This sum has exactly $\dim(\sigma)+1 = k+2$ summands. Now, the operator bounds in (a) and (b) for $\delta$ and $p = 1,\infty$ follow from a simple application of the triangle inequality and for $1<p<\infty$ by the Riesz-Thorin interpolation theorem. Similarly, one obtains the respective operator bounds for $\partial.$ Since $\mathcal{L}^{-}=\delta\partial$, $\mathcal{L}^{+}=\partial\delta$ and $\mathcal{L}^{H}=(\delta+\partial)^2$ are compositions of $\delta$ and $\partial,$ 
they are bounded on $\ell^p$ as well. We obtain $\|\mathcal{L}^\pm\|_{p,p}\leq D$ and $\|\mathcal{L}^H\|_{p,p}\leq 2D$ by combining the bounds for $\delta$ and $\partial$. Hence, they generate  strongly continuous semigroups on $\ell^p$ via the exponential power series which coincides with $S_2^\circ$ on $\ell^2\cap\ell^p$. Thus, their generators $\Delta^\circ_p$ coincide with $\mathcal{L}^\circ$ on $\ell^p$ and are, therefore, bounded with the respective bound.

As for the equivalence in (c), we get ``(i) $\Rightarrow$ (ii)'' from (b). We also note, that by duality and Stokes' theorem \cite[Theorem 2.9]{BK}, $\delta$ is bounded on $\ell^p$ iff $\partial$ is bounded on $\ell^q,$ where $q$ is the H\"older dual of $p.$ This shows the equivalence between (ii) and (iii). 
 Finally, we assume (iii), i.e., $\delta$ is bounded on $\ell^p$ for some $1\leq p<\infty.$ Then there is $D>0$ such that, for every $\tau\in \Sigma$, we have 
 \[Dm(\tau) = D\Vert 1_\tau\Vert_p^p\geq \Vert\delta 1_\tau\Vert_p^p = \sum_{\sigma\succ\tau}m(\sigma) = \gamma(\tau).\]
 This finishes the proof.
\end{proof}

We say a  function
$[0,\infty)\rightarrow \ell^p$, $t\mapsto \omega_t$  solves 
\begin{equation} \tag{HE$^\circ$} \label{HE_p}
     -\Delta^\circ_p \omega_t (x)= \frac{d}{dt}\omega_t(x)     
\end{equation}
for an initial conditions $\omega_0\in\ell^p$  for $\circ\in \{\pm,H\}$, $p\in [1,\infty]$, if $t\mapsto \omega_t(x)$ is  continuous on $[0,\infty)$, differentiable on $(0,\infty)$ and $\omega_t\in D(\Delta_p^\circ)$ for $t>0$ and satisfies 
\eqref{HE_p}  for all $t>0$ and $x\in \Sigma$.
\begin{remark}
    In general it is not clear whether $\Delta^\circ_p$ acts as a composition of $\delta$ and $\partial$ similar to $\Delta^\circ.$ For locally finite simplicial complexes, we may deduce this from Theorem \ref{actionHp} and \cite[Theorem 3.12]{BK} since then $\Delta^\circ_p = \mathcal{H}^\circ = \mathcal{L}^\circ$ on $D(\Delta^\circ_p).$ Here, $\mathcal{L}^{-}=\delta\partial$, $\mathcal{L}^{+}=\partial\delta$ and $\mathcal{L}^{H}=(\delta+\partial)^2$ is the formal Laplacian as in the proof of  Proposition \ref{boundedness}. This applies in particular to combinatorial simplicial complexes, where $m\equiv 1.$
\end{remark}

The following theorem is a direct consequence of Theorem~\ref{formboundedpotential}, Theorem~\ref{boundedpotential}, Theorem~\ref{lpextend}, Proposition~\ref{analyticity} and Proposition~\ref{boundedness}.

\begin{theorem}[Semigroup extension]\label{semigroupextensioncomplexes} Let $\circ\in\{\pm,H\}$.
    \begin{itemize}
        \item[(a)] If the weighted simplicial complex has form bounded curvature with bound $M>0$, then for all  $p\in I =   [2\frac{M+1}{M}-2\frac{\sqrt{M+1}}{M},2\frac{M+1}{M}+2\frac{\sqrt{M+1}}{M}]$ the semigroup $S_2^\circ$ extends consistently to a strongly continuous semigroup $S_p^\circ$ on $\ell^p.$ In the case of bounded curvature, i.e., $M=0,$ this holds for all $p\in I = [1,\infty).$ In both cases $S_p^\circ(t)\omega_0$ solves \eqref{HE_p} for all  $\omega_0\in\ell^p$ and $p$ in the interior of $I.$
        \item[(b)] If the weighted simplicial complex has exponential volume growth, then the semigroup $S_2^\circ$ extends consistently to a strongly continuous semigroup $S_p^\circ$ on $\ell^p$ for all $1\leq p< \infty$ and $S_p^\circ(t)\omega_0$ solves \eqref{HE_p} for all  $\omega_0\in\ell^p$ and  all $1<p<\infty.$
        \item [(c)]If $\sup_\Sigma(\dim+2)\gamma\slash m<\infty,$ then the semigroup $S^\circ_2$ extends consistently to an analytic semigroup $S^\circ_p$ on $\ell^p$  and $S_p^\circ(t)\omega_0$ solves \eqref{HE_p} for  all  $\omega_0\in\ell^p$ and all $1\leq p\leq\infty.$
    \end{itemize}
\end{theorem}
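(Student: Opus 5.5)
The statement collects earlier results, so the plan is to translate the hypotheses into the magnetic Schrödinger framework and then quote the corresponding theorems. By \cite[Theorem~3.19, Lemma~3.13]{BK}, each $\Delta^\circ_2$ is the operator associated to a positive, closable form $Q^\circ$ of a magnetic Schrödinger graph $(b^\circ,o^\circ,c^\circ)$ over $(\Sigma,m)$, with $o^\circ\in\{\pm1\}$. The fixed metric $d^\circ$ is intrinsic for $b^\circ$ with finite jump size. Hence the standing assumptions of Sections~\ref{s:magneticschrodinger} and~\ref{s:semigroupsonlp} hold, and $S^\circ_2$ is the semigroup $S_2$ there.

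For (a), I first identify the curvature condition with form boundedness of $c^\circ$. For $\circ=H$, the Forman curvature $c^H/m$ is precisely the potential, so form bounded curvature is literally the hypothesis of Theorem~\ref{formboundedpotential}. That theorem gives consistent strongly continuous extensions for all $p$ in the closed interval $I$.

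In the case $M=0$, i.e. $c^H/m$ bounded below, I would instead invoke Theorem~\ref{boundedpotential}. This gives the extension for all $1\le p<\infty$.

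For $\circ=\pm$, I would need the analogous potentials $c^\pm$ from \cite{BK} to be controlled by the same curvature hypothesis. This is the point I expect to be the main obstacle. If it is not available, I would read ``curvature'' for $\Delta^\pm$ as form boundedness of $c^\pm_-$ and proceed identically.

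Solvability of \eqref{HE_p} in the interior of $I$ then follows from Proposition~\ref{analyticity}(a). Analyticity gives $S_p(t)\omega_0\in D(\Delta^\circ_p)$ for $t>0$ and differentiability in $\ell^p$. Since point evaluations are continuous on $\ell^p$, this yields pointwise differentiability and the equation \eqref{HE_p} at every simplex.

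For (b), exponential volume growth with respect to $d^\circ$ is exactly the hypothesis of Theorem~\ref{lpextend}. That theorem gives strongly continuous extensions for $1\le p<\infty$. Proposition~\ref{analyticity}(b) then gives solutions for $1<p<\infty$, by the same pointwise argument as in (a).

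For (c), Proposition~\ref{boundedness}(b) shows $\delta,\partial$ are bounded on every $\ell^p$, and the formal Laplacian $\mathcal L^\circ$ is bounded with norm at most $D$ (resp.\ $2D$). The power series $\sum_k (-t)^k(\mathcal L^\circ)^k/k!$ then defines a norm-continuous, hence analytic, semigroup on $\ell^p$, including $p=1,\infty$. It agrees with $S^\circ_2$ on $\ell^2\cap\ell^p$ because the series agree there.

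Since the generator is bounded, $\omega_t=S_p(t)\omega_0$ is differentiable in norm, with $\omega_t'=-\Delta^\circ_p\omega_t$ for every $\omega_0\in\ell^p$. Evaluating at each simplex gives \eqref{HE_p}. The remaining steps are routine bookkeeping of consistency, which is handled by density of $C_c(\Sigma)$ as in Lemma~\ref{strongcont}.
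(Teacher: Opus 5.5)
Your proposal is correct and matches the paper's argument: the paper also obtains this theorem as a direct consequence of Theorems~\ref{formboundedpotential}, \ref{boundedpotential}, \ref{lpextend}, Proposition~\ref{analyticity} and Proposition~\ref{boundedness}, using the signed Schr\"odinger representation from \cite{BK}. For $\circ=\pm$, your reading of the curvature hypothesis as form boundedness of $c^\pm_-$ is the one the paper implicitly uses as well, as its hypothesis ``$c^\circ/m$ form bounded'' in Theorem~\ref{lpIndependenceSC} shows.
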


Finally, we have the following result on the  $\ell^p$-spectrum. We start with the inclusion of the spectra under the assumption of uniformly positive measure which we directly deduce from Theorem~\ref{finitemeasure}.

\begin{theorem}\label{finitemeasureSC} Let  $ 1\leq p< \infty $. Assume that  $ S_{2} ^\circ$, $\circ\in\{\pm,H\}$, extends to a strongly continuous semigroup on $ \ell^{p} $  and  $\inf_{\tau\in \Sigma} m(\tau)>0.$ Then,
    \[\sigma( \Delta^\circ_2)\subseteq\sigma( \Delta^\circ_p).\]
\end{theorem}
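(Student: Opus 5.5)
This will be a direct reduction to Theorem~\ref{finitemeasure}, applied to the magnetic Schrödinger operator representing $\Delta^\circ_2$.

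First, by \cite[Theorem~3.19]{BK} (see also \cite[Lemma~3.13]{BK}) there are edge weights $b^\circ$, a magnetic potential $o^\circ$ with values in $\{\pm1\}$, and a potential $c^\circ$ on $(\Sigma,m)$ such that $\Delta^\circ_2$ is the positive self-adjoint operator $H_2$ associated to the closure of the form $Q$ of the magnetic Schrödinger graph $(b^\circ,o^\circ,c^\circ)$ over $(\Sigma,m)$. I need to check two points here.
\begin{itemize}
\item The form $Q$ on $C_c(\Sigma)$ coincides with $\mathcal{Q}^\circ$ restricted to $C_c(\Sigma)$. Hence $Q$ is positive and closable, and its closure is $Q^\circ$, since $D(Q^\circ)$ was defined as the form closure of $C_c(\Sigma)$.
\item The standing assumption of Section~\ref{s:magneticschrodinger} is therefore met. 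Moreover, $S_2=S^\circ_2$ and $H_2=\Delta^\circ_2$.
\end{itemize}

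Next, the hypothesis says that $S^\circ_2$ extends consistently to a strongly continuous semigroup $S^\circ_p$ on $\ell^p(\Sigma,m)$, with generator $-\Delta^\circ_p$. The $\ell^p$ spaces of the Schrödinger setting and of the simplicial setting are the same, because both use the same measure $m$. So $S^\circ_p$ is exactly the semigroup $S_p$ of Theorem~\ref{finitemeasure}, and $\Delta^\circ_p=H_p$.

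Finally, the assumption $\inf_{\tau\in\Sigma}m(\tau)>0$ is precisely the uniform positivity hypothesis of Theorem~\ref{finitemeasure}. Applying that theorem gives $\sigma(\Delta^\circ_2)=\sigma(H_2)\subseteq\sigma(H_p)=\sigma(\Delta^\circ_p)$.

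The only step that needs any care is the identification in the first step. One must confirm that the operator built from the Schrödinger representation is the Dirichlet Laplacian $\Delta^\circ_2$, i.e.\ that the form closures agree, and not some other self-adjoint extension. This follows from the equality of the two forms on $C_c(\Sigma)$. Everything else is a direct citation of Theorem~\ref{finitemeasure}.
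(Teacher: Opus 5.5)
Your proposal is correct and follows the paper's route exactly: the paper deduces this theorem directly from Theorem~\ref{finitemeasure} by viewing $\Delta^\circ_2$ as the magnetic Schr\"odinger operator of $(b^\circ,o^\circ,c^\circ)$ over $(\Sigma,m)$, using the representation from \cite{BK}. Your additional check that the two forms agree on $C_c(\Sigma)$, so that their closures coincide and hence $H_2=\Delta^\circ_2$, is precisely the identification the paper leaves implicit.
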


\begin{remark}
  Again,  if $S_2^{\circ}$ extends to a strongly continuous semigroup on $\ell^1$, then by duality, we also have
    $\sigma( \Delta_2^{\circ})\subseteq  \sigma( \Delta^{\circ}_\infty) .$
\end{remark}

Finally, we come to  independence of the $\ell^p$-spectrum under form bounded curvature and subexponential volume growth. This follows directly from Theorem~\ref{lpIndependence}.

\begin{theorem}[Independence of $\ell^p$-Spectrum]\label{lpIndependenceSC}
   Let $\circ\in\{\pm,H\}.$ If $c^\circ\slash m$ is form bounded  and the graph $b^\circ$ has uniform subexponential volume growth with respect to $d^\circ,$ then for all $1\leq p\leq\infty$ \[\sigma( \Delta^\circ_2) = \sigma( \Delta^\circ_p).\]
\end{theorem}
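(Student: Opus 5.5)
The plan is to reduce Theorem~\ref{lpIndependenceSC} directly to Theorem~\ref{lpIndependence}, using the representation of $\Delta^\circ_2$ as a signed (magnetic) Schr\"odinger operator from \cite{BK}. By \cite[Theorem~3.19 and Lemma~3.13]{BK}, for each $\circ\in\{\pm,H\}$ there are an edge weight $b^\circ$, a magnetic potential $o^\circ$ with values in $\{\pm1\}\subseteq\mathbb{T}$ and a potential $c^\circ$ on $(\Sigma,m)$ such that $(b^\circ,o^\circ,c^\circ)$ is a magnetic Schr\"odinger graph. Its form on $C_c(\Sigma)$ agrees with $\mathcal{Q}^\circ$, so the associated form is positive. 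It is closable, since form boundedness of $c^\circ/m$ gives closability by \cite[Proposition~2.8]{GKS}; alternatively, closability of $\mathcal{Q}^\circ$ is already shown in \cite{BK}. Finally, its closure $Q$ coincides with $Q^\circ$, because both are the closure of the same form on $C_c(\Sigma)$. Hence the self-adjoint operator $H_2$ of Section~\ref{s:magneticschrodinger} is exactly $\Delta^\circ_2$, and the semigroups $S_2$ and $S^\circ_2$ coincide. This verifies the standing assumptions of Sections~\ref{s:magneticschrodinger}--\ref{s:p-independence}.

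Next I check the hypotheses of Theorem~\ref{lpIndependence}. The fixed metric $d^\circ$ is an intrinsic metric for $b^\circ$ with finite jump size, which is precisely the standing assumption of Section~\ref{s:magneticschrodinger}. Form boundedness of $c^\circ/m$ with respect to $Q^\circ$ is, by the identification above, form boundedness of the potential with respect to $Q$. Uniform subexponential volume growth of $b^\circ$ with respect to $d^\circ$ is, by definition, the growth condition of Subsection~\ref{sectionVolGrowth}.

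By Example~\ref{exampleexpvolgrowth} (via Theorem~\ref{lpextend}), $S_2$ then extends consistently to strongly continuous semigroups on $\ell^p$ for $1\le p<\infty$, and to a weak-$\ast$ continuous one for $p=\infty$. Their generators are by definition $\Delta^\circ_p$ for $p<\infty$ and $\Delta^\circ_\infty=(\Delta^\circ_1)^*$, which matches the definition $H_\infty=(H_1)^*$. Theorem~\ref{lpIndependence} therefore gives $\sigma(\Delta^\circ_2)=\sigma(H_2)=\sigma(H_p)=\sigma(\Delta^\circ_p)$ for all $1\le p\le\infty$.

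The only point needing care is that the generators really coincide, i.e.\ that the $\ell^p$-extension constructed from the Schr\"odinger picture is the same object as the one named $\Delta^\circ_p$. This follows from uniqueness of consistent extensions: $C_c(\Sigma)$ is dense in $\ell^p$ for $p<\infty$, and the case $p=\infty$ follows by duality. I expect no real obstacle beyond bookkeeping: one must confirm that the sign conventions for $c^\circ$ in \cite{BK} match the definition of form boundedness used here, and that $Q^\circ$ is the Dirichlet (closure from $C_c$) form rather than some other extension.
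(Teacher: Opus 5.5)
Your proposal is correct and takes essentially the same route as the paper, which obtains this theorem directly from Theorem~\ref{lpIndependence} via the representation of $\Delta^\circ$ as a signed Schr\"odinger operator from \cite{BK}. Your extra bookkeeping is exactly what the paper leaves implicit: identifying $Q$ with the Dirichlet form $Q^\circ$ and $H_2$ with $\Delta^\circ_2$, matching the form-boundedness and volume-growth hypotheses, and checking that the consistent $\ell^p$-generators coincide.
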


\subsection{Results for Combinatorial Weights}\label{s:combinatorial}

Finally, we discuss the special case of combinatorial weights on simplicial complexes from the introduction. We say that a weighted simplicial complex $( \Sigma,m)$ has \textit{combinatorial weights} if $m\equiv1$. Note that this implies local finiteness of $\Sigma,$ i.e., every simplex has a finite number of cofaces, by the local summability assumption on $m.$ Below, we prove Theorem~\ref{thm:HE} and~\ref{thm:p-independence} from the introduction.
Recall that here we consider the combinatorial graph metric
\[d(v,w)=\min\{ n\mid \mbox{there is a path } {v=v_{0}\sim\ldots\sim v_{n}=w}\}\]
defined on the $1$-skeleton of $\Sigma,$ i.e., the collection of vertices and $1$-simplices, that we identify as a graph.
In order to apply the more abstract results about weighted simplicial complexes, we need to construct an intrinsic metric on all of $\Sigma$ just from $d.$ The remarkable fact here is that this is possible by considering the $1$-dimensional data of the complex only.

\begin{lemma}\label{LemmaSubexpVolGrowthSC}
      Let $\Sigma$ be a weighted simplicial complex with combinatorial weights. If the  volume of balls  grows at most (sub)exponentially with respect to the combinatorial metric $d$, then $\Sigma$ is finite dimensional and there is an intrinsic metric $\rho$ with finite jump size on $\Sigma$ such that the volume of balls grows at most (sub)exponentially with respect to $\rho$. 
\end{lemma}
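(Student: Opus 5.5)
Under the (sub)exponential growth assumption $\#B_r(v)\le C\mathrm{e}^{\nu r}$, I first show finite dimensionality and a uniform degree bound. Taking $r=1$ gives $\#B_1(v)\le C\mathrm{e}^{\nu}=:N$ for all $v\in\Sigma_0$. A $k$-simplex containing $v$ has all its vertices in $B_1(v)$, because the vertices of a simplex are pairwise adjacent in the $1$-skeleton. Hence $k+1\le N$, so $\dim\Sigma\le N-1$. Moreover, the number of simplices containing $v$ is at most $2^{N}$. Consequently every simplex $\tau$ has at most $2^N$ cofaces, and $\gamma(\tau)=\#\{\sigma\succ\tau\}$ is uniformly bounded. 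By \cite[Theorem 4.1]{BK} and Proposition~\ref{boundedness} this already makes the Laplacians bounded, but for the lemma we need the metric.

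Next I construct $\rho$ on $\Sigma$ from $d$. For $\tau,\tau'\in\Sigma$ I set
\[
\rho(\tau,\tau')=a\,\big(d_H(\tau,\tau')\big),\qquad d_H(\tau,\tau')=\max_{v\in\tau}\min_{w\in\tau'}d(v,w)\vee\max_{w\in\tau'}\min_{v\in\tau}d(v,w),
\]
where $d_H$ is the Hausdorff distance of the vertex sets. To keep distinct simplices with equal vertex distance apart I take $\rho(\tau,\tau')=a(d_H(\tau,\tau')+1)$ for $\tau\ne\tau'$ and $\rho(\tau,\tau)=0$, which is still a metric. Here $a>0$ is a small constant to be fixed. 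Since a pseudo metric suffices, the simpler choice $\rho=a\,d_H$ is also acceptable. It is a pseudo metric because $d_H$ satisfies the triangle inequality.

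Intrinsicness is the main step. By \cite[Section 3]{BK}, $b^\circ(\tau,\tau')\neq0$ only if $\tau,\tau'$ share a face or a coface. In both cases all their vertices lie within $d$-distance $1$ of each other, so $d_H(\tau,\tau')\le1$ and the jump size is at most $2a$. The weights $b^\circ(\tau,\tau')$ with $m\equiv1$ take values in $\{0,1\}$, possibly with small integer multiplicities. The number of neighbours of $\tau$ is bounded by the number of simplices meeting $B_1(v)$ for some $v\in\tau$, which is at most $N\cdot 2^{N}\cdot 2^N$ by the degree bound. So
\[
\sum_{\tau'}b^\circ(\tau,\tau')\rho(\tau,\tau')^2\le L\,(2a)^2,
\]
with $L$ uniform. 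Choosing $a=(4L)^{-1/2}$ makes $\rho$ intrinsic with $m\equiv1$. This step requires checking the explicit form of $b^\circ$ from \cite{BK}, namely that its entries are uniformly bounded and supported on simplices sharing a face or coface. I expect this bookkeeping to be the main obstacle.

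Finally I verify volume growth. If $\rho(\tau,\tau')\le r$, then $d_H(\tau,\tau')\le r/a$. Fixing any vertex $v\in\tau$, every vertex of $\tau'$ then lies in $B_{r/a}(v)$. Hence
\[
m(B^\rho_r(\tau))\le \#\{\tau'\subseteq B_{r/a}(v)\}\le 2^{N}\,\#B_{r/a}(v)\le 2^N C\mathrm{e}^{\nu r/a},
\]
using that each vertex lies in at most $2^N$ simplices. Since $m(\tau)=1$, this gives exponential growth with rate $\nu/a$. In the subexponential case I apply the bound with $\varepsilon a$ in place of $\varepsilon$. The degree bound $N$ then depends on $\varepsilon$, but it can be fixed once using $\varepsilon=1$, because $\#B_1(v)\le C_1\mathrm{e}$. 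This yields $m(B^\rho_r(\tau))\le C'_\varepsilon\mathrm{e}^{\varepsilon r}m(\tau)$, as required.
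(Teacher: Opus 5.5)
Your proof is correct, and it takes a genuinely different route from the paper.

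\textbf{The paper's route.} The paper gets the degree bound from Lemma~\ref{LemmaCombVolGrowth}, which is essentially your $\#B_1(v)\le C\mathrm{e}^{\nu}$ argument. It then reuses the degree-weighted path metric $\rho(\sigma,\sigma')=(\dim\Sigma+1)^{-1/2}\inf\sum_k\max_{v\in\sigma_k\cap\sigma_{k-1}}\deg(v)^{-1/2}$ from \cite[Proof of Theorem~6.1]{BK}. This metric is defined on simplices of equal dimension and set to $\infty$ otherwise. Its intrinsicness is imported from \cite{BK}. Volume growth is transferred through the comparison $d(v,v')/\sqrt D\le(\dim\Sigma+1)^{1/2}\rho(\sigma,\sigma')$ for suitable $v\in\sigma$, $v'\in\sigma'$, also taken from \cite{BK}.

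\textbf{Your route.} You use the Hausdorff distance of vertex sets and make it intrinsic by a single global rescaling. This works because, for $m\equiv1$, $b^\circ$ is supported on pairs sharing a face or coface and has bounded entries. The entries can be read off from the displayed formula for $c^H/m$, which gives $b^H(\tau,\tau')=|1-\#\{\sigma\succ\tau,\tau'\}|\in\{0,1\}$, so the bookkeeping you flagged is short. Hence every simplex has uniformly boundedly many $b^\circ$-neighbours.

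\textbf{Comparison.} Your argument is elementary and self-contained. It gives a finite metric across dimensions, and the volume comparison becomes a one-line counting argument. The price is that the crude rescaling by the neighbour bound is available only because the growth hypothesis already forces bounded degree. The paper instead delegates intrinsicness to the degree-adapted construction of \cite{BK} and uses $D$ only to normalise and to compare with $d$.

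\textbf{Three harmless slips.}
\begin{itemize}
\item Two $k$-simplices sharing only a face $\rho$ need not have all their vertices within distance $1$ of each other, since the two vertices outside $\rho$ may be at distance $2$. Still $d_H\le 1$, because each of them is adjacent to every vertex of $\rho\neq\varnothing$. If $\varnothing\in\Sigma$, the complex is finite and the statement is trivial.
\item From $d_H(\tau,\tau')\le r/a$ you only get $\tau'\subseteq B_{r/a+1}(v)$ for a fixed $v\in\tau$. This costs an extra factor $\mathrm{e}^{\nu}$ (respectively $\mathrm{e}^{\varepsilon a}$), which is absorbed into the constant.
\item The shift by $+1$ is unnecessary. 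Since $d$ is integer valued, $d_H\ge1$ for distinct simplices, so $a\,d_H$ is already a metric.
\end{itemize}
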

\begin{proof}
By Lemma \ref{LemmaCombVolGrowth} the combinatorial vertex degree is uniformly bounded by some $D>0.$ This also implies that $\Sigma$ is finite dimensional. Therefore, the combinatorial metric divided by $\sqrt{D}$ is intrinsic with finite jump size, and we denote this metric by $\rho$. We extend $\rho $ to all of $\Sigma$ by
\[\rho(\sigma,\sigma') =(\dim(\Sigma)+1)^{-1/2}\inf_{\sigma=\sigma_{0}\sim\ldots\sim \sigma_{n}=\sigma'}\sum_{k=1}^{n}\max_{v\in\sigma_k\cap\sigma_{k-1} }\deg(v)^{-1/2}
 \]
for all $\sigma,\sigma'$ of the same dimension and by infinity for simplices of different dimension. In \cite[Proof of Theorem~6.1]{BK}, we have shown that $\rho$ is intrinsic on $\Sigma_k$, $1\le k\le \dim(\Sigma).$ Clearly, it has finite jump size as well. Furthermore, it was shown there that for all $\sigma,\sigma'\in\Sigma_k$, we find $v\in\sigma,v'\in\sigma'$ such that 
\[d(v,v')\slash\sqrt D = \rho(v,v')\leq  (\dim(\Sigma)+1)^{1/2}\rho(\sigma,\sigma').\]
Thus, we can conclude the (sub-)exponential volume growth of balls with respect to $\rho$ from that of $d$ since every $\sigma$ contains exactly $\dim(\sigma)+1$ vertices.
\end{proof}

\begin{lemma}\label{LemmaVolGrowthSC}
    Let $\Sigma$ be a weighted simplicial complex with combinatorial weights. Then, the following statements are equivalent:
    \begin{itemize}
        \item[(i)] The volume of balls grows at most exponentially with respect to some intrinsic metric with finite jump size on the $1$-skeleton.
        \item[(ii)]   The volume of balls grows at most exponentially with respect to the combinatorial metric on the $1$-skeleton.
        \item[(iii)] The combinatorial degree on the  $1$-skeleton is bounded.
        \item[(iv)] The coboundary operator $\delta$ is bounded on $\ell^p$ for some (all) $1\leq p< \infty.$
    \end{itemize}
    In that case there exists an intrinsic metric of finite jump size on $\Sigma$ such that the volume of balls grows at most exponentially.   Moreover, $\Delta^\circ$ is bounded on $\ell^p$ for all $1\leq p\leq\infty$ and $\circ\in\lbrace\pm,H\rbrace.$
\end{lemma}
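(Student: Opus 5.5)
We plan to prove the cycle of implications (ii)$\Rightarrow$(i)$\Rightarrow$(iii)$\Rightarrow$(ii), then (iii)$\Leftrightarrow$(iv), and finally to read off the supplementary statements. Throughout, $m\equiv 1$, so the volume of a ball is simply its cardinality and $\gamma(\tau)=\#\{\sigma\succ\tau\}$.

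\textbf{The cycle (ii)$\Rightarrow$(i)$\Rightarrow$(iii)$\Rightarrow$(ii).} For (ii)$\Rightarrow$(i) we invoke Lemma~\ref{LemmaSubexpVolGrowthSC}. It gives a uniform degree bound $D$ and shows that $d/\sqrt{D}$ is intrinsic on the $1$-skeleton with jump size $1/\sqrt D$. Rescaling the metric by $\sqrt D$ only changes the growth rate, so exponential growth persists. For (i)$\Rightarrow$(iii) we apply Lemma~\ref{LemmaCombVolGrowth} to the graph on vertices given by the $1$-skeleton with $m\equiv1$. This yields a bound $C^2\mathrm{e}^{2\nu s}$ on the combinatorial degree. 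For (iii)$\Rightarrow$(ii), a vertex degree bound $D$ gives $\#B_r(v)\le 1+D+\dots+D^r\le C\mathrm{e}^{r\log(D+1)}$, which is exponential growth with respect to the combinatorial metric.

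\textbf{The equivalence (iii)$\Leftrightarrow$(iv).} Here we use Proposition~\ref{boundedness}(c), which applies once $\dim\Sigma<\infty$. So we must relate $\sup_\Sigma\gamma$ to the vertex degree. Suppose the vertex degrees are bounded by $D$. Every simplex containing $v$ has all its vertices among $v$ and its neighbours, so $\dim\Sigma\le D$. The cofaces of a simplex $\tau$ are obtained by adding a vertex adjacent to some vertex of $\tau$, so $\gamma(\tau)\le D$. Hence $\sup\gamma/m<\infty$, and $\delta$ is bounded on all $\ell^p$ by Proposition~\ref{boundedness}(b).

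Conversely, suppose $\delta$ is bounded on $\ell^p$ for some $1\le p<\infty$. The last step of the proof of Proposition~\ref{boundedness}(c) did not use finite dimensionality: testing $\delta$ on $1_\tau$ gives $\gamma(\tau)\le \|\delta\|_{p,p}^p$. Applying this to vertices $\tau=\{v\}$, whose cofaces are exactly the edges at $v$, gives $\deg(v)\le\|\delta\|^p_{p,p}$, which is (iii).

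\textbf{Remaining claims.} Under (ii), Lemma~\ref{LemmaSubexpVolGrowthSC} supplies an intrinsic metric $\rho$ of finite jump size on all of $\Sigma$ with at most exponential volume growth. Also, $\sup(\dim+2)\gamma/m\le (D+2)D<\infty$, so Proposition~\ref{boundedness}(b) yields that $\Delta^\circ$ is bounded on $\ell^p$ for all $1\le p\le\infty$.

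\textbf{Main obstacle.} We expect the only delicate point to be the combinatorial counting linking $\gamma$, $\dim\Sigma$ and vertex degrees, together with checking that (i) is genuinely weaker than (ii). The latter requires applying Lemma~\ref{LemmaCombVolGrowth} to the vertex graph with an arbitrary intrinsic metric. Because $m\equiv1$ on vertices, that lemma goes through verbatim, so the step should reduce to bookkeeping.
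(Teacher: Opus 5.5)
Your proposal is correct and follows the paper's proof essentially step for step. It uses the same cycle (i)$\Rightarrow$(iii)$\Rightarrow$(ii)$\Rightarrow$(i) via Lemma~\ref{LemmaCombVolGrowth} and Lemma~\ref{LemmaSubexpVolGrowthSC}, the same test of $\delta$ on $1_v$ for (iv)$\Rightarrow$(iii), and Proposition~\ref{boundedness}(b) for (iii)$\Rightarrow$(iv) and for the boundedness of $\Delta^\circ$; the only cosmetic difference is that you bound $\#B_r(v)\le (D+1)^r$ directly in the combinatorial metric instead of passing through the normalized metric $d/\sqrt{D}$.
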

\begin{proof}
(i) $\Rightarrow$ (iii): This follows from Lemma~\ref{LemmaCombVolGrowth}.

(iii) $\Rightarrow$ (ii):  If the combinatorial degree is bounded by $D\ge 1,$ then the combinatorial metric divided by $\sqrt{D}$ is intrinsic with finite jump size. Hence, the balls grow at most exponentially with respect to this metric, i.e., $\#B_r(x)\leq D^{r\sqrt{D}}$.

(ii) $\Rightarrow$ (i): In the proof of Lemma \ref{LemmaSubexpVolGrowthSC} we have seen that the combinatorial metric is intrinsic after normalization with some constant that depends on the dimension of $\Sigma$ only.

(iii) $\Rightarrow$ (iv):
Let the combinatorial degree be bounded by $D.$ This already forces $\Sigma$ to be finite dimensional as well as $\gamma\leq D.$ Thus, we get for every $\tau\in\Sigma$ the estimate \[\gamma(\tau)(\dim(\tau)+2)\leq D(\dim\Sigma+2)<\infty.\] We now conclude (iv) from Proposition~\ref{boundedness}.

(iv) $\Rightarrow$ (iii): Let $\delta$ be bounded on $\ell^p$ for some $1\leq p<\infty$ and choose an arbitrary vertex $v.$ Since the combinatorial degree of $v$ is precisely given by $\Vert \delta 1_v\Vert_p^p,$ we conclude (iii). 

The last part of the statement follows from Lemma \ref{LemmaSubexpVolGrowthSC} and Proposition \ref{boundedness}.
\end{proof}

We finish the paper with the proof of Theorem~\ref{thm:HE} and~\ref{thm:p-independence} from the introduction.

\begin{proof}[Proof of Theorem~\ref{thm:HE}]
 By Theorem~\ref{actionHp} and \cite[Theorem 3.12]{BK} (see also the remark after Proposition \ref{boundedness}) we have that the generator $\Delta_p^H$ acts as the formal Laplacian $\partial\delta+\delta\partial$ on $D(\Delta_p^H)$ for $1\leq p\leq \infty$ in case the $\ell^2$ semigroups extend to $\ell^p$. 
Assertion (a) follows directly from  Theorem~\ref{semigroupextensioncomplexes}~(a).
For assertion (b)  in the case of expontial volume growth, we get from the lemma above that the combinatorial degree on the $1$-skeleton is bounded by some $D>0.$ This implies that $\Sigma$ is finite dimensional and $\gamma\leq D.$ We conclude the statement now from Theorem \ref{semigroupextensioncomplexes} (c).
\end{proof}

\begin{proof}[Proof of Theorem~\ref{thm:p-independence}]
By Lemma~\ref{LemmaSubexpVolGrowthSC}, there exists an intrinsic metric on $\Sigma$ with finite jump size such that the volume of balls grows at most subexponentially. Furthermore, by  Lemma~\ref{LemmaVolGrowthSC}, we obtain that $\Delta^H$ is bounded on $\ell^1.$ Therefore,  $S_2^H$ extends to a strongly continuous semigroup $S_1^H$ on $\ell^1$ with $\Vert S_1^H(t)\Vert_{1,1}\leq \mathrm{e}^{t\Vert \Delta^H\Vert_{1,1}}.$ Hence, by Proposition~\ref{lowerboundpotential}, the curvature is bounded from below and, in particular, form bounded. Thus, the result is  a consequence of Theorem~\ref{lpIndependenceSC}.
\end{proof}

{\bf Acknowledgements:} The authors acknowledge support by the DFG and thank Delio Mugnolo for a hint on the literature. The second author acknowledges the financial support and hospitality of the IIAS, Jerusalem.

\bibliographystyle{alpha}
\bibliography{literature}

@book{deRham,
    author = {de Rham, Georges},
    title = {Differentiable Manifolds: Forms, Currents, Harmonic Forms},
    publisher = {Springer},
    year = {1984}
}

@article {Do83,
    AUTHOR = {Dodziuk, Jozef},
     TITLE = {Maximum principle for parabolic inequalities and the heat flow
              on open manifolds},
   JOURNAL = {Indiana Univ. Math. J.},
  FJOURNAL = {Indiana University Mathematics Journal},
    VOLUME = {32},
      YEAR = {1983},
    NUMBER = {5},
     PAGES = {703--716},
      ISSN = {0022-2518,1943-5258},
   MRCLASS = {58G11 (35K05)},
  MRNUMBER = {711862},
MRREVIEWER = {P.\ G\"unther},
       DOI = {10.1512/iumj.1983.32.32046},
       URL = {https://doi.org/10.1512/iumj.1983.32.32046},
}

@incollection {DM97,
    AUTHOR = {Dodziuk, Jozef and Mathai, Varghese},
     TITLE = {Approximating {$L^2$} invariants of amenable covering spaces:
              a heat kernel approach},
 BOOKTITLE = {Lipa's legacy ({N}ew {Y}ork, 1995)},
    SERIES = {Contemp. Math.},
    VOLUME = {211},
     PAGES = {151--167},
 PUBLISHER = {Amer. Math. Soc., Providence, RI},
      YEAR = {1997},
      ISBN = {0-8218-0671-8},
   MRCLASS = {58G11 (58G18 58G25)},
  MRNUMBER = {1476985},
MRREVIEWER = {Edward\ L.\ Bueler},
       DOI = {10.1090/conm/211/02818},
       URL = {https://doi.org/10.1090/conm/211/02818},
}

@article {DM98,
    AUTHOR = {Dodziuk, Jozef and Mathai, Varghese},
     TITLE = {Approximating {$L^2$} invariants of amenable covering spaces:
              a combinatorial approach},
   JOURNAL = {J. Funct. Anal.},
  FJOURNAL = {Journal of Functional Analysis},
    VOLUME = {154},
      YEAR = {1998},
    NUMBER = {2},
     PAGES = {359--378},
      ISSN = {0022-1236,1096-0783},
   MRCLASS = {58G26 (43A07 57M10)},
  MRNUMBER = {1612713},
MRREVIEWER = {Tadeusz\ Januszkiewicz},
       DOI = {10.1006/jfan.1997.3205},
       URL = {https://doi.org/10.1006/jfan.1997.3205},
}

@article{Eckmann1944,
    author = {Eckmann, Beno},
    title = {Harmonische Funktionen und Randwertaufgaben in einem Komplex},
    journal = {Comment. Math. Helv.},
    volume = {17},
    pages = {240--255},
    year = {1944}
}

@book {EngelNagelShort,
    AUTHOR = {Engel, Klaus-Jochen and Nagel, Rainer},
     TITLE = {A short course on operator semigroups},
    SERIES = {Universitext},
 PUBLISHER = {Springer, New York},
      YEAR = {2006},
     PAGES = {x+247},
      ISBN = {978-0387-31341-2; 0-387-31341-9},
   MRCLASS = {47-01 (47D03 47D06)},
  MRNUMBER = {2229872},
MRREVIEWER = {Jacek\ Banasiak},
}

@article{HempelVoigt1986,
    author = {Hempel, R. and Voigt, J.},
    title = {The Spectrum of a {S}chr\"odinger Operator in ${L}^p(\mathbb{R}^d)$ is $p$-Independent},
    journal = {Commun. Math. Phys.},
    volume = {104},
    pages = {243--250},
    year = {1986}
}

@article {HempelVoigt1987,
    AUTHOR = {Hempel, Rainer and Voigt, J\"urgen},
     TITLE = {On the {$L_p$}-spectrum of {S}chr\"odinger operators},
   JOURNAL = {J. Math. Anal. Appl.},
  FJOURNAL = {Journal of Mathematical Analysis and Applications},
    VOLUME = {121},
      YEAR = {1987},
    NUMBER = {1},
     PAGES = {138--159},
      ISSN = {0022-247X},
   MRCLASS = {35P05 (47F05 81C35)},
  MRNUMBER = {869525},
MRREVIEWER = {Shu\ Nakamura},
       DOI = {10.1016/0022-247X(87)90244-7},
       URL = {https://doi.org/10.1016/0022-247X(87)90244-7},
}

@book{jost2026spectra,
  author = {Jost, J\"urgen and Mulas, Raffaella and Zhang, Dong},
  title = {Spectra of Discrete Structures},
  series = {Cambridge Studies in Advanced Mathematics},
  publisher = {Cambridge University Press},
  address = {Cambridge},
  year = {2026},
  note = {Forthcoming},
}

@incollection {MHJ,
    AUTHOR = {Mulas, Raffaella and Horak, Danijela and Jost, J\"urgen},
     TITLE = {Graphs, simplicial complexes and hypergraphs: spectral theory
              and topology},
 BOOKTITLE = {Higher-order systems},
    SERIES = {Underst. Complex Syst.},
     PAGES = {1--58},
 PUBLISHER = {Springer, Cham},
      YEAR = {[2022] \copyright 2022},
      ISBN = {978-3-030-91373-1; 978-3-030-91374-8},
   MRCLASS = {05-01 (05C10 05C50 05C90 05E45)},
  MRNUMBER = {4433789},
       DOI = {10.1007/978-3-030-91374-8\_1},
       URL = {https://doi.org/10.1007/978-3-030-91374-8_1},
}

@article {MKBJ,
    AUTHOR = {Mulas, Raffaella and Kuehn, Christian and B\"ohle, Tobias and
              Jost, J\"urgen},
     TITLE = {Random walks and {L}aplacians on hypergraphs: when do they
              match?},
   JOURNAL = {Discrete Appl. Math.},
  FJOURNAL = {Discrete Applied Mathematics. The Journal of Combinatorial
              Algorithms, Informatics and Computational Sciences},
    VOLUME = {317},
      YEAR = {2022},
     PAGES = {26--41},
      ISSN = {0166-218X,1872-6771},
   MRCLASS = {05C50 (05C65 05C81)},
  MRNUMBER = {4420488},
MRREVIEWER = {Anirban\ Banerjee},
       DOI = {10.1016/j.dam.2022.04.009},
       URL = {https://doi.org/10.1016/j.dam.2022.04.009},
}

@book{Warner1983,
    AUTHOR = {Strichartz, Robert S.},
     TITLE = {Analysis of the {L}aplacian on the complete {R}iemannian
              manifold},
   JOURNAL = {J. Functional Analysis},
  FJOURNAL = {Journal of Functional Analysis},
    VOLUME = {52},
      YEAR = {1983},
    NUMBER = {1},
     PAGES = {48--79},
      ISSN = {0022-1236},
   MRCLASS = {58G11 (47D05 58G25)},
  MRNUMBER = {705991},
MRREVIEWER = {J\'ozef\ Dodziuk},
       DOI = {10.1016/0022-1236(83)90090-3},
       URL = {https://doi.org/10.1016/0022-1236(83)90090-3},
}

@article {ATH,
    AUTHOR = {Ann\'e, Colette and Torki-Hamza, Nabila},
     TITLE = {The {G}auss-{B}onnet operator of an infinite graph},
   JOURNAL = {Anal. Math. Phys.},
  FJOURNAL = {Analysis and Mathematical Physics},
    VOLUME = {5},
      YEAR = {2015},
    NUMBER = {2},
     PAGES = {137--159},
      ISSN = {1664-2368,1664-235X},
   MRCLASS = {39A70 (05C12 05C50 05C63 47B25)},
  MRNUMBER = {3344097},
       DOI = {10.1007/s13324-014-0090-0},
       URL = {https://doi.org/10.1007/s13324-014-0090-0},
}

@article{BK,
  title={{O}n {H}odge {L}aplacians on {G}eneral {S}implicial {C}omplexes},
  author={Bartmann, Philipp and Keller, Matthias},
  journal={arXiv preprint arXiv:2508.07761},
  year={2025}
}

@book {Barlow,
    AUTHOR = {Barlow, Martin T.},
     TITLE = {Random walks and heat kernels on graphs},
    SERIES = {London Mathematical Society Lecture Note Series},
    VOLUME = {438},
 PUBLISHER = {Cambridge University Press, Cambridge},
      YEAR = {2017},
     PAGES = {xi+226},
      ISBN = {978-1-107-67442-4},
   MRCLASS = {60-02 (05C63 05C81 60J10 60J45)},
  MRNUMBER = {3616731},
MRREVIEWER = {Nicolas\ Curien},
       DOI = {10.1017/9781107415690},
       URL = {https://doi.org/10.1017/9781107415690},
}

@article {BauerHuaKeller,
    AUTHOR = {Bauer, Frank and Hua, Bobo and Keller, Matthias},
     TITLE = {On the {$l^p$} spectrum of {L}aplacians on graphs},
   JOURNAL = {Adv. Math.},
  FJOURNAL = {Advances in Mathematics},
    VOLUME = {248},
      YEAR = {2013},
     PAGES = {717--735},
      ISSN = {0001-8708,1090-2082},
   MRCLASS = {47A10 (05C50)},
  MRNUMBER = {3107525},
MRREVIEWER = {Yufei\ Huang},
       DOI = {10.1016/j.aim.2013.05.029},
       URL = {https://doi.org/10.1016/j.aim.2013.05.029},
}

@article {BHY1,
    AUTHOR = {Bauer, Frank and Hua, Bobo and Yau, Shing-Tung},
     TITLE = {Davies-{G}affney-{G}rigor'yan lemma on graphs},
   JOURNAL = {Comm. Anal. Geom.},
  FJOURNAL = {Communications in Analysis and Geometry},
    VOLUME = {23},
      YEAR = {2015},
    NUMBER = {5},
     PAGES = {1031--1068},
      ISSN = {1019-8385,1944-9992},
   MRCLASS = {58J35},
  MRNUMBER = {3458812},
MRREVIEWER = {Qihua\ Ruan},
       DOI = {10.4310/CAG.2015.v23.n5.a4},
       URL = {https://doi.org/10.4310/CAG.2015.v23.n5.a4},
}

@article {BHY2,
    AUTHOR = {Bauer, Frank and Hua, Bobo and Yau, Shing-Tung},
     TITLE = {Sharp {D}avies-{G}affney-{G}rigor'yan lemma on graphs},
   JOURNAL = {Math. Ann.},
  FJOURNAL = {Mathematische Annalen},
    VOLUME = {368},
      YEAR = {2017},
    NUMBER = {3-4},
     PAGES = {1429--1437},
      ISSN = {0025-5831,1432-1807},
   MRCLASS = {58J35 (05C63)},
  MRNUMBER = {3673659},
MRREVIEWER = {Thierry\ Coulhon},
       DOI = {10.1007/s00208-017-1529-z},
       URL = {https://doi.org/10.1007/s00208-017-1529-z},
}

@article {Braun,
    AUTHOR = {Braun, Mathias},
     TITLE = {Heat flow on 1-forms under lower {R}icci bounds. {F}unctional
              inequalities, spectral theory, and heat kernel},
   JOURNAL = {J. Funct. Anal.},
  FJOURNAL = {Journal of Functional Analysis},
    VOLUME = {283},
      YEAR = {2022},
    NUMBER = {7},
     PAGES = {Paper No. 109599, 65},
      ISSN = {0022-1236,1096-0783},
   MRCLASS = {35K08 (35P15 58A14 58C40 58J35)},
  MRNUMBER = {4444737},
       DOI = {10.1016/j.jfa.2022.109599},
       URL = {https://doi.org/10.1016/j.jfa.2022.109599},
}

@article {CHQSZ,
    AUTHOR = {Cao, Shiping and Huang, Yiqi and Qiu, Hua and Strichartz,
              Robert S. and Zhu, Xiaohan},
     TITLE = {Spectral analysis beyond {$\ell^2$} on {S}ierpinski lattices},
   JOURNAL = {J. Fourier Anal. Appl.},
  FJOURNAL = {The Journal of Fourier Analysis and Applications},
    VOLUME = {27},
      YEAR = {2021},
    NUMBER = {3},
     PAGES = {Paper No. 55, 19},
      ISSN = {1069-5869,1531-5851},
   MRCLASS = {47A10 (28A80 47B37)},
  MRNUMBER = {4269450},
MRREVIEWER = {Saurabh\ Verma},
       DOI = {10.1007/s00041-021-09853-y},
       URL = {https://doi.org/10.1007/s00041-021-09853-y},
}

@article {Ch,
    AUTHOR = {Charalambous, Nelia},
     TITLE = {On the {$L^p$} independence of the spectrum of the {H}odge
              {L}aplacian on non-compact manifolds},
   JOURNAL = {J. Funct. Anal.},
  FJOURNAL = {Journal of Functional Analysis},
    VOLUME = {224},
      YEAR = {2005},
    NUMBER = {1},
     PAGES = {22--48},
      ISSN = {0022-1236,1096-0783},
   MRCLASS = {58J50 (58J35)},
  MRNUMBER = {2139103},
MRREVIEWER = {Chadwick\ Sprouse},
       DOI = {10.1016/j.jfa.2004.11.003},
       URL = {https://doi.org/10.1016/j.jfa.2004.11.003},
}

@article {ChL,
    AUTHOR = {Charalambous, Nelia and Lu, Zhiqin},
     TITLE = {{$L^p$}-spectral theory for the {L}aplacian on forms},
   JOURNAL = {J. Funct. Anal.},
  FJOURNAL = {Journal of Functional Analysis},
    VOLUME = {289},
      YEAR = {2025},
    NUMBER = {6},
     PAGES = {Paper No. 110976, 44},
      ISSN = {0022-1236,1096-0783},
   MRCLASS = {58J50 (47A10 58A10)},
  MRNUMBER = {4889202},
       DOI = {10.1016/j.jfa.2025.110976},
       URL = {https://doi.org/10.1016/j.jfa.2025.110976},
}

@article {C,
    AUTHOR = {Chebbi, Yassin},
     TITLE = {The discrete {L}aplacian of a 2-simplicial complex},
   JOURNAL = {Potential Anal.},
  FJOURNAL = {Potential Analysis. An International Journal Devoted to the
              Interactions between Potential Theory, Probability Theory,
              Geometry and Functional Analysis},
    VOLUME = {49},
      YEAR = {2018},
    NUMBER = {2},
     PAGES = {331--358},
      ISSN = {0926-2601,1572-929X},
   MRCLASS = {05C50 (05C10 31C20 39A12 47B25)},
  MRNUMBER = {3824965},
MRREVIEWER = {Aingeru\ Fern\'andez\ Bertolin},
       DOI = {10.1007/s11118-017-9659-1},
       URL = {https://doi.org/10.1007/s11118-017-9659-1},
}

@article {CC,
    AUTHOR = {Carron, Gilles and Coulhon, Thierry and Hassell, Andrew},
     TITLE = {Riesz transform and {$L^p$}-cohomology for manifolds with
              {E}uclidean ends},
   JOURNAL = {Duke Math. J.},
  FJOURNAL = {Duke Mathematical Journal},
    VOLUME = {133},
      YEAR = {2006},
    NUMBER = {1},
     PAGES = {59--93},
      ISSN = {0012-7094,1547-7398},
   MRCLASS = {58J50 (42B20 58J35)},
  MRNUMBER = {2219270},
MRREVIEWER = {Emmanuel\ Pedon},
       DOI = {10.1215/S0012-7094-06-13313-6},
       URL = {https://doi.org/10.1215/S0012-7094-06-13313-6},
}

@article {CKK,
    AUTHOR = {Chen, Zhen-Qing and Kim, Daehong and Kuwae, Kazuhiro},
     TITLE = {{$L^p$}-independence of spectral radius for generalized
              {F}eynman-{K}ac semigroups},
   JOURNAL = {Math. Ann.},
  FJOURNAL = {Mathematische Annalen},
    VOLUME = {374},
      YEAR = {2019},
    NUMBER = {1-2},
     PAGES = {601--652},
      ISSN = {0025-5831,1432-1807},
   MRCLASS = {60J25 (31C25 35J10 60J35 60J45 60J57)},
  MRNUMBER = {3961322},
MRREVIEWER = {Ren\'e\ L.\ Schilling},
       DOI = {10.1007/s00208-018-1746-0},
       URL = {https://doi.org/10.1007/s00208-018-1746-0},
}

@article {CCH,
    AUTHOR = {Chen, Li and Coulhon, Thierry and Hua, Bobo},
     TITLE = {Riesz transforms for bounded {L}aplacians on graphs},
   JOURNAL = {Math. Z.},
  FJOURNAL = {Mathematische Zeitschrift},
    VOLUME = {294},
      YEAR = {2020},
    NUMBER = {1-2},
     PAGES = {397--417},
      ISSN = {0025-5874,1432-1823},
   MRCLASS = {42B25 (47B38)},
  MRNUMBER = {4050071},
MRREVIEWER = {Dachun\ Yang},
       DOI = {10.1007/s00209-019-02253-5},
       URL = {https://doi.org/10.1007/s00209-019-02253-5},
}

@incollection {Davies,
    AUTHOR = {Davies, E. B.},
     TITLE = {Heat kernel bounds, conservation of probability and the
              {F}eller property},
      NOTE = {Festschrift on the occasion of the 70th birthday of Shmuel
              Agmon},
   JOURNAL = {J. Anal. Math.},
  FJOURNAL = {Journal d'Analyse Math\'ematique},
    VOLUME = {58},
      YEAR = {1992},
     PAGES = {99--119},
      ISSN = {0021-7670,1565-8538},
   MRCLASS = {58G11 (47D07 47F05 58G32)},
  MRNUMBER = {1226938},
MRREVIEWER = {Kazuaki\ Taira},
       DOI = {10.1007/BF02790359},
       URL = {https://doi.org/10.1007/BF02790359},

}

@book {DaviesLinOp,
    AUTHOR = {Davies, E. Brian},
     TITLE = {Linear operators and their spectra},
    SERIES = {Cambridge Studies in Advanced Mathematics},
    VOLUME = {106},
 PUBLISHER = {Cambridge University Press, Cambridge},
      YEAR = {2007},
     PAGES = {xii+451},
      ISBN = {978-0-521-86629-3; 0-521-86629-4},
   MRCLASS = {47-01 (47A10 47D07 47N30 60J10 60J27)},
  MRNUMBER = {2359869},
MRREVIEWER = {Florian\ Horia\ Vasilescu},
       DOI = {10.1017/CBO9780511618864},
       URL = {https://doi.org/10.1017/CBO9780511618864},
}

@article {Eckmann,
    AUTHOR = {Eckmann, Beno},
     TITLE = {Harmonische {F}unktionen und {R}andwertaufgaben in einem
              {K}omplex},
   JOURNAL = {Comment. Math. Helv.},
  FJOURNAL = {Commentarii Mathematici Helvetici},
    VOLUME = {17},
      YEAR = {1945},
     PAGES = {240--255},
      ISSN = {0010-2571,1420-8946},
   MRCLASS = {56.0X},
  MRNUMBER = {13318},
MRREVIEWER = {H.\ Whitney},
       DOI = {10.1007/BF02566245},
       URL = {https://doi.org/10.1007/BF02566245},
}

@article{ennaceur_jadlaoui_hodge,
  title = {Hodge Laplacians on Weighted Simplicial Complexes: Forms, Closures, and Essential Self-Adjointness},
  author = {Ennaceur, Marwa and Jadlaoui, Amel},
  journal = {arXiv preprint},
  year = {2025},
  eprint = {2510.15546},
  archiveprefix = {arXiv},
  primaryclass = {math.SP},
  url = {https://arxiv.org/abs/2510.15546}
}

@article{ennaceur_jadlaoui_geometric,
  title = {Geometric Criteria for Essential Self-Adjointness of Discrete Hodge Laplacians on Weighted Simplicial Complexes},
  author = {Ennaceur, Marwa and Jadlaoui, Amel},
  journal = {arXiv preprint},
  year = {2025},
  eprint = {2510.18661},
  archiveprefix = {arXiv},
  primaryclass = {math.SP},
  url = {https://arxiv.org/abs/2510.18661}
}

@article {F,
    AUTHOR = {Forman, Robin},
     TITLE = {Bochner's method for cell complexes and combinatorial {R}icci
              curvature},
   JOURNAL = {Discrete Comput. Geom.},
  FJOURNAL = {Discrete \& Computational Geometry. An International Journal
              of Mathematics and Computer Science},
    VOLUME = {29},
      YEAR = {2003},
    NUMBER = {3},
     PAGES = {323--374},
      ISSN = {0179-5376,1432-0444},
   MRCLASS = {52B70 (53C20)},
  MRNUMBER = {1961004},
       DOI = {10.1007/s00454-002-0743-x},
       URL = {https://doi.org/10.1007/s00454-002-0743-x},
}

@article {Gaffney,
    AUTHOR = {Gaffney, Matthew P.},
     TITLE = {The conservation property of the heat equation on {R}iemannian
              manifolds},
   JOURNAL = {Comm. Pure Appl. Math.},
  FJOURNAL = {Communications on Pure and Applied Mathematics},
    VOLUME = {12},
      YEAR = {1959},
     PAGES = {1--11},
      ISSN = {0010-3640,1097-0312},
   MRCLASS = {53.00 (35.00)},
  MRNUMBER = {102097},
       DOI = {10.1002/cpa.3160120102},
       URL = {https://doi.org/10.1002/cpa.3160120102},
}

@article {G,
    AUTHOR = {Gaffney, Matthew P.},
     TITLE = {The harmonic operator for exterior differential forms},
   JOURNAL = {Proc. Nat. Acad. Sci. U.S.A.},
  FJOURNAL = {Proceedings of the National Academy of Sciences of the United
              States of America},
    VOLUME = {37},
      YEAR = {1951},
     PAGES = {48--50},
      ISSN = {0027-8424},
   MRCLASS = {53.0X},
  MRNUMBER = {48138},
MRREVIEWER = {K.\ Kodaira},
       DOI = {10.1073/pnas.37.1.48},
       URL = {https://doi.org/10.1073/pnas.37.1.48},
}

@article {GKS,
    AUTHOR = {G\"uneysu, Batu and Keller, Matthias and Schmidt, Marcel},
     TITLE = {A {F}eynman-{K}ac-{I}t\^o{} formula for magnetic
              {S}chr\"odinger operators on graphs},
   JOURNAL = {Probab. Theory Related Fields},
  FJOURNAL = {Probability Theory and Related Fields},
    VOLUME = {165},
      YEAR = {2016},
    NUMBER = {1-2},
     PAGES = {365--399},
      ISSN = {0178-8051,1432-2064},
   MRCLASS = {39A70 (31C20 35J10 47D08 60H05 60J75 81Q35)},
  MRNUMBER = {3500274},
       DOI = {10.1007/s00440-015-0633-9},
       URL = {https://doi.org/10.1007/s00440-015-0633-9},
}

@book {grigoryanbook,
    AUTHOR = {Grigor'yan, Alexander},
     TITLE = {Introduction to analysis on graphs},
    SERIES = {University Lecture Series},
    VOLUME = {71},
 PUBLISHER = {American Mathematical Society, Providence, RI},
      YEAR = {2018},
     PAGES = {viii+150},
      ISBN = {978-1-4704-4397-9},
   MRCLASS = {60J10 (05C25 05C50 05C63 05C81 58J35)},
  MRNUMBER = {3822363},
MRREVIEWER = {Serguei\ Popov},
       DOI = {10.1090/ulect/071},
       URL = {https://doi.org/10.1090/ulect/071},
}

@article {GrigPath,
    AUTHOR = {Grigor'yan, Alexander and Lin, Yong and Yau, Shing-Tung and
              Zhang, Haohang},
     TITLE = {Eigenvalues of the {H}odge {L}aplacian on digraphs},
   JOURNAL = {Comm. Anal. Geom.},
  FJOURNAL = {Communications in Analysis and Geometry},
    VOLUME = {33},
      YEAR = {2025},
    NUMBER = {4},
     PAGES = {981--1023},
      ISSN = {1019-8385,1944-9992},
   MRCLASS = {05C50 (58J50)},
  MRNUMBER = {4951371},
       DOI = {10.4310/cag.250815152654},
       URL = {https://doi.org/10.4310/cag.250815152654},
}

@article {GHM,
    AUTHOR = {Grigor'yan, Alexander and Huang, Xueping and Masamune, Jun},
     TITLE = {On stochastic completeness of jump processes},
   JOURNAL = {Math. Z.},
  FJOURNAL = {Mathematische Zeitschrift},
    VOLUME = {271},
      YEAR = {2012},
    NUMBER = {3-4},
     PAGES = {1211--1239},
      ISSN = {0025-5874,1432-1823},
   MRCLASS = {60J75 (60J25 60J27 60J45)},
  MRNUMBER = {2945605},
MRREVIEWER = {Longmin\ Wang},
       DOI = {10.1007/s00209-011-0911-x},
       URL = {https://doi.org/10.1007/s00209-011-0911-x},
}

@article {HJ,
    AUTHOR = {Horak, Danijela and Jost, J\"urgen},
     TITLE = {Spectra of combinatorial {L}aplace operators on simplicial
              complexes},
   JOURNAL = {Adv. Math.},
  FJOURNAL = {Advances in Mathematics},
    VOLUME = {244},
      YEAR = {2013},
     PAGES = {303--336},
      ISSN = {0001-8708,1090-2082},
   MRCLASS = {55U10 (18G30 31C20)},
  MRNUMBER = {3077874},
MRREVIEWER = {Paul\ G.\ Goerss},
       DOI = {10.1016/j.aim.2013.05.007},
       URL = {https://doi.org/10.1016/j.aim.2013.05.007},
}

@article {HL,
    AUTHOR = {Hua, Bobo and Luo, Xin},
     TITLE = {Davies-{G}affney-{G}rigor'yan lemma on simplicial complexes},
   JOURNAL = {Math. Z.},
  FJOURNAL = {Mathematische Zeitschrift},
    VOLUME = {290},
      YEAR = {2018},
    NUMBER = {3-4},
     PAGES = {1041--1053},
      ISSN = {0025-5874,1432-1823},
   MRCLASS = {58J35 (05E45 35J05 35K08 55U10)},
  MRNUMBER = {3856843},
MRREVIEWER = {J\'ozef\ Dodziuk},
       DOI = {10.1007/s00209-018-2051-z},
       URL = {https://doi.org/10.1007/s00209-018-2051-z},
}

@article{JM,
  title={Characterizations of Forman curvature},
  author={Jost, J{\"u}rgen and M{\"u}nch, Florentin},
  journal={arXiv preprint arXiv:2110.04554},
  year={2021}
}

@book {K,
    AUTHOR = {Kato, Tosio},
     TITLE = {Perturbation theory for linear operators},
    SERIES = {Classics in Mathematics},
      NOTE = {Reprint of the 1980 edition},
 PUBLISHER = {Springer-Verlag, Berlin},
      YEAR = {1995},
     PAGES = {xxii+619},
      ISBN = {3-540-58661-X},
   MRCLASS = {47A55 (46-00 47-00)},
  MRNUMBER = {1335452},
}

@book {KLW,
    AUTHOR = {Keller, Matthias and Lenz, Daniel and Wojciechowski, Rados\l
              aw K.},
     TITLE = {Graphs and discrete {D}irichlet spaces},
    SERIES = {Grundlehren der mathematischen Wissenschaften},
    VOLUME = {358},
 PUBLISHER = {Springer, Cham},
      YEAR = {2021},
     PAGES = {xv+668},
      ISBN = {978-3-030-81458-8; 978-3-030-81459-5},
   MRCLASS = {05-01 (05C63 35P05)},
  MRNUMBER = {4383783},
       DOI = {10.1007/978-3-030-81459-5},
       URL = {https://doi.org/10.1007/978-3-030-81459-5},
}

@book {Lunardi,
    AUTHOR = {Lunardi, Alessandra},
     TITLE = {Interpolation theory},
    SERIES = {Appunti. Scuola Normale Superiore di Pisa (Nuova Serie)
              [Lecture Notes. Scuola Normale Superiore di Pisa (New
              Series)]},
    VOLUME = {16},
   EDITION = {Third},
 PUBLISHER = {Edizioni della Normale, Pisa},
      YEAR = {2018},
     PAGES = {xiv+199},
      ISBN = {978-88-7642-639-1; 978-88-7642-638-4},
   MRCLASS = {46M35 (46-02 46B70 47D06 47F05)},
  MRNUMBER = {3753604},
       DOI = {10.1007/978-88-7642-638-4},
       URL = {https://doi.org/10.1007/978-88-7642-638-4},
}

@article {Magniez,
    AUTHOR = {Magniez, Jocelyn},
     TITLE = {Riesz transforms of the {H}odge--de {R}ham {L}aplacian on
              {R}iemannian manifolds},
   JOURNAL = {Math. Nachr.},
  FJOURNAL = {Mathematische Nachrichten},
    VOLUME = {289},
      YEAR = {2016},
    NUMBER = {8-9},
     PAGES = {1021--1043},
      ISSN = {0025-584X,1522-2616},
   MRCLASS = {42B20 (31C12 47F05 58J35)},
  MRNUMBER = {3512047},
MRREVIEWER = {Elena\ Cordero},
       DOI = {10.1002/mana.201400307},
       URL = {https://doi.org/10.1002/mana.201400307},
}

@incollection {Masamune,
    AUTHOR = {Masamune, Jun},
     TITLE = {A {L}iouville property and its application to the {L}aplacian
              of an infinite graph},
 BOOKTITLE = {Spectral analysis in geometry and number theory},
    SERIES = {Contemp. Math.},
    VOLUME = {484},
     PAGES = {103--115},
 PUBLISHER = {Amer. Math. Soc., Providence, RI},
      YEAR = {2009},
      ISBN = {978-0-8218-4269-0},
   MRCLASS = {05C50 (05C63 39A70 58J05)},
  MRNUMBER = {1500141},
MRREVIEWER = {J\'ozef\ Dodziuk},
       DOI = {10.1090/conm/484/09468},
       URL = {https://doi.org/10.1090/conm/484/09468},
}

@article{MugnoloHyper,
  title={Non-Markovian heat flows on directed hypergraphs},
  author={Mugnolo, Delio},
  journal={arXiv preprint arXiv:2510.17497},
  year={2025}
}

@article {Parzanchevski,
    AUTHOR = {Parzanchevski, Ori and Rosenthal, Ron},
     TITLE = {Simplicial complexes: spectrum, homology and random walks},
   JOURNAL = {Random Structures Algorithms},
  FJOURNAL = {Random Structures \& Algorithms},
    VOLUME = {50},
      YEAR = {2017},
    NUMBER = {2},
     PAGES = {225--261},
      ISSN = {1042-9832,1098-2418},
   MRCLASS = {05C81 (05C50 05D40 55U10 60D05 60G50)},
  MRNUMBER = {3607124},
MRREVIEWER = {Christian\ Stump},
       DOI = {10.1002/rsa.20657},
       URL = {https://doi.org/10.1002/rsa.20657},
}

@incollection {O,
    AUTHOR = {Ollivier, Yann},
     TITLE = {A survey of {R}icci curvature for metric spaces and {M}arkov
              chains},
 BOOKTITLE = {Probabilistic approach to geometry},
    SERIES = {Adv. Stud. Pure Math.},
    VOLUME = {57},
     PAGES = {343--381},
 PUBLISHER = {Math. Soc. Japan, Tokyo},
      YEAR = {2010},
      ISBN = {978-4-931469-58-7},
   MRCLASS = {58J65 (46E35 53C23 60B05 60J10)},
  MRNUMBER = {2648269},
MRREVIEWER = {Mu\ Fa\ Chen},
       DOI = {10.2969/aspm/05710343},
       URL = {https://doi.org/10.2969/aspm/05710343},
}

@incollection {S,
    AUTHOR = {Schmidt, Marcel},
     TITLE = {On the existence and uniqueness of self-adjoint realizations
              of discrete (magnetic) {S}chr\"odinger operators},
 BOOKTITLE = {Analysis and geometry on graphs and manifolds},
    SERIES = {London Math. Soc. Lecture Note Ser.},
    VOLUME = {461},
     PAGES = {250--327},
 PUBLISHER = {Cambridge Univ. Press, Cambridge},
      YEAR = {2020},
      ISBN = {978-1-108-71318-4},
   MRCLASS = {05C50},
  MRNUMBER = {4412977},
}

@article {Strichartz,
    AUTHOR = {Strichartz, Robert S.},
     TITLE = {Analysis of the {L}aplacian on the complete {R}iemannian
              manifold},
   JOURNAL = {J. Functional Analysis},
  FJOURNAL = {Journal of Functional Analysis},
    VOLUME = {52},
      YEAR = {1983},
    NUMBER = {1},
     PAGES = {48--79},
      ISSN = {0022-1236},
   MRCLASS = {58G11 (47D05 58G25)},
  MRNUMBER = {705991},
MRREVIEWER = {J\'ozef\ Dodziuk},
       DOI = {10.1016/0022-1236(83)90090-3},
       URL = {https://doi.org/10.1016/0022-1236(83)90090-3},
}

@book {T,
    AUTHOR = {Thaller, Bernd},
     TITLE = {The {D}irac equation},
    SERIES = {Texts and Monographs in Physics},
 PUBLISHER = {Springer-Verlag, Berlin},
      YEAR = {1992},
     PAGES = {xviii+357},
      ISBN = {3-540-54883-1},
   MRCLASS = {81Q10 (35Q40 47N50 58G25)},
  MRNUMBER = {1219537},
MRREVIEWER = {P.\ A.\ Mishnayevskiy},
       DOI = {10.1007/978-3-662-02753-0},
       URL = {https://doi.org/10.1007/978-3-662-02753-0},
}

@book {Stein,
    AUTHOR = {Stein, Elias M.},
     TITLE = {Topics in harmonic analysis related to the
              {L}ittlewood-{P}aley theory},
    SERIES = {Annals of Mathematics Studies},
    VOLUME = {No. 63},
 PUBLISHER = {Princeton University Press, Princeton, NJ; University of Tokyo
              Press, Tokyo},
      YEAR = {1970},
     PAGES = {viii+146},
   MRCLASS = {42.50 (22.00)},
  MRNUMBER = {252961},
MRREVIEWER = {R.\ E.\ Edwards},
}

@article {Sturm,
    AUTHOR = {Sturm, Karl-Theodor},
     TITLE = {On the {$L^p$}-spectrum of uniformly elliptic operators on
              {R}iemannian manifolds},
   JOURNAL = {J. Funct. Anal.},
  FJOURNAL = {Journal of Functional Analysis},
    VOLUME = {118},
      YEAR = {1993},
    NUMBER = {2},
     PAGES = {442--453},
      ISSN = {0022-1236,1096-0783},
   MRCLASS = {58G25 (35P05 58G03)},
  MRNUMBER = {1250269},
MRREVIEWER = {Alberto\ G.\ Setti},
       DOI = {10.1006/jfan.1993.1150},
       URL = {https://doi.org/10.1006/jfan.1993.1150},
}

@book {Wojc,
    AUTHOR = {Wojciechowski, Radoslaw Krzysztof},
     TITLE = {Stochastic completeness of graphs},
      NOTE = {Thesis (Ph.D.)--City University of New York},
 PUBLISHER = {ProQuest LLC, Ann Arbor, MI},
      YEAR = {2008},
     PAGES = {87},
      ISBN = {978-0549-58579-4},
   MRCLASS = {99-05},
  MRNUMBER = {2711706},
       URL =
              {http://gateway.proquest.com/openurl?url_ver=Z39.88-2004&rft_val_fmt=info:ofi/fmt:kev:mtx:dissertation&res_dat=xri:pqdiss&rft_dat=xri:pqdiss:3310649},
}

@article {Dodziuk,
    AUTHOR = {Dodziuk, Jozef},
     TITLE = {Finite-difference approach to the {H}odge theory of harmonic
              forms},
   JOURNAL = {Amer. J. Math.},
  FJOURNAL = {American Journal of Mathematics},
    VOLUME = {98},
      YEAR = {1976},
    NUMBER = {1},
     PAGES = {79--104},
      ISSN = {0002-9327,1080-6377},
   MRCLASS = {58A10 (58G99)},
  MRNUMBER = {407872},
MRREVIEWER = {D.\ B.\ Fuchs},
       DOI = {10.2307/2373615},
       URL = {https://doi.org/10.2307/2373615},
}

@book {Chavel,
    AUTHOR = {Chavel, Isaac},
     TITLE = {Eigenvalues in {R}iemannian geometry},
    SERIES = {Pure and Applied Mathematics},
    VOLUME = {115},
      NOTE = {Including a chapter by Burton Randol,
              With an appendix by Jozef Dodziuk},
 PUBLISHER = {Academic Press, Inc., Orlando, FL},
      YEAR = {1984},
     PAGES = {xiv+362},
      ISBN = {0-12-170640-0},
   MRCLASS = {58G25 (35P99 53C20)},
  MRNUMBER = {768584},
MRREVIEWER = {G\'erard\ Besson},
}

@incollection {LiSe,
    AUTHOR = {Liskevich, V. A. and Semenov, Yu.\ A.},
     TITLE = {Some problems on {M}arkov semigroups},
 BOOKTITLE = {Schr\"odinger operators, {M}arkov semigroups, wavelet
              analysis, operator algebras},
    SERIES = {Math. Top.},
    VOLUME = {11},
     PAGES = {163--217},
 PUBLISHER = {Akademie Verlag, Berlin},
      YEAR = {1996},
      ISBN = {3-05-501710-2},
   MRCLASS = {47D07 (47A55 47B38)},
  MRNUMBER = {1409835},
MRREVIEWER = {Peter\ Stollmann},
}

\end{document}